\newcommand*{\toccontents}{\@starttoc{toc}}
\crefname{equation}{}{}
\numberwithin{equation}{section}
\theoremstyle{plain}
\newtheorem{theorem}{Theorem}[section]
\newtheorem{lemma}[theorem]{Lemma}
\newtheorem{corollary}[theorem]{Corollary}
\newtheorem{proposition}[theorem]{Proposition}
\theoremstyle{definition}
\newtheorem{definition}[theorem]{Definition}
\theoremstyle{remark}
\newtheorem{remark}[theorem]{Remark}
\newtheorem{example}[theorem]{Example}
\renewcommand{\leq}{\leqslant}
\renewcommand{\geq}{\geqslant}
\renewcommand{\omega}{\omegaup}
\newcommand{\diff}[1]{\mathrm{d}#1}
\renewcommand{\longrightarrow}{\to}
\begin{document}\linespread{1.05}\selectfont
	\date{}
	
	\author{Kai Koike\,\protect\footnote{Department of Mathematics, Institute of Science Tokyo, 2-12-1, Ookayama, Meguro-ku, Tokyo 152-8551, Japan, e-mail: \href{mailto:koike.k@math.titech.ac.jp}{koike.k@math.titech.ac.jp}}\and Vahagn~Nersesyan\,\protect\footnote{NYU-ECNU Institute of Mathematical Sciences at NYU Shanghai, 3663 Zhongshan Road North, Shanghai, 200062, China, e-mail: \href{mailto:vahagn.nersesyan@nyu.edu}{Vahagn.Nersesyan@nyu.edu}}\and
		Manuel~Rissel\,\protect\footnote{NYU-ECNU Institute of Mathematical Sciences at NYU Shanghai, 3663 Zhongshan Road North, Shanghai, 200062, China, e-mail: \href{mailto:Manuel.Rissel@nyu.edu}{Manuel.Rissel@nyu.edu}}\and
		Marius~Tucsnak\,\protect\footnote{Institut de Math\'ematiques de Bordeaux, UMR 5251, Universit\'{e} de Bordeaux/Bordeaux INP/CNRS, 351 Cours de la Lib\'eration - F 33 405 TALENCE, France,
			and Institut Universitaire de France (IUF), e-mail: \href{mailto:Marius.Tucsnak@u-bordeaux.fr}{Marius.Tucsnak@u-bordeaux.fr}}}
	
	\title{Relaxation enhancement by \\ controlling incompressible fluid flows 
	}
	
	\maketitle

	\begin{abstract}
		We propose a PDE-controllability based approach to the enhancement of diffusive mixing for passive scalar fields. Unlike in the existing literature, our relaxation enhancing fields are not prescribed
		{\it ab initio} at every time and at every point of the spatial domain. Instead, we prove that time-dependent relaxation enhancing vector fields can be obtained
		as {\it state trajectories of control systems described by the  incompressible Euler equations} either driven by finite-dimensional controls or by controls localized in space. The main ingredient of our proof is a new approximate controllability theorem for the incompressible Euler equations on~$\mathbb{T}^2$, ensuring the approximate tracking of the full state all over the considered time interval. Combining this with a continuous dependence result  yields enhanced relaxation for the passive scalar field. Another essential tool in our analysis is the exact controllability of the incompressible Euler system driven by spatially localized forces.
	\end{abstract}
	
	\begin{center}
		\textbf{Keywords} \\ Relaxation enhancement, incompressible Euler equations, approximate tracking controllability, finite-dimensional controls, spatially localized controls
		
		\vspace{8pt}
		
		\textbf{MSC2020} \\ 35Q35, 76F25, 93B05
	\end{center}
	
	\newpage
	
	\setcounter{tocdepth}{2}
	\toccontents \vspace{4pt}
	
	\section{Introduction}\label{section:introduction}
	The objective of this work is to obtain relaxation enhancing fields as state trajectories of a system describing a controlled incompressible flow. Relaxation enhancing flows, both stationary and time-periodic, have been fully characterized by Constantin et al. and Kiselev et al.~in \cites{constantin2008diffusion,KiselevShterenbergZlatos2008}, using spectral conditions. However, these flows generally fail to satisfy fluid equations such as  the incompressible Euler or Navier--Stokes systems. Here, we offer an extended notion of relaxation enhancing flows that solve actual fluid PDEs driven by forces (controls) localized either in frequency or in the physical space. To describe the general context of our results, we consider, as in \cites{constantin2008diffusion,KiselevShterenbergZlatos2008}, the passive scalar equation of diffusion--advection type
	\begin{align}
		\frac{\partial \phi_a}{\partial t} (t,x) + a\tilde v(x) \cdot \nabla\phi_a (t,x)  - \Delta \phi_a  (t,x) = 0 && (t\geqslant 0,x\in M), \label{prima_advectie} \\
		\phi_a(0,x) = f (x) && (x\in M), \label{CI_prima_prima}
	\end{align}
	where $M$ is a smooth compact $d$-dimensional Riemannian manifold without boundary, $\Delta$ is the Laplace--Beltrami operator on $M$, $a$ is a positive constant, and $\tilde v$ is a time-independent sufficiently regular divergence-free vector field. 
	
	Under the above assumptions, the average $\overline{\phi_a}$ of $\phi_a$ on $M$ is constant with respect to time, and the $L^2(M)$ norm of $\phi_a(t,\cdot)-\overline{\phi_a}$ decays exponentially when $t\to \infty$. More precisely, we have the decay estimate
	\begin{align}\label{to_be_improved}
		\|\phi_a(t,\cdot)-\overline{\phi_a}\|_{L^2(M)}\leqslant \exp(-\lambda t) \|f-\overline f\|_{L^2(M)} && (a\geqslant 0,t\geqslant 0),
	\end{align}
	where $\lambda$ is the smallest positive eigenvalue of the operator $-\Delta$. 
	For $a=0$ the decay estimate \eqref{to_be_improved} is clearly sharp.
	Roughly speaking, the divergence-free vector field $\tilde v$ is said to be relaxation enhancing if the decay
	of the $L^2$ norm of $\phi_a$ becomes arbitrarily large when $a\to \infty$.
	We borrow from~\cite{constantin2008diffusion} the following   definition of this concept (see also the review by Coti Zelati et al.~\cite{ZelatiCrippaIyerMazzucato2024} and the references therein for further information):
	
	\begin{definition}\label{def_enhancement}
		A divergence-free vector field $\tilde v$ on $M$ is called  {\em relaxation enhancing} if for every $\tau,\delta>0$ there exists $a^*(\tau,\delta)>0$ such that for any $a\geqslant a^*(\tau,\delta)$ and every $f\in L^2(M)$, with $\|f\|_{L^2(M)}\leqslant 1$ and $\int_M f(x)\, \diff{x}=0$, the solution $\phi_a$ of \eqref{prima_advectie} and \eqref{CI_prima_prima} satisfies
		\begin{equation*}
			\|\phi_a (\tau,\cdot)\|_{L^2(M)}<\delta.
		\end{equation*}
	\end{definition}
	
	Let us mention that the above defined notion is strongly related to the concept of mixing vector fields (for instance, see \cite{ZelatiDelgadinoElgindi2020}).
	
	The main result of Constantin et al.~\cite{constantin2008diffusion} gives a necessary and sufficient condition for a Lipschitz field $\tilde v$ to be relaxation enhancing. The Lipschitz property assumption
	has been weakened to continuity by Wei~\cite{wei2021diffusion}, yielding the following~result:
	
	\begin{theorem}\label{th_wei_1}
		A continuous divergence-free field $\tilde v$ on $M$ is relaxation enhancing if and only if the operator $\varphi\mapsto \tilde v\cdot \nabla \varphi$ has no  eigenvectors in the Sobolev space $H^1(M)$, other than constant functions.
	\end{theorem}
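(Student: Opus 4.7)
The plan is to reformulate the problem as a decay estimate for the semigroup generated by $H_a := -\Delta + a\Gamma$ on the mean-zero subspace $L^2_0(M)$, where $\Gamma\varphi := \tilde v\cdot\nabla\varphi$ is skew-symmetric by the divergence-free condition, and to prove the two implications separately. The basic energy identity
\begin{equation*}
\frac{d}{dt}\|\phi_a(t,\cdot)\|^2_{L^2(M)} = -2\|\nabla \phi_a(t,\cdot)\|^2_{L^2(M)}
\end{equation*}
reveals that dissipation is produced solely by the Laplacian; the role of the advection is to transport mass into high-frequency modes where diffusion acts more strongly.

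For the \emph{necessity} direction, I would argue by contrapositive. Suppose $\Gamma\varphi = i\mu\varphi$ for some real $\mu$ and non-constant $\varphi\in H^1(M)$ normalised so that $\int_M\varphi\,\diff{x}=0$ and $\|\varphi\|_{L^2(M)}=1$, and take $f := \varphi$. A short computation combining the PDE, integration by parts and the skew-symmetry of $\Gamma$ gives
\begin{equation*}
\frac{d}{dt}\bigl(e^{ia\mu t}\langle \phi_a(t,\cdot),\varphi\rangle\bigr) = -e^{ia\mu t}\langle\nabla\phi_a(t,\cdot),\nabla\varphi\rangle.
\end{equation*}
Integrating in time, applying Cauchy--Schwarz, and using the energy identity yields $\|\phi_a(t,\cdot)\|_{L^2(M)}\geqslant 1 - \sqrt{t/2}\,\|\nabla\varphi\|_{L^2(M)}$, \emph{uniformly in $a$}, so relaxation enhancement fails for $\tau$ and $\delta$ small enough.

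For the \emph{sufficiency} direction, I would argue by contradiction: assume the existence of $\tau,\delta>0$, a sequence $a_n\to\infty$, and initial data $f_n\in L^2_0(M)$ with $\|f_n\|_{L^2(M)}=1$ such that $\|\phi_{a_n}(\tau,\cdot)\|_{L^2(M)}\geqslant\delta$. The energy identity then bounds $\int_0^\tau\|\nabla\phi_{a_n}(s,\cdot)\|^2_{L^2(M)}\,\diff{s}$ uniformly, so the compact embedding $H^1\hookrightarrow L^2$ extracts times $s_n\in[0,\tau]$ and a limit $\bar\phi\in L^2_0(M)$ with $\|\bar\phi\|_{L^2(M)}\geqslant\delta$. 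On a short interval $[s_n,s_n+h]$, the solution $\phi_{a_n}$ remains close to the pure advection trajectory $e^{-a_n(t-s_n)\Gamma}\bar\phi$. Applying the RAGE theorem to the skew-adjoint operator $\Gamma$, and using the hypothesis of no non-constant $H^1$-eigenfunctions together with the compactness $H^1\hookrightarrow L^2$ to rule out the relevant $L^2$-eigenfunctions appearing in the limit, one obtains
\begin{equation*}
\frac{1}{T}\int_0^T\|P_N e^{-is\Gamma}\bar\phi\|^2_{L^2(M)}\,\diff{s}\longrightarrow 0 \quad\text{as}\quad T\to\infty,
\end{equation*}
where $P_N$ is the spectral projector onto the first $N$ eigenmodes of $-\Delta$. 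Choosing $N$ large and $a_n$ correspondingly large forces $\|\nabla\phi_{a_n}\|_{L^2(M)}$ to be large on a set of positive measure within $[s_n,s_n+h]$, contradicting the uniform bound on $\int_0^\tau\|\nabla\phi_{a_n}\|^2_{L^2(M)}\,\diff{s}$.

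The \emph{main obstacle} is bridging the $H^1$-spectral hypothesis of the theorem with the $L^2$-based RAGE theorem. In the Lipschitz setting of Constantin et al.~\cite{constantin2008diffusion}, transport theory upgrades every $L^2$-eigenfunction of $\Gamma$ to higher regularity, so the spectral condition can be imposed directly in $L^2$; here, with $\tilde v$ only continuous, one must use the compact embedding $H^1\hookrightarrow L^2$ to exclude $L^2$-eigenfunctions of $\Gamma$ that arise as limits of bounded $H^1$-sequences, which is the key refinement in Wei~\cite{wei2021diffusion}. Making the perturbative comparison with the pure advection flow rigorous, and propagating it uniformly as $a\to\infty$, is the other delicate technical point.
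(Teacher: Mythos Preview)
The paper does not contain a proof of this statement. Theorem~\ref{th_wei_1} is quoted as a known result from the literature: the Lipschitz case is attributed to Constantin et al.~\cite{constantin2008diffusion}, and the continuous case to Wei~\cite{wei2021diffusion}. It is used only as background motivation in the introduction and plays no role in the arguments of the paper, so there is nothing in the paper to compare your proposal against.

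That said, your outline is a faithful sketch of the Constantin--Kiselev--Ryzhik--Zlato\v{s}/Wei strategy. The necessity direction is essentially complete as written. The sufficiency direction captures the right ingredients (energy identity, RAGE theorem, compactness of $H^1\hookrightarrow L^2$), but the sentence ``the solution $\phi_{a_n}$ remains close to the pure advection trajectory $e^{-a_n(t-s_n)\Gamma}\bar\phi$'' hides most of the work: making this comparison precise uniformly in $a_n\to\infty$, and then extracting a quantitative lower bound on $\int\|\nabla\phi_{a_n}\|^2$ from the RAGE-type smallness of the low-mode projection, is where the actual analysis lies. Your final paragraph correctly identifies the $H^1$-versus-$L^2$ spectral mismatch as the place where Wei's argument diverges from the original, though the resolution is not quite that ``$L^2$-eigenfunctions arising as limits of bounded $H^1$-sequences'' are excluded; rather, Wei reorganises the contradiction so that only the $H^1$ hypothesis is needed. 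If you intend this as a genuine proof rather than a summary, those two points would need to be filled in.
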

	
	The main novelty brought in by the present work is that, instead of directly imposing
	a relaxation enhancing field $\tilde{v}$ in \eqref{prima_advectie}, 
	we obtain a (time-dependent) relaxation enhancing field as state trajectory of a system describing a controlled fluid flow on $M$. This means, in particular, that enhanced relaxation is achieved either by acting on a finite number of scalar control functions or by a control force localized in an  open strict subset of $M$.
	More precisely, we consider the case in which $M$ is the two dimensional flat torus $\mathbb{T}^2=\mathbb{R}^2/2\pi \mathbb{Z}^2$ and the
	control system is described by the incompressible Euler  equations on $\mathbb{T}^2$, i.e.,
	\begin{align}
		\frac{\partial v}{\partial t}+(v\cdot\nabla)v+
		\nabla p=h+Bu && \text{in $(0,\infty) \times \mathbb{T}^2$}, \label{prima_curgere}\\
		\operatorname{div}{v}=0 && \text{in $(0,\infty) \times \mathbb{T}^2$}, \label{divergenta_zero} \\
		v(0, \cdot) = v_0 && \text{on $\mathbb{T}^2$}, \label{CI_prima_curgere}
	\end{align}
	where $v_0$ is a divergence-free vector field and $h$ is a given function (both smooth enough in a sense made precise later).
	Here, $B\in \mathcal{L}(U,L^2(\mathbb{T}^2;\mathbb{R}^2))$ is the {\em control~operator}, $U$ is a Hilbert space, and $u\in L^2_{loc}((0,\infty);U)$ is the {\em control~function}.
	
	As far as we know, this approach has not yet been explored in the literature in the context of enhanced dissipation.
	However, the idea of replacing an apriori given divergence-free vector field $\tilde v$ in \eqref{prima_advectie} by a velocity field obtained as a state trajectory of a controlled incompressible flow already appeared in the related 
	context of mixing problems; see Hu~\cite{hu2020approximating} or  Hu et al.~\cite{hu2023feedback}. These works consider optimal control problems in which the cost function involves a negative Sobolev norm of the final state of an advection equation (with no diffusion term) and the
	advection velocity field is obtained by controlling a fluid flow described by the incompressible time-dependent Stokes or Navier--Stokes equations. Note that the advection velocity fields obtained by minimizing the cost functions in \cite{hu2020approximating} or \cite{hu2023feedback} have apriori no reason to be relaxation enhancing.
	
	In addition, we would like to mention the work by Bedrossian et al.~\cite{Stochastic21}. In their work, instead of control theoretic methods, they considered a probabilistic approach. More precisely, they obtained (in an almost sure sense) relaxation enhancing divergence-free vector fields as solutions to the 2D Navier--Stokes equations driven by a regular-in-space and non-degenerate white-in-time noise.
	The particularity of our work is that we construct relaxation enhancing fields by means of {\em finite-dimensional} or {\em spatially localized} deterministic controls acting on the 2D Euler equations. Here, finite-dimensional controls means that the control space is $U=\mathbb{R}^m$, with $m\in \mathbb{N}$ (as small as possible) and that the control operator $B$ in \eqref{prima_curgere} is given by
	\begin{equation}\label{first_choice}
		(B{\rm u})(x) =\sum_{j=1}^m {\rm u_j} \theta_j(x) \qquad \left({\rm u}=
		[{\rm u_1}, \dots, {\rm u}_m]^{\top}\in \mathbb{R}^m, x\in \mathbb{T}^2 \right),
	\end{equation}
	where $\{\theta_j\}_{1\leqslant j\leqslant m}$ are fixed (appropriately chosen) functions. On the other hand, spatially localized controls means that the support in space of $Bu$ is contained in some proper subset of $\mathbb{T}^2$; see Theorem~\ref{th_con_local}  below fore more details.
	
	Assuming that \eqref{prima_curgere}--\eqref{CI_prima_curgere} admits a solution $v$, we consider the associated diffusion--advection system
	\begin{align}
		\frac{\partial \varphi_v}{\partial t} (t,x) +  v(t,x) \cdot \nabla\varphi_v (t,x)  - \Delta \varphi_v  (t,x) = 0 && (t\geqslant 0,x\in \mathbb{T}^2), \label{a_doua_advectie} \\
		\varphi_v(0,x) = f (x) && (x\in \mathbb{T}^2). \label{CI_prima}
	\end{align}
	
	By analogy with  Definition \ref{def_enhancement}, we set
	
	\begin{definition}\label{def_enhancement_bis}
		Let $v_0=h=0$. The control system \eqref{prima_curgere}--\eqref{CI_prima_curgere}  is said to be {\em relaxation enhancing} if for every $\tau,\delta>0$, there exists
		$u\in L^2([0,\tau];U)$ such that the solution $\varphi_v$ of the system \eqref{a_doua_advectie} and \eqref{CI_prima}, where $v$ is the solution of~\eqref{prima_curgere}--\eqref{CI_prima_curgere}, satisfies
		\begin{equation*}
			\|\varphi_v (\tau,\cdot)\|_{L^2(\mathbb{T}^2)}<\delta
		\end{equation*}
		for every $f\in L^2(\mathbb{T}^2)$ with $\| f \|_{L^2(\mathbb{T}^2)}\leqslant 1$ and $\int_{\mathbb{T}^2} f(x)\, \diff{x}=0$.
	\end{definition}
	
	\begin{remark}
		The simplifying assumption $v_0=h=0$ in the definition above will be removed in our main result; see Theorem~\ref{th_rel_new}.
	\end{remark}
	
	In other words, a relaxation enhancing control system can produce a (time-dependent)
	relaxation enhancing velocity field by means of a small number of control functions
	acting as the external force of the incompressible Euler equations.
	
	The precise statements of our main results require some preparation, so we postpone them to 
	Theorem \ref{th_rel_new} and Theorem \ref{th_con_local} in Section \ref{sec_prel}. However, we can already state here a consequence
	of Theorem \ref{th_rel_new}.
	
	\begin{corollary}\label{cor_first}
		Let $m=4$ and
		\begin{alignat*}{2}
			\theta_1(x) & = [0,1]^{\top}\sin (x_1), \quad & \theta_2(x) & = [0,1]^{\top}\cos (x_1), \\
			\theta_3(x) & = [1,1]^{\top}\sin (x_1-x_2), \quad & \theta_4(x) & = [1,1]^{\top}\cos (x_1-x_2).
		\end{alignat*}
		Then the control system \eqref{prima_curgere}--\eqref{CI_prima_curgere}, with $B$ given by \eqref{first_choice}, is relaxation enhancing.
	\end{corollary}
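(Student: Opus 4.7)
The plan is to derive Corollary~\ref{cor_first} directly from our main result, Theorem~\ref{th_rel_new}, by verifying that the four vector fields $\{\theta_j\}_{j=1}^{4}$ above satisfy the hypotheses imposed there on the control operator $B$. Following the Agrachev--Sarychev paradigm for finite-dimensional control of incompressible Euler flows on $\mathbb{T}^2$, which underlies Theorem~\ref{th_rel_new}, this will reduce to a purely algebraic \emph{saturation} condition at the level of Fourier modes: starting from the linear span of $\theta_1,\ldots,\theta_4$, the successive closure under the Euler bilinear interaction $(u,v)\mapsto (u\cdot\nabla)v+(v\cdot\nabla)u$ must exhaust the divergence-free, zero-mean subspace of $L^{2}(\mathbb{T}^2;\mathbb{R}^2)$.

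First, I would translate the definition of the $\theta_j$ into Fourier language. The fields $\theta_1,\theta_2$ span the real divergence-free subspace attached to the pair of wavenumbers $\pm(1,0)$, the direction $(0,1)$ being orthogonal to $(1,0)$, while $\theta_3,\theta_4$ span the analogous subspace attached to $\pm(1,-1)$, with divergence-free direction $(1,1)$. The initial saturating set of wavenumbers is therefore
\begin{equation*}
    \mathcal{K}_0 = \bigl\{\pm(1,0),\,\pm(1,-1)\bigr\},
\end{equation*}
and the span of the $\theta_j$ exhausts the real divergence-free subspace over $\mathcal{K}_0$.

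Next, I would verify saturation in the Agrachev--Sarychev sense. For two divergence-free modes at wavenumbers $k_1,k_2$, the Euler bilinear interaction produces modes at $k_1\pm k_2$ with a coefficient proportional to $k_1\wedge k_2$, which is nonzero as long as $k_1$ and $k_2$ are not collinear. Since $(1,0)$ and $(1,-1)$ are linearly independent, one step of the procedure already yields the new wavenumbers $(0,1),(0,-1),(2,-1),(-2,1)$. The pair $(1,0),(0,1)$ now generates $\mathbb{Z}^{2}$ additively, and a straightforward induction shows that iteration of the procedure covers $\mathbb{Z}^{2}\setminus\{0\}$, producing at each new wavenumber a pair of real divergence-free modes that spans the corresponding two-dimensional subspace.

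The main content of the argument is this saturation bookkeeping: at each inductive step one must ensure that the pair of wavenumbers being combined is not collinear, so that the bilinear coefficient does not vanish and no divergence-free mode is missed. This check is purely combinatorial on $\mathbb{Z}^{2}$ and follows the pattern established in the Agrachev--Sarychev-type analyses of the 2D Euler and Navier--Stokes systems that underpin Theorem~\ref{th_rel_new}; the hard analytic work -- approximate tracking controllability of \eqref{prima_curgere}--\eqref{CI_prima_curgere} and its coupling to \eqref{a_doua_advectie}--\eqref{CI_prima} -- has already been absorbed into that theorem. Once saturation is in hand, Theorem~\ref{th_rel_new} applies directly and yields the relaxation enhancement asserted in Corollary~\ref{cor_first}.
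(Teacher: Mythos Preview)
Your overall strategy---verify that $E=\operatorname{span}\{\theta_1,\dots,\theta_4\}$ is saturating and then invoke Theorem~\ref{th_rel_new}---is exactly what the paper does. The difference is only in how saturation is checked, and here your direct computation contains a genuine gap.

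You assert that the Leray-projected symmetrized Euler interaction of two divergence-free modes at wavenumbers $k_1,k_2$ produces a mode at $k_1\pm k_2$ with coefficient proportional to $k_1\wedge k_2$, hence nonzero whenever $k_1,k_2$ are non-collinear. This is not correct: after Leray projection the coefficient is proportional to $(k_1\wedge k_2)\bigl(|k_1|^2-|k_2|^2\bigr)$, so it \emph{also} vanishes whenever $|k_1|=|k_2|$. Concretely, for $u_{k}=k^{\perp}e^{ik\cdot x}$ one has $\Pi\bigl[(u_{k_1}\!\cdot\!\nabla)u_{k_2}+(u_{k_2}\!\cdot\!\nabla)u_{k_1}\bigr]
= i(k_1^{\perp}\!\cdot k_2)\,\dfrac{|k_2|^2-|k_1|^2}{|k_1+k_2|^2}\,(k_1+k_2)^{\perp}e^{i(k_1+k_2)\cdot x}$. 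Your induction then breaks precisely at the step you highlight: after one iteration you obtain $(0,1)$, and you propose to continue using the pair $(1,0),(0,1)$---but these have equal length, so the bilinear form gives zero and no new modes are produced from that pair. The ``purely combinatorial'' bookkeeping you describe therefore requires, at each step, not only non-collinearity but also unequal lengths, and you have not arranged this.

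The paper sidesteps the issue by quoting the Hairer--Mattingly characterization (Lemma~\ref{lem:HM}): $E(\mathcal{K})$ is saturating if and only if $\mathcal{K}$ generates $\mathbb{Z}^2$ \emph{and} contains two non-parallel vectors of different lengths. For $\mathcal{K}=\{(1,0),(1,-1)\}$ both conditions are immediate ($|(1,0)|=1\neq\sqrt{2}=|(1,-1)|$), and saturation follows. Either cite this lemma, or repair your induction so that at every stage you combine modes of distinct lengths; with only the non-collinearity check your argument is incomplete.
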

	
	The remaining part of this article is organized as follows. In \Cref{sec_prel}, we provide precise statements of our main results and briefly describe the strategy of the proofs. In Section \ref{sec_robust}, we show that the mapping that sends a divergence-free advection field to the solution of the corresponding advection--diffusion system is H\"older continuous from the space of advection velocities  endowed with the relaxation norm~\eqref{def_norm_relax}
	to standard function spaces for the solutions of the diffusion--advection system. \Cref{extremely_difficult} is exclusively devoted to the proof of approximate tracking controllability for the Euler system driven by a frequency-localized force.
	In Section~\ref{sec_exact_Euler}, we give a short proof for the exact controllability of the incompressible Euler equations on $\mathbb{T}^2$ with controls supported in an arbitrary open set $\omegaup \subset \mathbb{T}^2$ for which $\mathbb{T}^2 \setminus \omegaup$ is simply-connected.
	Finally, in Section \ref{sec_proofs_main}, we complete the proofs of our main results.
	
	
	\section{Main results}\label{sec_prel}
	
	In this section, we introduce some notation and preliminaries used throughout this paper. After this preparation, we state our main results in Theorems~\ref{th_rel_new} and~\ref{th_con_local}.
	
	For $d\in \mathbb{N}$ with $d\geqslant 2$, the standard inner product of two vectors $v,w\in \mathbb{R}^d$ is denoted by $v\cdot w$, while $\mathbb{T}^d$  stands for the standard $d$-dimensional torus~$\mathbb{R}^d/2\pi\mathbb{Z}^d$.
	Moreover, for every $k\in \mathbb{N}\cup\{0\}$ we denote by $H^k$ the Sobolev space of zero average scalar functions
	\begin{equation}\label{defhk_nou}
		H^k \coloneqq \left\{\varphi\in H^k(\mathbb{T}^d) \,:\, \int_{\mathbb{T}^d} \varphi(x)\, \diff{x}=0\right\}.
	\end{equation}
	The space $H^0$ is endowed with the standard $L^2$ inner product and induced norm.
	
	Let $-A_0$ be the Laplacian on $H^0$, which means that $A_0 \colon \mathcal{D}(A_0)\to H^0$ with
	\begin{equation}\label{def_dom_A0}
		\mathcal{D}(A_0)=H^2,
	\end{equation}
	\begin{equation}\label{def_op_A0}
		A_0 g= -\Delta g \qquad (g\in \mathcal{D}(A_0)).
	\end{equation}
	The result below gathers, for later use, some well-known properties of $A_0$.
	
	\begin{proposition}\label{prop_A0}
		Let $A_0$ be the operator defined in \eqref{def_dom_A0} and \eqref{def_op_A0}. Then
		\begin{enumerate}
			\item[\rm (i)] $A_0$ is strictly positive on $H^0$, which means that $A_0$ is self-adjoint 
			and there exists $c_0>0$ such that
			$$
			\langle A_0 g,g\rangle_{L^2(\mathbb{T}^d)} \geqslant c_0 \|g\|_{L^2(\mathbb{T}^d)}^2 \qquad (g\in \mathcal{D}(A_0)).
			$$
			\item[\rm (ii)] For every $m\in \mathbb{N}$ we have that $\mathcal{D}(A_0^m)=H^{2m}$. Moreover, for every $k\in \mathbb{N}$,
			the standard inner product in $H^k$ is equivalent to the inner product $\langle\cdot,\cdot\rangle_k$ defined by
			\begin{equation}\label{def_scal_l}
				\langle g_1,g_2\rangle_k =\langle A_0^{\frac{k}{2}} g_1, A_0^{\frac{k}{2}} g_2\rangle_{L^2(\mathbb{T}^d)} \qquad (g_1,g_2\in H^k).
			\end{equation}
			\item[\rm (iii)] For every $k\in \mathbb{N}$ the part of $A_0$ in $H^k$ defines a strictly positive operator on $H^k$, with domain $H^{k+2}$.
		\end{enumerate}
	\end{proposition}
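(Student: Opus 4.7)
The plan is to carry out everything via Fourier series on $\mathbb{T}^d$. For $g \in H^0$, write $g = \sum_{k \in \mathbb{Z}^d \setminus \{0\}} \hat g(k) e^{i k \cdot x}$; the zero-average condition exactly removes the $k = 0$ mode. Under this representation, $-\Delta$ is unitarily equivalent to the multiplication operator $\hat g(k) \mapsto |k|^2 \hat g(k)$ on the corresponding weighted $\ell^2$ space, whose natural maximal domain inside $L^2$ coincides with $H^2$. This immediately gives self-adjointness of $A_0$, and the positivity bound in (i) follows from $|k|^2 \geqslant 1$ on $\mathbb{Z}^d \setminus \{0\}$, yielding $\langle A_0 g, g\rangle_{L^2} \geqslant \|g\|_{L^2}^2$ via Parseval, so (i) holds with $c_0 = 1$.

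For (ii), the spectral representation gives $\mathcal{D}(A_0^m) = \{ g \in H^0 \,:\, \sum_k |k|^{4m} |\hat g(k)|^2 < \infty \}$, which matches the Fourier characterization of $H^{2m}$ on the torus restricted to zero-average functions. The equivalence of the two inner products on $H^k$ reduces to comparing the weight $|k|^{2k}$ with the standard Sobolev weight $(1 + |k|^2)^k$: on $\mathbb{Z}^d \setminus \{0\}$ one has $1 \leqslant 1 + |k|^2 \leqslant 2|k|^2$, so raising to the $k$-th power and summing termwise with Parseval yields matching upper and lower bounds.

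For (iii), the "part of $A_0$ in $H^k$" is, by definition, the restriction of $A_0$ to $\{ g \in H^k \cap \mathcal{D}(A_0) \,:\, A_0 g \in H^k\}$. Applying the Fourier characterization of $H^k$ once more, $-\Delta g \in H^k$ combined with $g \in H^k$ is equivalent to $g \in H^{k+2}$, giving the stated domain. Self-adjointness and strict positivity with respect to the equivalent inner product $\langle \cdot, \cdot\rangle_k$ then transfer verbatim, since $-\Delta$ commutes with every fractional power $A_0^{k/2}$ (they are simultaneously diagonalized by the Fourier basis), which conjugates the statement on $H^k$ back to the already proved statement on $H^0$.

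Since the entire proposition is a textbook consequence of Fourier analysis on the torus, no serious obstacle is expected. The most delicate bookkeeping is verifying the equivalence of the two inner products on $H^k$ and identifying the correct domain for the part of $A_0$ in $H^k$, both of which reduce to the elementary two-sided estimate $1 \leqslant 1 + |k|^2 \leqslant 2|k|^2$ applied modewise.
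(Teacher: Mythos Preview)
Your Fourier-series argument is correct and is exactly the standard route one has in mind for this proposition; the paper itself does not supply a proof but simply presents the result as a collection of well-known facts about the Laplacian on~$\mathbb{T}^d$. Nothing further is needed.
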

	
	Next, for $d,k\in \mathbb{N}$ we denote by  $H^k(\mathbb{T}^d; \mathbb{R}^d)$ the space of vector functions $v = [v_1, \dots, v_d]^\top$ with components belonging to the Sobolev space $H^k$ defined in~\eqref{defhk_nou}. For the remaining part of this work,
	the standard $H^k$ norm is denoted by~$\|\cdot\|_k$. 
	We also set
	\begin{equation}\label{numarh}
		H^k_\sigma \coloneqq H^k_\sigma(\mathbb{T}^d;\mathbb{R}^d) \coloneqq
		H^k(\mathbb{T}^d; \mathbb{R}^d)\cap \mathcal{H},
	\end{equation}
	where~$\mathcal{H}$ is given by 
	\begin{equation}\label{DEFH}
		\mathcal{H} \coloneqq \left\{v\in L^2(\mathbb{T}^d; \mathbb{R}^d)\,:\, \operatorname{div}{v}=0,\,\, \int_{\mathbb{T}^d}v(x)\, \diff{x}=0\right\}.
	\end{equation}
	Note that $H^k_\sigma$ is a Hilbert space when endowed with the norm
	\begin{equation*}
		\|v\|_{H^k_\sigma}^2=\sum_{j=1}^d \|v_j\|_k^2.
	\end{equation*}
	
	We denote by  $\Pi$  the orthogonal projection  from $L^2(\mathbb{T}^d;\mathbb{R}^d)$ onto $\mathcal{H}$, where the space $\mathcal{H}$ has been defined in \eqref{DEFH}.
	Moreover, for $k > \frac{d}{2}$  we define
	\begin{equation}\label{nonlinear}
		N(v) =\Pi \left[(v\cdot \nabla) v\right] \qquad (v\in H^k(\mathbb{T}^d; \mathbb{R}^d)).
	\end{equation}
	We next recall some notation from Agrachev and Sarychev~\cites{AS-2006} and Shirikyan~\cite{shirikyan-cmp2006}. For any finite-dimensional subspace
	$E\subset H_\sigma^{k+2}$, we denote by $\mathcal{F}(E)$ the largest
	vector space $F\subset H^{k+2}_\sigma$ such that any $\eta_1\in F$ can be represented in the form
	$$
	\eta_1=\eta-\sum_{i=1}^{ p} N(\zeta^i)
	$$ 
	for some $p\in \mathbb{N}$ and vectors $\eta, \zeta^1,\ldots ,\zeta^p\in E$. As the space $E$ is finite-dimensional and $N$ is quadratic, it is easy
	to see that~$\mathcal{F}(E)$ is well-defined. A non-decreasing sequence of subspaces $\{E_j\}_{j\in \mathbb{N}\cup \{ 0,\infty \}}$ is defined by recurrence as follows:
	\begin{equation}\label{spatii_stranii}
		E_0=E,\quad E_j=\mathcal{F}(E_{j-1})\quad \textrm{for $j \in \mathbb{N}$},\quad E_\infty=\bigcup_{j=1}^\infty E_j. 
	\end{equation}

	\begin{definition}
		The space $E$ is said to be {\it saturating} if $E_\infty$ is dense in $H^{k}_\sigma$.
	\end{definition}

	\begin{remark}
		We note that for the saturating space $E$ given in Example~\ref{example:saturating} (cf.~Lemma~\ref{lem:HM}), the set $E_{\infty}$ is actually dense in $H_{\sigma}^{l}$ for all $l\in \mathbb{N}$.
	\end{remark}
	
	We are now in a position to state the first main result of this paper:
	
	\begin{theorem}\label{th_rel_new}
			Let $v_0 \in H_{\sigma}^{4}$ and $h\in L_{loc}^{1}([0,\infty);H_{\sigma}^{4})$. Assume that $E$ is a finite-dimensional subspace of $H_\sigma^{6}$ and that it is saturating. Let $\{\theta_1,\dots,\theta_m\}$ be a basis of $E$ and define $B$ by \eqref{first_choice}.
		Then for every $\tau,\delta>0$, there exists $u\in C^{\infty}([0,\tau];\mathbb{R}^m)$ such that the solution $\varphi_v$ of the system \eqref{a_doua_advectie} and \eqref{CI_prima}, where $v$ is the solution of~\eqref{prima_curgere}--\eqref{CI_prima_curgere}, satisfies
		\begin{equation}\label{pana_aici_2}
			\|\varphi_v (\tau,\cdot)\|_{L^2(\mathbb{T}^2)}<\delta
		\end{equation}
		for every $f\in L^2(\mathbb{T}^2)$ with $\| f \|_{L^2(\mathbb{T}^2)}\leqslant 1$ and $\int_{\mathbb{T}^2} f(x)\, \diff{x}=0$. In particular, the control system \eqref{prima_curgere}--\eqref{CI_prima_curgere}, with $B$ given by \eqref{first_choice}, is relaxation enhancing.
\end{theorem}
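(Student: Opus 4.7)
The plan is to combine three ingredients that are already in place by the end of the excerpt: Wei's characterization of relaxation enhancing vector fields (Theorem~\ref{th_wei_1}), the approximate tracking controllability of the Euler system announced for Section~\ref{extremely_difficult}, and the Hölder continuous dependence of the advection--diffusion solution on the advection field proved in Section~\ref{sec_robust}.

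First, I would fix a time-independent, smooth, divergence-free, mean-zero reference field $\tilde v$ on $\mathbb{T}^2$ for which the operator $\varphi\mapsto \tilde v\cdot\nabla\varphi$ has no non-constant $H^1$ eigenvectors; such fields exist and are easy to construct explicitly (e.g., a generic trigonometric shear-type flow). By Theorem~\ref{th_wei_1} this $\tilde v$ is relaxation enhancing, so applying Definition~\ref{def_enhancement} at the given $\tau$ and with tolerance $\delta/3$ yields $a^\ast>0$ such that, for any $a\geqslant a^\ast$, every solution $\phi_a$ of \eqref{prima_advectie}--\eqref{CI_prima_prima} with advection field $a\tilde v$ satisfies $\|\phi_a(\tau,\cdot)\|_{L^2(\mathbb{T}^2)}<\delta/3$ uniformly over mean-zero initial data $f$ with $\|f\|_{L^2}\leqslant 1$. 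Fix one such $a$.

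Next, I would introduce a smooth target trajectory $v^{\star}\colon[0,\tau]\to H_\sigma^{4}$ that interpolates from $v_0$ at $t=0$ to the constant value $a\tilde v$ on a short initial interval $[0,\eta]$, equals $a\tilde v$ on $[\eta,\tau]$, and ignores the drift contribution of $h$ on that layer. A standard energy estimate for \eqref{a_doua_advectie}--\eqref{CI_prima} shows that shrinking $\eta$ makes the difference between the terminal states of advection--diffusion driven respectively by $v^{\star}$ and by $a\tilde v$ no larger than $\delta/3$. With this target in hand, I would invoke the approximate tracking controllability statement of Section~\ref{extremely_difficult}: since $\{\theta_1,\dots,\theta_m\}$ is a basis of the saturating space $E\subset H_\sigma^{6}$, it produces a control $u\in C^\infty([0,\tau];\mathbb{R}^m)$ for which the Euler solution $v$ of \eqref{prima_curgere}--\eqref{CI_prima_curgere} approximates $v^{\star}$ arbitrarily well in the relaxation norm~\eqref{def_norm_relax}. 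Applying the Hölder continuity result from Section~\ref{sec_robust} then transfers this closeness to the diffusion--advection level, bounding $\|\varphi_v(\tau,\cdot)-\varphi_{v^{\star}}(\tau,\cdot)\|_{L^2(\mathbb{T}^2)}$ by $\delta/3$. Summing the three $\delta/3$-errors yields~\eqref{pana_aici_2}, and smoothness of $u$ is provided by the construction in Section~\ref{extremely_difficult}.

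The decisive difficulty is the approximate tracking step. Classical approximate controllability at the final time is insufficient here, because the advection--diffusion output at $t=\tau$ depends on the entire history of $v$ through a quadratic transport term; one truly needs the Euler trajectory to shadow $v^{\star}$ uniformly (in a time-integrated sense) over the whole interval $[0,\tau]$. This forces an Agrachev--Sarychev / Shirikyan type iteration, climbing the saturating hierarchy $\{E_j\}$ of \eqref{spatii_stranii} to generate, through the quadratic nonlinearity $N$ in~\eqref{nonlinear}, enough effective directions to approximate arbitrary smooth divergence-free targets while keeping the control finite-dimensional. A secondary technical point is that the relaxation norm in Section~\ref{sec_robust} must be weak enough to be reached by such Euler trajectories yet strong enough for Hölder continuity of the advection--diffusion map to kick in; matching these two thresholds is the glue that makes the whole argument work.
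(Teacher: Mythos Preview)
Your overall architecture is exactly the paper's: pick a smooth relaxation enhancing field $\tilde v$, choose $a$ large, build a target trajectory $v^\star$ that bridges $v_0$ to $a\tilde v$ on a short initial layer, invoke the approximate tracking controllability of Section~\ref{extremely_difficult} to make the Euler solution $v$ shadow $v^\star$, and then transfer this to the advection--diffusion level. Two points, however, are not in order.

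First, a minor slip: shear flows are \emph{not} relaxation enhancing in the sense of Definition~\ref{def_enhancement}. If $\tilde v=(\alpha(x_2),0)$, then every function of $x_2$ alone is an $H^1$ eigenvector of $\varphi\mapsto\tilde v\cdot\nabla\varphi$ with eigenvalue $0$, so Theorem~\ref{th_wei_1} fails. (This is precisely why Theorem~\ref{th_con_local} only controls a projection.) For the reference field you must cite genuinely weakly mixing examples, e.g.\ those in \cite{constantin2008diffusion}*{Section~6}, as the paper does.

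Second, and more substantively: you invoke the H\"older continuity result (Theorem~\ref{L:1}) to pass from closeness of $v$ to $v^\star$ in the relaxation norm to closeness of $\varphi_v(\tau,\cdot)$ to $\varphi_{v^\star}(\tau,\cdot)$. But Theorem~\ref{L:1}, and likewise the $\varphi$-estimate built into Theorem~\ref{T.2.1}, requires $f\in H^{k+2}$ with $k\geqslant 2$, i.e.\ $f\in H^4$, and the constant depends on a bound $R$ for $\|f\|_{4}$. The conclusion of Theorem~\ref{th_rel_new}, however, must hold uniformly over all mean-zero $f\in L^2$ with $\|f\|_{L^2}\leqslant 1$; for such $f$ the $H^4$ norm is not even finite in general. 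The paper closes this gap by approximating $f$ in $L^2$ by some $\tilde f\in H^4$ with $\|\tilde f\|_{L^2}\leqslant 1$, applying Theorems~\ref{T.2.1} and~\ref{L:1} with the regular data $\tilde f$, and then using the elementary $L^2$-contraction of the advection--diffusion semigroup (valid for any divergence-free drift) to control both $\|\varphi_v-\tilde\varphi_v\|_{L^2}$ and $\|\phi_a-\tilde\phi_a\|_{L^2}$ by $\|f-\tilde f\|_{L^2}$. Your write-up needs this extra layer; without it the appeal to Section~\ref{sec_robust} is not justified.
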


A possible choice of a space $E$ satisfying the assumptions in \Cref{th_rel_new} is given by
\Cref{example:saturating}.
Then, Corollary~\ref{cor_first} is just Theorem~\ref{th_rel_new} with this choice of~$E$.

In \Cref{th_rel_new},
the input function takes values in a finite-dimensional space $E$ and is active on all of $\mathbb{T}^2$.
An interesting question is whether one can construct relaxation enhancing control systems with inputs   which are active only on a strict subset of $\mathbb{T}^2$ (also called 
{\em spatially localized controls}).
At this stage, however, it is difficult to achieve the conclusion of Theorem~\ref{th_rel_new} with such controls. Let us explain the difficulty: to cause relaxation enhancement, we need to make sure by choosing an appropriate control input $u$ in~\eqref{prima_curgere}--\eqref{CI_prima_curgere} that the solution $v$ stays close to $a\tilde{v}$ (where $\tilde{v}$ is relaxation enhancing in the sense of Definition~\ref{def_enhancement} and $a$ is a large constant) during a time interval. However, controlling solutions to PDEs over an interval of time---as opposed to controlling at an instant of time---is in general a very difficult task. To our knowledge, no such result with spatially localized control is known for nonlinear fluid dynamical equations (see the discussion of \textit{tracking controllability} in the beginning of Section~\ref{extremely_difficult}).

Nevertheless, we give below a partial result for relaxation enhancement via distributed internal controls. Namely, we prove that it is possible to enhance the decay of a certain projection of the solution $\varphi_v$ of~\eqref{a_doua_advectie} and~\eqref{CI_prima}. More precisely, we have: 

\begin{theorem}\label{th_con_local}
	Let $\omegaup \subset \mathbb{T}^2$ be open and such that $\mathbb{T}^2\setminus \omegaup$ is simply-connected. 
	Let $B\in \mathcal{L}(L^2(\mathbb{T}^2))$ be defined by 
	\begin{equation*}
		B {\rm u} = \mathbb{I}_{\omegaup} {\rm u} \qquad ({\rm u}\in L^2(\mathbb{T}^2)),
	\end{equation*}
	where $\mathbb{I}_{\omegaup}$ is the characteristic function of $\omegaup$.
	Moreover, let $X$ be the Hilbert space
	\begin{equation}\label{def_space_x}
		X=\left\{ g\in L^2(\mathbb{T}^2) \, :\, \int_{\mathbb{T}} g(x_1,x_2)\, \diff{x}_1=0\  \hbox{\rm for a.e.}\ x_2\in \mathbb{T} \right\},
	\end{equation}
	endowed with the $L^2(\mathbb{T}^2)$ inner product and let $\mathrm{P}_X$ be the orthogonal projector from $L^2(\mathbb{T}^2)$ onto $X$. 
	Then for every $\tau,\delta>0$ there exists 
	$u\in L^2([0,\tau];L^2(\mathbb{T}^2))$ such that the solution $\varphi_v$ of the system \eqref{a_doua_advectie} and \eqref{CI_prima}, where $v$ satisfies \eqref{prima_curgere}--\eqref{CI_prima_curgere}  with $h=0$, is such that
	\begin{equation*}
		\|\mathrm{P}_X\varphi_v (\tau,\cdot)\|_{L^2(\mathbb{T}^2)}<\delta  ,
	\end{equation*}
	for every $f\in L^2(\mathbb{T}^2)$ with $\|f\|_{L^2(\mathbb{T}^2)}\leqslant 1$.
\end{theorem}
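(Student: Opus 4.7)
The plan is to combine the exact controllability of the Euler equations proved in \Cref{sec_exact_Euler} with the enhanced dissipation caused by a large stationary shear flow. Take $\tilde v(x) = (\sin(x_2), 0)$, which is a smooth stationary Euler solution since $(\tilde v \cdot \nabla)\tilde v \equiv 0$, and which lies in $H^k_\sigma$ for every $k$. Given $\tau, \delta > 0$, I choose a (large) parameter $a > 0$ and a (small) time $\tau_1 \in (0, \tau)$ to be fixed below, and construct $u$ in two phases. On $[0, \tau_1]$, I invoke the exact controllability of \Cref{sec_exact_Euler} to produce a control $u \in L^2([0,\tau_1]; L^2(\mathbb{T}^2))$ supported in $\omegaup$ that steers $v(0) = v_0$ to $v(\tau_1) = a\tilde v$. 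On $[\tau_1, \tau]$, I set $u \equiv 0$, so that the stationarity of $a\tilde v$ forces $v(t) \equiv a\tilde v$ throughout this interval.

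Next, note that $X$ and its $L^2(\mathbb{T}^2)$-orthogonal complement (consisting of functions depending only on $x_2$) are invariant subspaces for both $-\Delta$ and $\tilde v \cdot \nabla = \sin(x_2)\partial_{x_1}$; indeed, integrating in $x_1$ annihilates both operators applied to $\phi \in X$. Consequently, $\mathrm{P}_X$ commutes with the advection--diffusion semigroup on $[\tau_1, \tau]$, and $\mathrm{P}_X \varphi_v(\tau) = T_a(\tau - \tau_1)\, \mathrm{P}_X \varphi_v(\tau_1)$, where $T_a(\cdot)$ denotes the semigroup on $X$ generated by $-\Delta + a\sin(x_2)\partial_{x_1}$. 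The $L^2$-contractivity of advection--diffusion with a divergence-free velocity yields $\|\mathrm{P}_X \varphi_v(\tau_1)\|_{L^2(\mathbb{T}^2)} \leq \|\varphi_v(\tau_1)\|_{L^2(\mathbb{T}^2)} \leq 1$, reducing the task to proving the enhanced-dissipation statement $\|T_a(\tau - \tau_1)\|_{\mathcal{L}(X)} \to 0$ as $a \to \infty$.

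This enhanced-dissipation statement should follow from the spectral criterion underlying \Cref{th_wei_1} applied to the restriction of $\sin(x_2)\partial_{x_1}$ to $X$: expanding $\phi \in X \cap H^1$ in Fourier modes as $\phi = \sum_{k \neq 0} \phi_k(x_2) e^{ikx_1}$, the eigenvalue equation $ik\sin(x_2)\phi_k = \lambda \phi_k$ forces $\phi_k \equiv 0$ for every $k \neq 0$ (since $\sin(x_2)$ is non-constant), so the transport operator has no non-trivial eigenvectors in $X \cap H^1$. The main obstacle is precisely that \Cref{th_wei_1} is stated on $L^2(M)$ modulo constants, whereas we need an analogous statement on the invariant closed subspace $X$; one either adapts the Constantin et al./Wei proof to $X$ (the argument is spectral and local, and should go through essentially verbatim on $X$), or appeals to known quantitative enhanced-dissipation estimates for shear flows. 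Granting this, one chooses $a$ large enough so that $\|T_a(\tau - \tau_1)\|_{\mathcal{L}(X)} < \delta$, and the conclusion $\|\mathrm{P}_X \varphi_v(\tau)\|_{L^2(\mathbb{T}^2)} < \delta$ follows immediately.
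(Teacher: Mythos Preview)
Your proof is correct and follows essentially the same approach as the paper: steer the Euler flow to a large shear $a\tilde v$ on an initial subinterval via the exact controllability of \Cref{sec_exact_Euler}, then switch the control off and let the stationary shear provide enhanced dissipation on the invariant subspace $X$. The paper closes the one gap you flag---that \Cref{th_wei_1} is stated on $L^2(M)/\mathrm{constants}$ rather than on $X$---by quoting the abstract Hilbert-space version of the Constantin et al./Wei criterion (Theorem~\ref{th_wei_abstract}) and then verifying in Lemma~\ref{lema_veche} that the shear transport operator has no eigenvectors in $H^1\cap X$, exactly along the lines of your Fourier-mode sketch.
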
 

\begin{remark}
	An example of $\omega$ satisfying the assumption in Theorem~\ref{th_con_local} is
	$$
	\omega=\{ (a,b)\times (0,2\pi) \} \cup \{ (0,2\pi)\times (c,d) \} \subset \mathbb{T}^2,
	$$
	where $0<a<b<2\pi$ and $0<c<d<2\pi$.
\end{remark}

Let us also remark that by choosing an appropriate external force $h$ so that a relaxation enhancing vector field $\tilde{v}$ is an exact solution of~\eqref{prima_curgere}--\eqref{CI_prima_curgere}, we can get rid of the projection. More precisely, we have:

\begin{proposition}\label{prop24}
	Assume that $\tilde{v}\in H^{6}(\mathbb{T}^2;\mathbb{R}^2)$ is a relaxation enhancing divergence-free vector field in the sense of Definition~\ref{def_enhancement}.
	Let $\tau,\delta>0$, and let $a>0$ be such that for every $f\in L^2(\mathbb{T}^2)$, with $\|f\|_{L^2(\mathbb{T}^2)}\leqslant 1$ and $\int_{\mathbb{T}^2} f(x)\, \diff{x}=0$, the solution $\phi_a$ of \eqref{prima_advectie} and \eqref{CI_prima_prima} satisfies
	\begin{equation*}
		\|\phi_a (\tau,\cdot)\|_{L^2(\mathbb{T}^2)}<\delta.
	\end{equation*}
	Moreover, let $h\in H^5(\mathbb{T}^2;\mathbb{R}^2)$ be such that
	$$
	\nabla \wedge \left[h-a^2(\tilde v\cdot \nabla\tilde v)\right]=0.
	$$
	Then under the notation and the assumptions in Theorem \ref{th_con_local},
	there exists $u\in L^2([0,\tau];H^2(\mathbb{T}^2;\mathbb{R}^2))$ such that
	the solution $\varphi_v$ of the system \eqref{a_doua_advectie} and \eqref{CI_prima} satisfies~\eqref{pana_aici_2}, where $v$ is the solution of~\eqref{prima_curgere}--\eqref{CI_prima_curgere} with $h$ as above.
\end{proposition}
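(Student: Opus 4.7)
The plan hinges on the observation that the external force $h$ is engineered precisely so that $a\tilde{v}$ is a stationary solution of the Euler equations with $u=0$. Indeed, substituting $v = a\tilde{v}$ into \eqref{prima_curgere} yields $a^2(\tilde v\cdot\nabla)\tilde v + \nabla p = h$, and the curl-free condition imposed on $h - a^2(\tilde v\cdot\nabla)\tilde v$ is exactly what is needed to solve for the pressure $p$. Hence $a\tilde v$ is a time-independent strong solution of the forced Euler system, and the proof reduces to choosing a control that drives $v$ close to $a\tilde v$ in the sense relevant for the advection--diffusion equation.

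The construction of the control proceeds by splitting the time interval. Fix a small auxiliary time $\tau_1 \in (0,\tau)$ to be chosen at the end. On $[0,\tau_1]$, apply the exact controllability result for the Euler system with controls supported in $\omegaup$ (to be established in Section~\ref{sec_exact_Euler}) to produce $u|_{[0,\tau_1]}$ steering the solution from $v_0$ to $a\tilde v$ at time $\tau_1$. On $[\tau_1,\tau]$, set $u \equiv 0$; by the preceding observation and uniqueness of strong Euler solutions, $v(t,\cdot) = a\tilde v$ for every $t \in [\tau_1,\tau]$.

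To conclude, invoke the H\"older continuity result of Section~\ref{sec_robust} to estimate $\|\varphi_v(\tau,\cdot) - \phi_a(\tau,\cdot)\|_{L^2(\mathbb{T}^2)} \leq C\|v - a\tilde v\|_{\mathrm{relax}}^\gamma$ for some exponent $\gamma > 0$, where $\|\cdot\|_{\mathrm{relax}}$ refers to the norm in \eqref{def_norm_relax}. Because $v$ and $a\tilde v$ coincide on $[\tau_1,\tau]$, only the short initial interval $[0,\tau_1]$ contributes to this distance; since \eqref{def_norm_relax} is integrated in time, the right-hand side tends to zero with $\tau_1$ provided $v|_{[0,\tau_1]}$ remains bounded in the appropriate spatial norm. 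Combining this with $\|\phi_a(\tau,\cdot)\|_{L^2} < \delta$ (strengthened if necessary to a strict uniform margin by choosing a slightly larger $a$ at the outset, which is permissible because $\tilde v$ is relaxation enhancing) yields \eqref{pana_aici_2} as soon as $\tau_1$ is small enough.

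The principal obstacle is the interplay between short control time and control amplitude: exactly controlling the Euler system from $v_0$ to $a\tilde v$ on a short interval typically forces a large control, so $v|_{[0,\tau_1]}$ need not be uniformly bounded as $\tau_1 \to 0$. Verifying compatibility with the precise form of \eqref{def_norm_relax} --- that its spatial ingredient is weak enough, or its time integrability strong enough, that the contribution over $[0,\tau_1]$ vanishes --- is the key technical point where the hypotheses of Section~\ref{sec_robust} and Section~\ref{sec_exact_Euler} have to be carefully matched.
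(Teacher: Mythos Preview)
Your setup is correct: $h$ is chosen precisely so that $a\tilde v$ is a stationary Euler solution, and the two-phase control scheme (exact control to $a\tilde v$ on $[0,\tau_1]$, then $u\equiv 0$) is exactly how the paper proceeds. The divergence is in the concluding step. You invoke the H\"older continuity result of Section~\ref{sec_robust} and then try to make $|||v-a\tilde v|||_{\tau,k}$ small by shrinking $\tau_1$; the obstacle you flag at the end is genuine and, in fact, fatal to this route. Exact controllability in Theorem~\ref{theorem:EulerExactGlobal} is obtained by time-rescaling, so steering on $[0,\tau_1]$ forces $\|v\|_{L^\infty([0,\tau_1];H^k_\sigma)}\sim\tau_1^{-1}$; hence $\int_0^{\tau_1}\|v-a\tilde v\|_k\,\diff{s}$ stays $O(1)$ and the relaxation norm does not vanish. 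More decisively, the constant $C$ in Theorem~\ref{L:1} depends on a bound $R\ge\|v\|_{C(J_\tau;H^{k+2}_\sigma)}$, which blows up with $\tau_1^{-1}$. (There is also the side issue that Theorem~\ref{L:1} requires $f\in H^{k+2}$, not merely $f\in L^2$; and $a$ is fixed by hypothesis, so you cannot ``choose a slightly larger $a$''.)

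The paper's argument, following verbatim the proof of Theorem~\ref{th_con_local}, bypasses Section~\ref{sec_robust} entirely. After reaching $v(\tau_1,\cdot)=a\tilde v$, one uses only the elementary $L^2$ energy estimate for \eqref{a_doua_advectie}---valid for any divergence-free drift, however large---to get $\|\varphi_v(\tau_1,\cdot)\|_{L^2}\le\|f\|_{L^2}\le 1$ and $\int_{\mathbb{T}^2}\varphi_v(\tau_1,\cdot)\,\diff{x}=0$. On $[\tau_1,\tau]$ the equation for $\varphi_v$ coincides with \eqref{prima_advectie}, so $\varphi_v(\tau,\cdot)=\phi_a[g](\tau-\tau_1,\cdot)$ with $g=\varphi_v(\tau_1,\cdot)$, and one applies the relaxation-enhancing hypothesis directly to this new datum $g$. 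No comparison between $\varphi_v$ and $\phi_a[f]$ is ever made, and no bound on $v$ during the control phase enters. The only residual point is that the hypothesis gives decay at time $\tau$ while one needs it at time $\tau-\tau_1$; this is harmless since the solution operator $T(\tau)$ is compact (so its operator norm is strictly below $\delta$) and $t\mapsto T(t)$ is norm-continuous for $t>0$.
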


\begin{remark}
	A natural open question would be to remove the projection in Theorem~\ref{th_con_local}.~An even harder problem is to consider controls active on a strict subset of $\mathbb{T}^2$ \textit{and} taking values in a finite-dimensional space $E$. This would be too ambitious for the present study even with the projection, as already the basic question of final state approximate controllability for incompressible fluids driven by finite-dimensional and physically localized controls corresponds to a well-known open problem posed by Agrachev in \cite{Agrachev2014}*{Section 7}; see also~Nersesyan and Rissel~\cite{NersesyanRissel2024}.
\end{remark}

To end this section we describe, for reader's convenience, the main ideas in the proofs of our main results in Theorem \ref{th_rel_new} and Theorem~\ref{th_con_local}. 

The main ingredient of the proof of  Theorem \ref{th_rel_new} is a new {\it approximate  tracking controllability} result for a system described by the incompressible Euler equations with a finite number of scalar controls. The term approximate tracking controllability means that the state of the system is maintained close (in an appropriate sense) to a reference trajectory for every time smaller than some given $T>0$. This result is then used to prove Theorem \ref{th_rel_new} in two steps. Firstly, we use the approximate tracking controllability result
to obtain a control $u$ such that the state trajectory $v$ of \eqref{prima_curgere}--\eqref{CI_prima_curgere}
(with $B$ given by \eqref{first_choice}) stays close to $a\tilde v$, where $a$ and $\tilde{v}$ are chosen so that the solution $\phi_a$ of~\eqref{prima_advectie} and~\eqref{CI_prima_prima} satisfies \eqref{decadere_mare_dubla} for any given $\tau,\delta>0$. We next show, using Theorem \ref{L:1} below, that the corresponding solution $\varphi_v$ of 
\eqref{a_doua_advectie} and \eqref{CI_prima} stays close to $\phi_a$; thus, it satisfies~\eqref{pana_aici_2}.

We use a different strategy to prove Theorem \ref{th_con_local}. 
The main idea is that the enhancement of decay of the projection is possible with \textit{shear flows} (see e.g. Bedrossian and Coti Zelati~\cite{bedrossian2017enhanced}). Since shear flows are exact solutions to the Euler system, controllability of solutions to a shear flow at an instant of time implies \textit{exact tracking controllability} of the shear flow after this instant: we just turn off the control immediately after the solution of the Euler system reached the shear flow, then the solution remains the same shear flow afterwards. Thus, Theorem~\ref{th_con_local} can be proved using a global exact controllability result for the Euler equation, which we develop in Section~\ref{sec_exact_Euler}. We note that this strategy does not work for the Navier--Stokes equations since there is no nontrivial periodic stationary shear flow solution. On the other hand, it is possible to extend Theorem~\ref{th_rel_new} to the Navier--Stokes equations, where the analysis would be less technical due to better regularity properties.

\section{A continuous dependence result}\label{sec_robust}

The aim of this section is to prove that the solution $\varphi_v$ of \eqref{a_doua_advectie} and \eqref{CI_prima} depends continuously
on the initial data $f$ and the advection velocity $v$. While continuity with respect to $f$ involves the norms of the standard Sobolev spaces defined 
in \eqref{defhk_nou}, the continuity with respect to $v$ involves a less standard norm, which is defined in \eqref{def_norm_relax} below. As the results in this section are not specific to two  space dimensions, we  consider here the following diffusion--advection system in $\mathbb{T}^d$ with a fixed integer $d\geqslant 2$:
\begin{align}
	\frac{\partial \varphi_v}{\partial t} (t,x) +  v(t,x) \cdot \nabla\varphi_v (t,x)  - \Delta \varphi_v  (t,x) = 0 && (t\geqslant 0,x\in \mathbb{T}^d), \tag{\ref{a_doua_advectie}$'$} \label{a_doua_advectie_prime} \\
	\varphi_v(0,x) = f (x) && (x\in \mathbb{T}^d), \tag{\ref{CI_prima}$'$} \label{CI_prima_prime}
\end{align}
where $v$ is a given divergence-free vector field on $\mathbb{T}^d$.

Let $X$ be a Banach space endowed with a
norm denoted by $\|\cdot\|_X$, $T>0$, and $J_T=[0,T]$. For $1\leqslant p<\infty$, the notation $L^p(J_T;X)$ stands for the
space of strongly measurable functions $v \colon J_T \rightarrow X$ such that
\begin{equation}
	\|v\|_{L^p(J_T;X)} \coloneqq \bigg(\int_0^T \|v(s)\|_X^p\, \diff{s}
	\bigg)^{\frac{1}{p}}<\infty.\nonumber
\end{equation}
The spaces $C(J_T;X)$ and $W^{k,p}(J_T;X)$ are defined in a similar way.   
We first note that the system \eqref{a_doua_advectie_prime} and \eqref{CI_prima_prime} is well-posed.
Although versions of this result are currently used in the literature we provide, for readers' convenience, a complete proof below.

\begin{proposition}\label{th_exist_unique}
	Let $T>0$ and $k\in \mathbb{N}$ with $k>\frac{d}{2}$. Then for every $f\in  H^{k+2}$ and $v \in C(J_T;H^{k+2}_\sigma)$
	there exists a unique solution 
	\begin{equation}\label{very_explicit}
		\varphi_v \in C(J_T;H^{k+2})\cap L^2(J_T;H^{k+3})\cap H^1(J_T;H^{k+1})
	\end{equation}
	of \eqref{a_doua_advectie_prime} and \eqref{CI_prima_prime}. Moreover, assume that
	$f$ and $v$ are such that 
	\begin{equation*}
		\|f\|_{k+2}+\|v\|_{C(J_T;H^{k+2}_\sigma)}\leqslant R
	\end{equation*}
	for some $R>0$. Then there exists a constant $C \coloneqq C(R,T,k,d)>0$ such that 
	\begin{equation}\label{marginire_c}
		\|\varphi_v\|_{C(J_T;H^{k+2})}+\|\varphi_v\|_{L^2(J_T;H^{k+3})} \leqslant C.
	\end{equation}
\end{proposition}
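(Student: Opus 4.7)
The plan is the standard Galerkin-plus-energy-estimates recipe for a linear parabolic equation with a prescribed divergence-free drift. Since \eqref{a_doua_advectie_prime}--\eqref{CI_prima_prime} is linear in $\varphi_v$ with $v$ given, no fixed-point argument is needed: the core of the proof is an a priori estimate in $H^{k+2}$, from which existence, uniqueness, the bound \eqref{marginire_c}, and the time regularity will all follow by routine arguments.

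First I would construct approximate solutions $\varphi_n$ by Galerkin projection onto the span of the first $n$ eigenfunctions of $A_0$ (equivalently, the first $n$ Fourier modes). Since $v\in C(J_T;H^{k+2}_\sigma)$ and the projected system is a linear ODE with continuous coefficients, $\varphi_n$ exists on all of $J_T$. Pairing with $\varphi_n$ in $L^2$ and using $\operatorname{div} v = 0$ kills the transport term and yields $\|\varphi_n(t)\|_0\leq \|f\|_0$. The main step is the $H^{k+2}$ estimate: applying $A_0^{(k+2)/2}$ and pairing with $A_0^{(k+2)/2}\varphi_n$, the uncommuted part of the transport term vanishes by integration by parts (again using $\operatorname{div} v=0$), leaving a commutator $\langle [A_0^{(k+2)/2},v]\cdot \nabla \varphi_n, A_0^{(k+2)/2}\varphi_n\rangle_{L^2}$. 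The hypothesis $k>\frac{d}{2}$ gives $H^{k+2}\hookrightarrow W^{1,\infty}$, so a Kato--Ponce-type commutator estimate dominates this by $C\|v\|_{k+2}\|\varphi_n\|_{k+2}^2$, producing a differential inequality of the form
\begin{equation*}
	\frac{\mathrm{d}}{\mathrm{d}t}\|\varphi_n\|_{k+2}^2 + 2\|\varphi_n\|_{k+3}^2 \leq C\|v\|_{k+2}\|\varphi_n\|_{k+2}^2.
\end{equation*}
Gronwall's inequality and integration in time then give \eqref{marginire_c} for $\varphi_n$ uniformly in $n$, with constant depending only on $R$, $T$, $k$, $d$.

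With uniform bounds in hand, I would extract a subsequence converging weakly-$*$ in $L^\infty(J_T;H^{k+2})$ and weakly in $L^2(J_T;H^{k+3})$; linearity in $\varphi$ means no compactness is required to pass to the limit in $v\cdot \nabla\varphi_n$, and the limit $\varphi_v$ solves \eqref{a_doua_advectie_prime}--\eqref{CI_prima_prime}. The regularity $\partial_t \varphi_v\in L^2(J_T;H^{k+1})$ then follows by reading the equation: $\Delta\varphi_v\in L^2(J_T;H^{k+1})$ is immediate, and $v\cdot \nabla\varphi_v\in L^2(J_T;H^{k+1})$ by the algebra property of $H^{k+1}$ (valid since $k+1>\frac{d}{2}$). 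The standard Lions--Magenes embedding applied to $V=H^{k+3}\subset H=H^{k+2}\subset V^*=H^{k+1}$ then yields $\varphi_v\in C(J_T;H^{k+2})$. Uniqueness is immediate by linearity, since the difference of two solutions obeys the same equation with zero data and its $L^2$ energy identity forces it to vanish. The only technically delicate point is the commutator estimate underlying the $H^{k+2}$ bound; an expository alternative would be to estimate $\|v\cdot\nabla\varphi\|_{k+2}$ directly via the $H^{k+2}$ algebra inequality and close by Young's inequality, absorbing a small fraction of $\|\varphi\|_{k+3}^2$ into the parabolic term on the left.
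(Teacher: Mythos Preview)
Your proposal is correct and follows the standard route; the paper, however, takes a slightly more economical path. Rather than running the Galerkin scheme by hand, it sets $\tilde V=H^{k+3}$, $\tilde H=H^{k+2}$, and applies the abstract Lions parabolic theory (citing Dautray--Lions and Dier--Zacher) to the time-dependent bilinear form $a(t;\gamma,\zeta)=\langle\gamma,\zeta\rangle_{k+3}+\langle v(t)\cdot\nabla\gamma,\zeta\rangle_{k+2}$. The only analytic input is boundedness and a G{\aa}rding inequality for $a$, both obtained from the crude bound $\|v\cdot\nabla\gamma\|_{k+2}\le C\|v\|_{k+2}\|\gamma\|_{k+3}$ via the algebra property of $H^{k+2}$---precisely the ``expository alternative'' you mention at the end. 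This avoids the Kato--Ponce commutator estimate entirely: the loss of one derivative is absorbed by Young's inequality against the parabolic term, just as you sketch. Your commutator approach buys a slightly sharper differential inequality (right-hand side $C\|v\|_{k+2}\|\varphi\|_{k+2}^2$ rather than $C\|v\|_{k+2}^2\|\varphi\|_{k+2}^2$), but this refinement is not needed for \eqref{marginire_c}. In short: both arguments close on the same energy functional; the paper outsources existence to an abstract theorem and uses the simplest available product estimate, while you build the approximation explicitly and invoke a sharper commutator bound.
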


\begin{proof}
	Let $\tilde V=H^{k+3}$ and $\tilde H=H^{k+2}$, where the spaces $H^l$, with $l\in \mathbb{N}$, have been introduced
	in \eqref{defhk_nou} and they are endowed with the inner products defined in \eqref{def_scal_l}.
	It is easily seen that $\tilde V\subset \tilde H$ with continuous and dense embedding and that the dual ${\tilde V}'$ of $\tilde V$ with respect to the pivot space $\tilde H$ is
	$H^{k+1}$.
	
	For every $t\in [0,T]$  we define the linear operator $L(t)$ mapping  $\gamma\in  \tilde{V}$ into $v(t,\cdot)\cdot \nabla \gamma$.
	Using the fact that $\tilde{H}$ is a Banach algebra,  we have
	\begin{align}
		\|L(t) \gamma\|_{{\tilde H}} \notag
		& =\|L(t) \gamma\|_{k+2} \leqslant \sum_{j=1}^d
		\left\|v_j(t,\cdot)\frac{\partial\gamma}{\partial x_j}\right\|_{k+2} \notag \\
		& \leqslant \sum_{j=1}^d\|v_j(t,\cdot)\|_{k+2}\, \left\|\frac{\partial\gamma}{\partial x_j}\right\|_{k+2} \notag \\
		& \leqslant d C M_T \|\gamma\|_{\tilde V} \qquad (\gamma\in \tilde V,\ t\in J_T\ {\rm a.e.}), \label{very_important}
	\end{align}
	where $C=C(k,d)>0$ is the constant for the Sobolev inequality and
	\[
	M_T=\max_{j\in \{1,\dots,d\}}\, \max_{t\in [0,T]}\,  \|v_j(t,\cdot)\|_{{k+2}}.
	\]
	It follows that the bilinear form
	\begin{align}
		a(t;\gamma,\zeta)
		& =\langle \gamma,\zeta\rangle_{\tilde V}+\langle L(t)\gamma,\zeta\rangle_{\tilde H} \notag \\
		& =\langle \gamma,\zeta\rangle_{k+3}  +\langle L(t)\gamma,\zeta\rangle_{k+2} \qquad (t\in J_T, \ \gamma,\zeta\in \tilde V) \label{forma_buna}
	\end{align}
	satisfies
	\begin{equation}\label{marginire_a}
		|a(t;\gamma,\zeta)|\leqslant (1+{d}C M_T) \|\gamma\|_{\tilde V}\, \|\zeta\|_{\tilde V} \qquad (t\in J_T, \ \gamma,\zeta\in \tilde V).
	\end{equation}
	Moreover, using \eqref{very_important}, it follows that for every $\zeta\in \tilde V$ we have
	\begin{equation*}
		{\rm Re}\, \langle L(t) \gamma,\gamma\rangle_{{\tilde H}}\leqslant d C M_T \|\gamma\|_{\tilde V}\,
		\|\gamma\|_{\tilde H} \leqslant \frac12 \left(\|{\gamma}\|_{\tilde V}^2+d^2 C^{2} M_T^2  \|{\gamma}\|_{\tilde H}^2\right).
	\end{equation*}
	Putting together the above estimate and \eqref{forma_buna} it follows that
	
	\begin{equation}\label{inferiora_a}
		{\rm Re}\, a(t;\gamma,\gamma)\geqslant \frac12 \|\gamma\|_{\tilde V}^2 -
		\frac12  d^2 C^{2} M_T^2  \|\gamma\|_{\tilde H}^2 \qquad (t\in J_T, \ \gamma\in \tilde V).
	\end{equation}
	By combining \eqref{marginire_a}, \eqref{inferiora_a}, and a classical result, see for instance Dautray and Lions~\cite{Dautray_Lions}*{p.~513} or Dier and Zacher~\cite{Dier}, it follows 
	that for every $f\in \tilde H$   and  $v\in C(J_T;H^{k+2}_\sigma)$ there exists a unique function 
	\begin{equation}\label{reg_abs}
		\varphi_v \in C(J_T;\tilde H)\cap L^2(J_T;\tilde V)\cap H^1(J_T;{\tilde V}')
	\end{equation}
	satisfying 
	\begin{equation}\label{ec_forma}
		\left\langle \frac{\partial\varphi_v}{\partial t}(t,\cdot),\zeta\right\rangle_{{\tilde V}',\tilde V}+a(t;\varphi_v(t,\cdot),\zeta)=0 \qquad (\zeta\in \tilde V,\ t\in J_T\ {\rm a.e.}),
	\end{equation}
	\begin{equation}\label{ci_forma}
		\varphi_v(0,\cdot)=f.
	\end{equation}
	It is easily seen that $\varphi_v$ satisfies \eqref{reg_abs}, \eqref{ec_forma}, and \eqref{ci_forma} if and only if
	$\varphi_v$ satisfies \eqref{very_explicit}, \eqref{a_doua_advectie_prime}, and \eqref{CI_prima_prime}. Thus, for every $f\in  H^{k+2}$ and $v \in C(J_T;H^{k+2}_\sigma)$
	there exists a unique solution 
	\begin{equation*}
		\varphi_v \in C(J_T;H^{k+2})\cap L^2(J_T;H^{k+3})\cap H^1(J_T;H^{k+1})
	\end{equation*}
	of \eqref{a_doua_advectie_prime} and \eqref{CI_prima_prime}.
	
	Taking next $\zeta=\varphi_v(t,\cdot)$ in \eqref{ec_forma} and using \eqref{inferiora_a} it follows that
	$$
	\frac12 \frac{\diff{{}}}{\diff{t}}\|\varphi_v(t,\cdot)\|_{\tilde H}^2+\frac12 \|\varphi_v(t,\cdot)\|_{\tilde V}^2
	\leqslant  \frac12 d^2 C^{2} M_T^2  \|\varphi_v(t,\cdot)\|_{\tilde H}^2.
	$$
	Finally, the conclusion \eqref{marginire_c} follows from the last estimate by applying Gr\"onwall's inequality.
\end{proof}

In order to state the main result in this section we introduce the {\it relaxation norm} of $v\in L^1(J_T;X)$, where $X$ is a Banach space, defined as follows (cf.~Gamkrelidze~\cite{G-78} and~Agrachev and Sarychev~\cite{AS-2006}):
\begin{equation}\label{def_norm_relax}
	||| v |||_{T,X} \coloneqq \sup_{t\in J_T}  \left\|\int_0^t v(s)\, \diff{s}\right\|_X.
\end{equation}

\begin{theorem} \label{L:1}
	Let $T>0$ and $k\in \mathbb{N}$ with $k>\frac{d}{2}$. For every $f\in  H^{k+2}$ and $v_1,v_2 \in C(J_T;H^{k+2}_\sigma)$
	let 
	\begin{equation*}
		\varphi_{v_1},\varphi_{v_2} \in C(J_T;H^{k+2})\cap L^2(J_T;H^{k+3})\cap H^1(J_T;H^{k+1})
	\end{equation*}
	be the corresponding solutions of \eqref{a_doua_advectie_prime} and \eqref{CI_prima_prime}, as constructed in Proposition~\ref{th_exist_unique}. Moreover, assume that
	$f,v_1,v_2$ are such that 
	\begin{equation}\label{E:1.4_1_2}
		\|f\|_{k+2}+\|v_1\|_{C(J_T;H^{k+2}_\sigma)} + \|v_2\|_{C(J_T;H^{k+2}_\sigma)} \leqslant R
	\end{equation}
	for some $R>0$. Then there exists a constant $C \coloneqq C(R,T,k,d)>0$ such that 
	\begin{equation}\label{E:1.3}
		\|\varphi_{v_1}-\varphi_{v_2}\|_{C(J_T;H^k)}\leqslant C ||| v_1 - v_2|||_{T,k}^{1/2}, 
	\end{equation}
	where $|||\cdot|||_{T,k}$ stands for the relaxation norm defined in \eqref{def_norm_relax} with $X=H^k$.
\end{theorem}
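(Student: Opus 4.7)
The plan is to perform an $H^k$-energy estimate on $w \coloneqq \varphi_{v_1} - \varphi_{v_2}$, which has zero initial data and satisfies $\partial_t w + v_1 \cdot \nabla w - \Delta w = -(v_1 - v_2)\cdot \nabla \varphi_{v_2}$. The central idea is to rewrite the source term via $v_1 - v_2 = \partial_s \tilde V$, where $\tilde V(t) \coloneqq \int_0^t (v_1 - v_2)(s)\,\diff{s}$ satisfies $\|\tilde V\|_{C(J_T;H^k)} = |||v_1 - v_2|||_{T,k}$, and then to integrate by parts in $s$ so that all resulting pairings involve $\tilde V$ rather than $v_1 - v_2$. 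Applying \Cref{th_exist_unique} to both $\varphi_{v_1}$ and $\varphi_{v_2}$ yields uniform a priori bounds $\|\varphi_{v_i}\|_{C(J_T;H^{k+2})} + \|\varphi_{v_i}\|_{L^2(J_T;H^{k+3})} + \|\partial_t \varphi_{v_i}\|_{L^2(J_T;H^{k+1})} \leq C(R,T,k,d)$, and the same bounds (up to a factor $2$) hold for $w$; these will be used repeatedly.

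First I test the equation for $w$ in the $H^k$ inner product from \Cref{prop_A0}, use $\operatorname{div} v_1 = 0$ together with a Kato--Ponce-type commutator estimate (valid because $k > d/2$ makes $H^k$ a Banach algebra embedded in $L^\infty$) to control $|\langle w, v_1 \cdot \nabla w\rangle_{H^k}|$, and absorb an auxiliary $\|w\|_{H^{k+1}}^2$ term by Young's inequality, obtaining
\[
\|w(t)\|_{H^k}^2 + c\int_0^t \|w(s)\|_{H^{k+1}}^2\,\diff{s} \leq C\int_0^t \|w(s)\|_{H^k}^2\,\diff{s} + 2\left|\int_0^t \langle w, (v_1 - v_2)\cdot \nabla \varphi_{v_2}\rangle_{H^k}\,\diff{s}\right|.
\]
I then substitute $v_1 - v_2 = \partial_s \tilde V$ in the last integrand and integrate by parts in the variable $s$; the boundary contribution at $s=0$ vanishes since $\tilde V(0) = 0$, so the troublesome integral becomes the sum of $\langle \tilde V(t)\cdot \nabla \varphi_{v_2}(t), w(t)\rangle_{H^k}$, $-\int_0^t \langle \tilde V \cdot \nabla \partial_s\varphi_{v_2}, w\rangle_{H^k}\,\diff{s}$, and $-\int_0^t \langle \tilde V \cdot \nabla \varphi_{v_2}, \partial_s w\rangle_{H^k}\,\diff{s}$. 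Each piece is then controlled via the Banach-algebra structure of $H^k$ and the a priori bounds: the boundary term is $\leq C|||v_1-v_2|||_{T,k}\|w(t)\|_{H^k} \leq \tfrac{1}{4}\|w(t)\|^2_{H^k} + C|||v_1-v_2|||_{T,k}^2$; the integral with $\partial_s\varphi_{v_2}$ is bounded via Cauchy--Schwarz using $\|\partial_s\varphi_{v_2}\|_{L^2(J_T;H^{k+1})} \leq C$ and absorbed via Young; the integral with $\partial_s w$ is handled by substituting $\partial_s w = \Delta w - v_1\cdot \nabla w - (v_1-v_2)\cdot \nabla \varphi_{v_2}$ from the equation and using the pointwise-in-time bound $\|w\|_{C(J_T;H^{k+2})} \leq C$ to tame the $\Delta w$ factor, so that all three resulting pieces are linear in $|||v_1-v_2|||_{T,k}$.

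Combining the estimates, choosing the Young constants small so as to absorb $\|w(t)\|^2_{H^k}$ into the left-hand side, and applying Gr\"onwall's inequality produces
\[
\|w\|_{C(J_T;H^k)}^2 \leq C\bigl(|||v_1 - v_2|||_{T,k} + |||v_1 - v_2|||_{T,k}^2\bigr).
\]
The trivial a priori bound $|||v_1 - v_2|||_{T,k} \leq \int_0^T \|v_1 - v_2\|_{H^k}\,\diff{s} \leq 2RT$ absorbs the quadratic term into the linear one, and taking square roots gives~\eqref{E:1.3}. The main obstacle in this scheme is the $\partial_s w$ integral: time-integration by parts reintroduces $v_1 - v_2$ through the equation and produces a $\Delta w$ factor that formally would cost two derivatives on $w$; what saves the argument is precisely the extra regularity $w \in C(J_T;H^{k+2})$ supplied by \Cref{th_exist_unique}, which converts this pairing into a quantity linear in the relaxation norm and yields the $1/2$ exponent on the right-hand side of~\eqref{E:1.3}.
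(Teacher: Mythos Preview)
Your proof is correct and follows essentially the same strategy as the paper: an $H^k$ energy estimate on the difference, followed by writing $v_1-v_2=\partial_s\tilde V$ and integrating by parts in time to extract the relaxation norm. The paper's treatment of the term $\int_0^t \langle \tilde V\cdot\nabla\varphi_{v_2}, \partial_s w\rangle_{H^k}\,\diff s$ is a bit more direct than yours: rather than substituting the equation for $\partial_s w$ and handling the resulting $\Delta w$ via the $C(J_T;H^{k+2})$ bound, it simply uses the a priori estimate $\|\partial_s w\|_{L^2(J_T;H^{k+1})}\le C$ (which follows from Proposition~\ref{th_exist_unique} applied to $\varphi_{v_1}$ and $\varphi_{v_2}$) together with $\|\tilde V\cdot\nabla\varphi_{v_2}\|_{C(J_T;H^k)}\le C\,|||v_1-v_2|||_{T,k}$, so that all three pieces of $I_2$ are bounded by $C\,|||v_1-v_2|||_{T,k}$ in one line and no quadratic term ever appears.
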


\begin{proof}
	Within this proof we denote by the same letter $C$ a generic constant possibly depending on $R,T,k,$ and $d$.  Moreover, for notational simplicity, instead of $\varphi_{v_1}$ and $\varphi_{v_2}$ we simply write $\varphi_{1}$ and $\varphi_{2}$.
	
	We set $\varphi(t) \coloneqq \varphi_2 (t)-\varphi_1(t)$ and note that $\varphi$ satisfies
	\begin{align}
		\frac{\partial \varphi}{\partial t} -\Delta \varphi + v_1 \cdot \nabla \varphi+(v_2-v_1)\cdot \nabla \varphi_2=0 && \text{in $J_T \times \mathbb{T}^d$}, \label{prima_above} \\
		\varphi(0,\cdot)=0 && \text{in $\mathbb{T}^d$}. \label{adoua_above}
	\end{align}
	Taking the inner product in $H^{k}$ of \eqref{prima_above} with $\varphi$, integrating with respect to time and using \eqref{adoua_above}
	we obtain that: 
	\begin{align}
		\frac12\|\varphi(t)\|_{k}^2+\int_0^t\|\varphi\|_{k+1}^2\, \diff{s}
		& =-\int_0^t  \langle v_1 \cdot \nabla  \varphi, \varphi\rangle_{k}\, \diff{s}\nonumber \\
		& \phantom{\leqslant(} -\int_0^t\langle (v_2-v_1)\cdot \nabla \varphi_2,\varphi\rangle_{k}
		\, \diff{s}\nonumber\\
		&\eqqcolon I_1+I_2.\label{E:1.6}
	\end{align}
	As $k>\frac{d}{2}$ implies that $H^k$ is an algebra, using \eqref{E:1.4_1_2}, we have 
	\begin{align}
		|I_1|
		& \leqslant \int_0^t \|v_1\|_{k} \, \|\varphi\|_{k+1} \, \|\varphi\|_{k}\, \diff{s} \notag \\
		& \leqslant \frac12\int_0^t \|\varphi\|_{k+1}^2 \, \diff{s}+\frac12\int_0^t \|v_1\|_{k}^2 \, \|\varphi\|_{k}^2\, \diff{s} \notag \\
		& \leqslant \frac12\int_0^t \|\varphi\|_{k+1}^2 \, \diff{s}+C\int_0^t \|\varphi\|_{k}^2\, \diff{s} \label{E:1.7}
	\end{align}
	for some $C>0$. To estimate $I_2$, let $\tilde v(t)=\smallint_0^t (v_1(\sigma)-v_2(\sigma))\, \diff{\sigma}$. Then, denoting the partial derivative with respect to $t$ by $'$,
	\begin{align}\label{minunat}
		\begin{aligned}
			I_2
			& =\int_0^t\langle (v_1-v_2)(s)\cdot \nabla \varphi_2(s),\varphi(s) \rangle_{k}\, \diff{s} \\
			& =\int_0^t\langle {\tilde v}'(s)\cdot \nabla \varphi_2(s),\varphi(s)\rangle_{k}\, \diff{s} \\
			& = \langle {\tilde v} (t)\cdot \nabla \varphi_2(t),\varphi(t)\rangle_{k} \\
			& \phantom{\leqslant(} -\int_0^t\langle {\tilde v}(s)\cdot \nabla \varphi_2'(s),\varphi(s)\rangle_{k}\, \diff{s} 
			-\int_0^t\langle {\tilde v}(s)\cdot \nabla \varphi_2(s),\varphi'(s)\rangle_{k}
			\, \diff{s}.
		\end{aligned}
	\end{align}
	Using the fact that $\tilde v$ is divergence-free, it is easily checked that
	\begin{equation}\label{eq:getting_rid_of_space_time_derivatives}
		\int_0^t\langle {\tilde v}(s)\cdot \nabla \varphi_2'(s),\varphi(s)\rangle_{k} \, \diff{s}=-
		\int_0^t\langle  \varphi_2'(s) \tilde v(s), \nabla \varphi(s)\rangle_{k}
		\, \diff{s}.
	\end{equation}
	Then~\eqref{minunat} and~\eqref{eq:getting_rid_of_space_time_derivatives}, together with \eqref{E:1.4_1_2} and estimate \eqref{marginire_c} in Proposition \ref{th_exist_unique}, we obtain
	\begin{equation}\label{eq:I2}
		|I_2| \leqslant C \sup_{0\leqslant s\leqslant t}\|\tilde v(s,\cdot)\|_k\leqslant C||| \tilde{v} |||_{T,k}
	\end{equation}
	for some $C>0$. Finally, \eqref{E:1.6}, \eqref{E:1.7}, and \eqref{eq:I2} combined with Gr\"onwall's inequality imply the required inequality \eqref{E:1.3}.
\end{proof}

\section{Approximate tracking controllability}\label{extremely_difficult}

In this section, we establish
a new approximate controllability result for the Euler equations using control functions with values in a fixed finite-dimensional space (see Theorem~\ref{T.2.1}).
This will play a central role in the proof of Theorem \ref{th_rel_new}. 

Our approach  is strongly inspired by the methods introduced by  Agrachev and Sarychev \cites{AS-05, AS-2006}
for the approximate controllability of the final state for the 2D Euler and Navier--Stokes equations using finite-dimensional controls.  These results have been extended to~the~3D Navier--Stokes equations by Shirikyan \cites{shirikyan-cmp2006, shirikyan-ihp-2007} and to~the~3D Euler equations by Nersisyan \cite{nersisyan-2010}.

An essential feature of our result is that, unlike in standard controllability theory, we do not control just the final state but the complete state trajectory: we prove the existence of controls such that the state trajectory remains arbitrarily close---with respect to the relaxation norm introduced in \eqref{def_norm_relax}---to any prescribed 
space-time function  $\psi=\psi(t,x)$. Remarkably, the here exposed mechanism heavily relies on the nonlinear character of the Euler system. 
As far as we know, the only existing work addressing the approximate controllability of the complete state trajectory for a nonlinear PDE system is by Nersesyan \cite{Ners-2015}, who studied this question for the 3D Navier--Stokes equations. We show below that the methods in \cites{shirikyan-cmp2006, shirikyan-ihp-2007, nersisyan-2010,Ners-2015} can be extended to obtain a new approximate controllability result for a system described by the 2D Euler equations.

On the other hand, let us note that we are unaware of any non-trivial
linear control system for which controllability of the complete state trajectory has been shown.
However, the question of tracking a finite-dimensional output of a system has been studied in several papers.
We can mention in this sense the recent work of Zamorano and Zuazua~\cite{zamorano2024tracking} considering finite-dimensional linear systems,
where this property is called \textit{tracking controllability}. Moreover, there are several results proving tracking controllability of a finite-dimensional output of an infinite-dimensional system, such as B\'{a}rcena-Petisco and Zuazua~\cite{barcena2024tracking}, in which the tracking controllability of some boundary values of the heat equation is studied. 
Another result on tracking of a finite-dimensional output can be found in  Glass et al.~\cite{GlassKolumbanSueur}, where the authors prove a tracking boundary controllability property for trajectories of rigid bodies in a two-dimensional incompressible perfect fluid (the fluid motion is not controlled).
One of the advantages of frequency-localized controls is the possibility to control the complete state trajectory of the system, which is critical for relaxation enhancement.

\subsubsection*{Notation reminder}

We continue here to use the notation introduced in Sections \ref{sec_prel} and \ref{sec_robust}, including the function spaces $\left\{ H_\sigma^k\right \}_{k\in \mathbb{N}}$ defined in \eqref{numarh}.
Moreover, for every $T>0$ we still denote by $J_T$ the interval $[0,T]$ and we consider the spaces
$L^p(J_T;X)$,  $C(J_T;X)$, and $W^{k,p}(J_T;X)$ introduced at the beginning of Section \ref{sec_prel}. 
Finally, in this section we consider only the case of two space dimensions, i.e., we take $d=2$ in the definition of the above-mentioned spaces.

\subsubsection*{Euler system driven by finite-dimensional controls}
For $k\in \mathbb{N}$, let $E$ be a finite-dimensional subspace of $H_\sigma^{k+4}$ and let $\{\theta_1,\dots,\theta_m\}$ be a basis
of $E$. Consider an ideal incompressible fluid flow described $(0,\infty)\times\mathbb{T}^2$ by the following initial-value
problem for the Euler system: 
\begin{equation}\label{equation:euler_finitedimcontrol}
	\frac{\partial v}{\partial t}  +(v\cdot\nabla)v +\nabla p=h+\sum_{j=1}^m u_j \theta_j, \quad
	{\rm div}\, v=0, \quad
	v(0,\cdot) = v_0  
\end{equation}
where $h$ is the fixed part of the external force (given function, assumed to be smooth enough with respect to the space variable, divergence-free, and integrable with respect to time) and $u=[u_1, \dots, u_m]^\top$ is the control function.  As usual, the pressure term can be eliminated by considering the projection of the system onto the space $\mathcal{H}$ which has been introduced in \eqref{DEFH}. We obtain in this way that \eqref{equation:euler_finitedimcontrol} can be rephrased as: 
\begin{align}
	\frac{\partial v}{\partial t}+N(v) =h+\sum_{j=1}^m u_j \theta_j && \text{in $(0,\infty)\times \mathbb{T}^2$}, \label{E:2.3} \\
	v(0,\cdot) = v_0 && \text{in $\mathbb{T}^2$}, \label{E:2.2_bis}
\end{align}
where the nonlinear operator $N$ has been defined in \eqref{nonlinear}.

\subsubsection*{Approximate tracking controllability}

In what follows, we fix $T>0$ and assume that $h\in L^1(J_T;H^{k+2}_\sigma)$. For the well-posedness of the system \eqref{E:2.3} and \eqref{E:2.2_bis} we refer to the papers \cites{Wol33,Gol66,McG67,Kato67,Kato-72} or to Chapter~17 in~\cite{T23}. In particular, when $k\geqslant 3$, it is known that for any $v_0 \in H^k_\sigma$ and~$u\in L^1(J_T;\mathbb{R}^m)$, there is a unique 
solution~$v\in C(J_T;H^k_\sigma)\cap W^{1,1}(J_T;H^{k-1}_\sigma)$. Given any $f\in H^k$ and $v\in C(J_T;H^k_\sigma)$, we  denote by $\varphi_v$ the unique solution of the problem \eqref{a_doua_advectie} and \eqref{CI_prima}.

\begin{theorem}\label{T.2.1}
	For $k\geqslant 2$, let $E\subset H^{k+4}_\sigma$ be a saturating subspace. Then, for any $\varepsilon>0$, $\psi\in  W^{1,1}(J_T;H^{k+2}_\sigma)\cap L^1(J_T;H^{k+3}_\sigma)$, and $R>0$, there is a control $u\in C^\infty(J_T;\mathbb{R}^m)$ such that
	$$
	\|v(T) - \psi(T) \|_{k+2}  + |||v-\psi|||_{T,k}+\|\varphi_v -  \varphi_\psi \|_{C(J_T;H^k)}<\varepsilon,
	$$
	uniformly with respect to $f\in H^{k+2}$ with $\|f\|_{k+2}\leqslant R$, where $v$ is the solution of~\eqref{E:2.3} with initial condition $v(0)=\psi(0)$,
	$\varphi_v$ is the solution of the problem \eqref{a_doua_advectie} and \eqref{CI_prima}, and $\varphi_\psi$ is the solution of the problem \eqref{a_doua_advectie} and \eqref{CI_prima} with $v=\psi$.  
\end{theorem}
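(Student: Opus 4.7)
The plan is to follow the Agrachev--Sarychev saturation / convex-hull scheme~\cites{AS-05, AS-2006} in its trajectory-tracking variant due to Nersesyan~\cite{Ners-2015}. First, I would observe that the third summand $\|\varphi_v-\varphi_\psi\|_{C(J_T;H^k)}$ comes for free: by Theorem~\ref{L:1},
\begin{equation*}
    \|\varphi_v-\varphi_\psi\|_{C(J_T;H^k)} \leq C\,|||v-\psi|||_{T,k}^{1/2},
\end{equation*}
uniformly in $f\in H^{k+2}$ with $\|f\|_{k+2}\leq R$, as soon as $v$ and $\psi$ are a~priori bounded in $C(J_T;H^{k+2}_\sigma)$. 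Hence it is enough to construct $u\in C^\infty(J_T;\mathbb{R}^m)$ such that the solution $v$ of~\eqref{E:2.3}--\eqref{E:2.2_bis} with $v(0)=\psi(0)$ satisfies
\begin{equation*}
    |||v-\psi|||_{T,k} + \|v(T)-\psi(T)\|_{k+2} < \varepsilon
\end{equation*}
while $v$ remains uniformly bounded in $C(J_T;H^{k+2}_\sigma)$ (so Theorem~\ref{L:1} applies).

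To that end, I introduce the \emph{ideal forcing}
\begin{equation*}
    \tilde u(t) \coloneqq \frac{\partial \psi}{\partial t}(t) + N(\psi(t)) - h(t),
\end{equation*}
which lies in $L^1(J_T;H^{k+2}_\sigma)$ by the hypotheses on $\psi$ and $h$ (using that $H^{k+2}$ is a Banach algebra for $d=2$ and $k\geq 2$). If $\tilde u(t)$ happened to take values in $E$, picking $u$ with $Bu=\tilde u$ would give $v\equiv \psi$. The actual obstacle is that $\tilde u(t)\in H^{k+2}_\sigma$ need not lie in $E$. I would overcome this by invoking the saturation hypothesis: since $E_\infty$ is dense in $H^k_\sigma$ (and, for the saturating examples in use, in higher Sobolev scales as well, as per the remark following the saturation definition), one can approximate $\tilde u$ in $L^1(J_T;H^{k+2}_\sigma)$ by a piecewise smooth function $w_j$ taking values in some finite $E_j$. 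A Gronwall estimate applied to the difference between $\psi$ and the solution of the formally extended system
\begin{equation*}
    \frac{\partial v}{\partial t} + N(v) = h + w_j(t), \qquad w_j(t)\in E_j,
\end{equation*}
then shows that this solution tracks $\psi$ arbitrarily closely in $C(J_T;H^{k+2}_\sigma)$. The remaining task is therefore to \emph{simulate} an $E_j$-valued forcing by a genuine $E$-valued control while preserving the relaxation-norm proximity on $J_T$ together with the $H^{k+2}$-proximity at the single time $t=T$.

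The core difficulty is this simulation step, which I would carry out by iterating a one-step reduction from $E_j$ to $E_{j-1}$ finitely many times, ending at $E_0=E$. Given a target forcing $w(t)=\eta(t)-\sum_{i=1}^p N(\zeta^i(t))$ with $\eta(t),\zeta^i(t)\in E_{j-1}$, I partition $J_T$ into sub-intervals of length $\delta\ll 1$ and prescribe on each a control made of the slow component $\eta$ plus a fast, large-amplitude oscillating component in the directions $\zeta^i$, of frequency $\sim\delta^{-1}$ and with carefully selected zero-mean profile; the quadratic self-interaction of the resulting oscillating part of $v$ through $N$ reproduces, on the slow scale, precisely the term $-\sum_{i=1}^p N(\zeta^i)$. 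The generated $v$ then differs from the slow tracking dynamics by an oscillation of amplitude $\mathcal{O}(1)$ in $H^{k+2}_\sigma$ whose primitive is only $\mathcal{O}(\delta)$, so $|||v-\psi|||_{T,k}=\mathcal{O}(\delta)$ although $v$ is \emph{not} close to $\psi$ in the stronger $C(J_T;H^{k+2}_\sigma)$ norm; standard 2D Euler a~priori estimates (cf.~\cites{Kato67, Kato-72, T23}) nevertheless bound $\|v\|_{C(J_T;H^{k+2}_\sigma)}$ uniformly in $\delta$. To secure the endpoint inequality $\|v(T)-\psi(T)\|_{k+2}<\varepsilon$, I append a short terminal interval $[T-\rho,T]$ on which the classical Agrachev--Sarychev--Shirikyan final-state approximate controllability result~\cites{AS-05, AS-2006, shirikyan-cmp2006, nersisyan-2010} is applied to steer $v(T-\rho)$ into any prescribed $H^{k+2}_\sigma$-neighbourhood of $\psi(T)$; the contribution of $[T-\rho,T]$ to the relaxation norm is $\mathcal{O}(\rho)$ and thus negligible. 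Uniformity in $f$ with $\|f\|_{k+2}\leq R$ is automatic, since $f$ enters only via~\eqref{a_doua_advectie}, and the relevant constants in Proposition~\ref{th_exist_unique} and Theorem~\ref{L:1} depend on $R,T,k,d$ only.
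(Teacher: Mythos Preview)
Your plan is essentially the paper's: introduce the ideal forcing $u_*=\dot\psi+N(\psi)-h$, approximate it by an $E_N$-valued function using the density of $E_\infty$, then iterate a one-step reduction $E_j\to E_{j-1}$ in which fast, zero-mean oscillations in the $\zeta^i$-directions manufacture the missing $-\sum N(\zeta^i)$ through the quadratic term; the relaxation-norm smallness and the a~priori $C(J_T;H^{k+2}_\sigma)$ bound then feed into Theorem~\ref{L:1} exactly as you say. The paper formalises the oscillation step via an \emph{extended} system with a second control $\zeta$ (Proposition~\ref{P.2}), then passes back to a single $E$-valued control by $\hat u_n=u_n+\partial_t\hat\zeta_n$, where $\hat\zeta_n$ is a smooth approximation of the piecewise-constant oscillation.

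The one substantive difference is how the endpoint bound $\|v(T)-\psi(T)\|_{k+2}<\varepsilon$ is obtained. You propose a separate terminal correction on $[T-\rho,T]$ via the classical final-state result; the paper avoids this entirely by choosing the smooth oscillation with $\hat\zeta_n(0)=\hat\zeta_n(T)=0$, so that ${\cal R}_T(w_0,0,\hat u_n)={\cal R}_T(w_0,\hat\zeta_n,u_n)$ and the final-time proximity is inherited directly from the $C(J_T;H^{k+2}_\sigma)$ convergence of the extended system. This is both simpler and sidesteps a point you leave unjustified: your claim that the terminal maneuver contributes only $\mathcal{O}(\rho)$ to the relaxation norm presupposes a trajectory bound on $[T-\rho,T]$ that is uniform as $\rho\to0$, which is not automatic from the black-box final-state result and would need a separate (though feasible) argument. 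Since in your own description the fast oscillation already has a zero-mean profile on each sub-interval, you could equally well arrange it to vanish at $t=T$ and drop the terminal correction altogether.
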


In order to prove the above result, we need some new notation and several preliminaries.

Firstly, for every~$\ell=[l_1,l_2]^\top\in {\mathbb Z}^2_*$, we define the functions
$$
c_\ell(x) =  \ell^\bot \cos(\ell \cdot x ), \quad s_\ell(x) = \ell^\bot
\sin(\ell \cdot x ) \qquad (x\in \mathbb{T}^2),
$$
where  $\ell^\bot \coloneqq [-l_2, l_1]^\top$. We note that the family $\{c_\ell, s_\ell\}_{\ell\in {\mathbb Z}^2_*}$ is a complete orthogonal system in $H^k_\sigma$. For any finite subset ${\cal K}\subset {\mathbb Z}^2_*$, let    
\begin{equation}\label{E:2.4}
	E({\cal K}) \coloneqq \textup{span} \{ c_\ell,s_\ell: \ell\in {\cal K}\}.
\end{equation}
We  say that~${\cal K}$ is a {\it generator}  if any vector in ${\mathbb Z}^2$ can be expressed as a finite integer linear combination of elements from ${\cal K}$. 
The following result provides a characterization of saturating spaces of the form  \eqref{E:2.4}, see Section~4.1 in~Hairer and Mattingly~\cite{HM-2006} for a proof.  

\begin{lemma}\label{lem:HM}
	The space $E({\cal K})$ is saturating if and only if~${\cal K}$ is a generator of~${\mathbb Z}^2$  and contains two non-parallel vectors of different lengths.
\end{lemma}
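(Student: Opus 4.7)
The plan is to reduce saturation to a concrete algebraic computation on Fourier modes and then run an induction over the iterates $E_j$. Since $N$ is quadratic, polarization gives $\mathcal{F}(E) = E + \operatorname{span}\{B(u,v) : u, v \in E\}$, where $B(u,v) := \tfrac{1}{2}\Pi[(u\cdot\nabla)v + (v\cdot\nabla)u]$. Because $\{c_\ell, s_\ell\}_{\ell\in\mathbb{Z}^2_*}$ is a complete orthogonal system in $H_\sigma^k$, the saturation condition amounts to showing that the iterated application of $B$ to the generators $c_\ell, s_\ell$ ($\ell\in\mathcal{K}$) eventually produces, via linear combinations, every mode $c_n, s_n$ with $n\in\mathbb{Z}^2_*$.

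The key calculation is the following. A direct expansion of the advection terms using product-to-sum identities, followed by the Leray projection (which on a pure mode $a\cos(k\cdot x)$ or $a\sin(k\cdot x)$ returns its $k^\perp$-component), yields identities of the form
\begin{equation*}
B(c_\ell, c_m) = \frac{(\ell^\perp\cdot m)(|\ell|^2-|m|^2)}{4}\left(\frac{s_{\ell+m}}{|\ell+m|^2} + \frac{s_{\ell-m}}{|\ell-m|^2}\right),
\end{equation*}
with analogous formulas for $B(s_\ell, s_m)$ (same prefactor, opposite sign on the $\ell+m$ contribution) and for $B(c_\ell, s_m)$ (same prefactor, with $c_{\ell\pm m}$ in place of $s_{\ell\pm m}$). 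Two features are crucial: the scalar prefactor $(\ell^\perp\cdot m)(|\ell|^2-|m|^2)$ is nonzero precisely when $\ell$ and $m$ are non-parallel and of different lengths; and by taking appropriate linear combinations of $B(c_\ell, c_m) \pm B(s_\ell, s_m)$ (and similarly for the mixed bracket), one isolates $s_{\ell+m}$, $s_{\ell-m}$, $c_{\ell+m}$, $c_{\ell-m}$ individually inside $\mathcal{F}(E(\mathcal{K}))$.

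For the \emph{sufficiency} direction, set $\mathcal{K}_j := \{\ell\in\mathbb{Z}^2_* : c_\ell, s_\ell \in E_j\}$. The identities above give the recursion $\mathcal{K}_{j+1}\supseteq \mathcal{K}_j \cup\{\ell\pm m : \ell,m\in\mathcal{K}_j,\ \ell\not\parallel m,\ |\ell|\ne|m|\}$. Starting from the seed pair of non-parallel vectors of different lengths in $\mathcal{K}_0$, and leveraging the hypothesis that $\mathcal{K}$ generates $\mathbb{Z}^2$ as a group, one proves by induction that $\bigcup_j\mathcal{K}_j = \mathbb{Z}^2_*$, whence density of $E_\infty$ in $H_\sigma^k$ follows from the completeness of $\{c_\ell,s_\ell\}$. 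For the \emph{necessity} direction, if $\mathcal{K}$ fails to generate $\mathbb{Z}^2$ then the subgroup $\Lambda$ it generates is a proper sublattice and $\overline{E(\Lambda)}$ is invariant under the iteration (all brackets land in $\Lambda$), so $E_\infty$ cannot be dense; and if every non-parallel pair in $\mathcal{K}$ has equal length then the prefactor vanishes on every basis interaction, so $\mathcal{F}(E(\mathcal{K}))=E(\mathcal{K})$ and $E_\infty=E(\mathcal{K})$ is finite-dimensional. The main obstacle is the combinatorial bookkeeping in the sufficiency induction: one must verify that at every stage enough non-parallel / different-length pairs remain in $\mathcal{K}_j$ so that each lattice direction of $\mathbb{Z}^2_*$ is eventually reached. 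This is the subtle core of the argument, and it is where the group-generating hypothesis on $\mathcal{K}$ is used in an essential way.
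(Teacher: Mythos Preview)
The paper does not give its own proof of this lemma; it simply refers to Section~4.1 of Hairer--Mattingly. Your proposal is precisely the argument found there: compute the symmetrized bilinear form $B$ on the trigonometric basis, observe that the scalar prefactor $(\ell^\perp\cdot m)(|\ell|^2-|m|^2)$ controls which new Fourier modes are generated, and run an induction on the set $\mathcal{K}_j$ of reachable modes. Your formula for $B(c_\ell,c_m)$ is correct (up to the usual sign conventions), the trick of isolating $s_{\ell\pm m}$ and $c_{\ell\pm m}$ via $B(c_\ell,c_m)\pm B(s_\ell,s_m)$ is the right one, and your necessity argument is complete. One small point: the identity $\mathcal{F}(E)=E+\operatorname{span}\{B(u,v)\}$ is not automatic from the paper's definition, since the set $\{\eta-\sum N(\zeta^i)\}$ is a priori only a cone; but for spaces of the form $E(\mathcal{K})$ it follows from $N(c_\ell)=N(s_\ell)=0$ and $N(\alpha c_\ell+\beta c_m)=2\alpha\beta\,B(c_\ell,c_m)$, so both signs of each $B$ arise as values of $-N$.

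Where your proposal stops short of a proof is exactly where you say it does: the sufficiency induction. The assertion ``one proves by induction that $\bigcup_j\mathcal{K}_j=\mathbb{Z}^2_*$'' hides the real content, and the constraint that one may only add or subtract \emph{non-parallel} vectors of \emph{different lengths} makes this genuinely delicate. The Hairer--Mattingly argument proceeds in two stages: first, from the seed pair one produces arbitrarily long vectors in $\mathcal{K}_\infty$ (iterating $\ell\mapsto \ell+m_0$ preserves non-parallelism with $m_0$ and eventually forces $|\ell|\neq|m_0|$); second, once a sufficiently long vector $a\in\mathcal{K}_\infty$ is available, every $n$ in the group generated by $\mathcal{K}$ can be reached by steps $\pm a$ and $\pm\ell$ (for $\ell\in\mathcal{K}$), with the length disparity guaranteeing that the constraint is met at each step. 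As written, your proposal is a correct outline rather than a complete proof, with this combinatorial core left to be filled in.
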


A simple example of ${\cal K}$ satisfying the assumptions in the lemma above is given below:

\begin{example}\label{example:saturating}
	Let ${\cal K}=\{[1,0]^\top;[1,-1]^\top\}$. Then, a saturating four-dimensional space is given by
	\begin{equation*}
		E({\cal K}) = \textup{span} \{ \theta_1,\theta_2,\theta_3,\theta_4 \},
	\end{equation*}
	where $\theta_1,\dots,\theta_4$ are the functions introduced in \Cref{cor_first}.
\end{example}

\subsection{Proof of Theorem \ref{T.2.1}} \label{S:reduct}

In this subsection, we obtain \Cref{T.2.1} as a consequence of an auxiliary result (Theorem~\ref{T.reduct}) which states that any solution $v$ of~\eqref{E:2.3} and~\eqref{E:2.2_bis} with a control $u$ in $E_1=\mathcal{F}(E)$, together with the solution $\varphi_v$ of~\eqref{a_doua_advectie} and \eqref{CI_prima}, can be approximately tracked using the corresponding solutions with a control $u$ in $E$.

\subsubsection*{Setup and well-posedness}

Together with \eqref{E:2.3} and \eqref{E:2.2_bis} we consider the system
\begin{align}
	\frac{\partial w}{\partial t}+N(w+\zeta) =h+u && \text{in $[0,T]\times \mathbb{T}^2$}, \label{E:2.3b_bis} \\
	w(0,\cdot) = w_0 && \text{in $\mathbb{T}^2$} \label{E:2.3b_init_bis}
\end{align}
with two controls $\zeta=\zeta(t,x)$ and $u=u(t,x)$. The following well-posedness and stability result can be proved by applying the methods of the references \cites{Wol33,Gol66,McG67,Kato67,Kato-72,T23}; see also Theorem~2.1 in~\cite{nersisyan-2010} for a similar result in the context of the 3D Euler system.

\begin{proposition}\label{P:4.1}
	Let $k\geqslant 3$ be an integer. Then for any $T>0$, $w_0\in H^k_\sigma$, $\zeta\in L^2(J_T; H^{k+1}_\sigma)$, and $u\in L^1(J_T;H^k_\sigma)$, there is a unique solution $w\in C(J_T; H^k_\sigma)\cap 
	W^{1,1}(J_T;H^{k-1}_\sigma)$ to the problem \eqref{E:2.3b_bis} and \eqref{E:2.3b_init_bis}. Furthermore, the following properties~hold:
	\begin{itemize}
		\item[\rm (i)]  The operator 
		$ \mathcal{R}$  mapping a triple $(w_0,\zeta,u)$ to the solution $w$ is continuous
		from $H^k_\sigma\times L^2(J_T; H^{k+1}_\sigma)\times L^1(J_T;H^k_\sigma)$ to $C(J_T; H^k_\sigma)\cap 
		W^{1,1}(J_T;H^{k-1}_\sigma)$.  
		\item[\rm (ii)] The operator $ \mathcal{R}$ is locally Lipschitz with respect to appropriately chosen norms. More precisely,  for any $M>0$, there is a constant $C>0$ such that 
		\begin{align*}
			& \|{\cal R}(w_0,\zeta, u)-{\cal R}(v_0,\xi, g)\|_{C(J_T;H^{k-1}_\sigma)} \\
			& \quad \leqslant C\left( \|w_0-v_0\|_{H^{k-1}_\sigma}+\|\zeta-\xi\|_{L^2(J_T;H^k_\sigma)}+\|u-g\|_{L^1(J_T;H^{k-1}_\sigma)} \right),
		\end{align*}
		provided that
		\begin{align*}
			\|w_0\|_k&+\|v_0\|_k+\|\zeta\|_{L^2(J_T; H^{k+1}_\sigma)}+\|\xi\|_{L^2(J_T; H^{k+1}_\sigma)} \\&+\|u\|_{L^1(J_T; H^{k}_\sigma)} +\|g\|_{L^1(J_T; H^{k}_\sigma)}\leqslant M.
		\end{align*}
	\end{itemize}
\end{proposition}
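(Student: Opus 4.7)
My plan is to view \eqref{E:2.3b_bis} as a 2D Euler-type problem for $w$ perturbed by linear zero-order-in-$w$ terms plus an inhomogeneous forcing. Expanding the nonlinearity,
\[
N(w+\zeta) = N(w) + \Pi\bigl[(w\cdot\nabla)\zeta + (\zeta\cdot\nabla)w\bigr] + N(\zeta),
\]
and noting that $N(\zeta)\in L^1(J_T;H^k_\sigma)$ because $\zeta\in L^2(J_T;H^{k+1}_\sigma)$ and $H^k$ is a Banach algebra for $k\geq 3$ in two space dimensions, the equation takes the form $\partial_t w + N(w) + L_\zeta(w) = \tilde h$ with a linear operator $L_\zeta$ and forcing $\tilde h \in L^1(J_T;H^k_\sigma)$. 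For existence I would build approximate solutions by Friedrichs mollification of the nonlinearity and close uniform $H^k$ estimates by pairing with $A_0^{k}w$, using Kato--Ponce commutator bounds and the cancellation $\int_{\mathbb{T}^2}\Pi[(a\cdot\nabla)b]\cdot b\,\mathrm{d}x=0$ for divergence-free $a$ against divergence-free $b$; this yields
\[
\tfrac{\mathrm{d}}{\mathrm{d}t}\|w\|_k^2 \leq C\bigl(\|\nabla w\|_{L^\infty} + \|\zeta\|_{k+1}\bigr)\|w\|_k^2 + C\|\zeta\|_{k+1}^2 \|w\|_k + 2\|\tilde h\|_k\|w\|_k.
\]
The bound on $\|\nabla w\|_{L^\infty}$ needed to close this globally on $J_T$ comes from the two-dimensional vorticity transport equation: with $\omega := \nabla\wedge w$ and $\omega_\zeta := \nabla\wedge\zeta$, one has $\partial_t\omega + (w+\zeta)\cdot\nabla\omega = -(w+\zeta)\cdot\nabla\omega_\zeta + \nabla\wedge(h+u)$; integrating along the flow of $w+\zeta$ gives an $L^\infty$ bound on $\omega$, and Biot--Savart plus a Beale--Kato--Majda-type continuation argument closes the $H^k$ estimate on the whole of $J_T$, exactly as in the classical Kato theory of \cite{Kato67}.

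For the continuous and Lipschitz dependence claims, set $\delta = w-v$ for two solutions and expand
\[
N(w+\zeta) - N(v+\xi) = \Pi\bigl[(\delta\cdot\nabla)(w+\zeta) + ((v+\xi)\cdot\nabla)\delta + ((\zeta-\xi)\cdot\nabla)(w+\zeta) + ((v+\xi)\cdot\nabla)(\zeta-\xi)\bigr].
\]
The transport-type term $((v+\xi)\cdot\nabla)\delta$ loses one derivative on $\delta$, so the natural energy norm is $H^{k-1}$, which is still a Banach algebra for $k\geq 3$. Pairing with $A_0^{k-1}\delta$, exploiting divergence-free cancellation on the top-order term and the algebra property on the remaining ones, one obtains
\[
\tfrac{\mathrm{d}}{\mathrm{d}t}\|\delta\|_{k-1}^2 \leq \alpha(t)\,\|\delta\|_{k-1}^2 + \beta(t)\,\|\delta\|_{k-1} + 2\|u-g\|_{k-1}\|\delta\|_{k-1},
\]
with $\alpha(t) \leq C\bigl(\|w\|_k + \|v\|_k + \|\zeta\|_{k+1} + \|\xi\|_{k+1}\bigr)$ and $\beta(t) \leq C\bigl(\|w\|_k + \|v\|_k\bigr)\|\zeta-\xi\|_k$, both in $L^1(J_T)$ under the a priori bound $M$ from the previous step. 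Gr\"onwall then delivers (ii), with the $\|w_0-v_0\|_{k-1}$, $\|\zeta-\xi\|_{L^2(J_T;H^k_\sigma)}$, and $\|u-g\|_{L^1(J_T;H^{k-1}_\sigma)}$ contributions combining exactly as stated. Uniqueness in the existence step is the special case $w_0=v_0$, $\zeta=\xi$, $u=g$, and continuity (i) follows either by interpolating (ii) with the uniform $H^k$ bound from step one, or by rerunning the energy estimate on mollified data and passing to the limit.

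The one genuinely delicate point is the loss of one derivative in the Euler nonlinearity. It forces (ii) to be stated in $H^{k-1}$ rather than in the solution space itself, and is also the reason the $H^k$ energy estimate does not close globally on its own---the cubic term $\|w\|_k^3$ (implicit in the $\|\nabla w\|_{L^\infty}\|w\|_k^2$ bound) must be absorbed using the two-dimensional vorticity-transport $L^\infty$ control, a genuinely 2D mechanism without which one would obtain only local-in-time existence. The $\zeta$-dependent lower-order perturbations are handled routinely once one keeps careful track of which norms are bounded pointwise in time and which are only square-integrable.
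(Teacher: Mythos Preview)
The paper does not actually prove this proposition; it states that the result ``can be proved by applying the methods of the references \cites{Wol33,Gol66,McG67,Kato67,Kato-72,T23}'' and points to Theorem~2.1 in~\cite{nersisyan-2010} for a 3D analogue. Your sketch is precisely the classical programme one would extract from those references: Kato--Ponce commutator estimates for the $H^k$ energy, the 2D vorticity transport mechanism to turn local into global existence, and a difference estimate one derivative lower for the Lipschitz bound. So the overall approach is correct and matches what the paper is alluding to.

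Two small points are worth tightening. First, the vorticity step is not quite closed as written: your forcing term $-(w+\zeta)\cdot\nabla\omega_\zeta$ still contains $\|w\|_{L^\infty}$, so integrating along characteristics does not give an $L^\infty$ bound on $\omega$ independent of $w$. One standard fix is to estimate $\|\omega\|_{L^p}$ for some fixed $p>2$ instead; the transport inequality $\tfrac{\mathrm d}{\mathrm dt}\|\omega\|_{L^p}\le C(\|w\|_{L^\infty}+\|\zeta\|_{L^\infty})\|\nabla\omega_\zeta\|_{L^p}+\|\nabla\wedge(h+u)\|_{L^p}$ together with the 2D Biot--Savart bound $\|w\|_{L^\infty}\le C\|\omega\|_{L^p}$ is now \emph{linear} in $\|\omega\|_{L^p}$ and closes by Gr\"onwall; the BKM argument then finishes the $H^k$ estimate as you say. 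Second, the first alternative you give for (i)---interpolating (ii) against the uniform $H^k$ bound---only yields convergence in $C(J_T;H^{k-\varepsilon}_\sigma)$, not in $C(J_T;H^k_\sigma)$; the mollified-data (Bona--Smith) route you mention is the one that actually reaches the endpoint.
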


For $T > 0$, let ${\cal R}_T$ denote the operator $(w_0,\zeta, u) \mapsto {\cal R}_T(w_0,\zeta, u)=w(T,\cdot)$,
where $w$ solves \eqref{E:2.3b_bis} and~\eqref{E:2.3b_init_bis} with initial state $w_0$ and forces $(\zeta, u)$. Moreover, we introduce the resolving operator of the problem \eqref{a_doua_advectie} and \eqref{CI_prima} by
\begin{align*}
	\Psi \colon H^{k+2}\times C(J_T;H^{k+2}_\sigma)&\to C(J_T;H^{k+2})\cap L^2(J_T;H^{k+3})\cap H^1(J_T;H^{k+1}) \\
	(f,v)&\mapsto \varphi_v,
\end{align*} 
which is well-defined thanks to Proposition~\ref{th_exist_unique}.

\subsubsection*{An auxiliary result}
The proof of \Cref{T.2.1} uses the following auxiliary result which states that any trajectory produced by a control $u$ in $E_1=\mathcal{F}(E)$ can be approximately tracked using a control $u$ in $E$. Its proof is given in \Cref{subsection:proofauxthm} below.

\begin{theorem}\label{T.reduct}  Assume that~$E$ is a finite-dimensional subspace of~$H_\sigma^{k+4}$ and let $\{ E_j \}_{j\in \mathbb{N}\cup\{0,\infty\}}$ be the sequence of subspaces defined in
	formula \eqref{spatii_stranii}. Then, for any $\varepsilon>0$, $R>0$, $w_0\in H^{k+2}_\sigma$, and $u_1\in L^1(J_T; E_1)$, there exists $u \in C^\infty(J_T;E)$ such that
	\begin{gather*}
		\|{\cal R}_T(w_0,0,u_1) - {\cal R}_T(w_0,0,u) \|_{k+2}+|||  {{\cal R}(w_0,0,u_1)} -  {{\cal R}(w_0,0,u)}|||_{T,k} \\
		\quad +\|\Psi(f,{\cal R}(w_0,0,u_1)) - \Psi(f,{\cal R}(w_0,0,u)) \|_{C(J_T;H^k)}<\varepsilon
	\end{gather*}
	uniformly with respect to $f \in H^{k+2}$ with $\|f\|_{k+2}\leqslant R$.
\end{theorem}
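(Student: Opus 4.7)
The strategy is a single Agrachev--Sarychev convexification, reducing one saturation level from $E_1=\mathcal{F}(E)$ down to $E$. Using finite-dimensionality of $E_1$ and the definition of $\mathcal{F}(E)$, after picking a basis of $E_1$ and expanding $u_1$, I may write
$$u_1(t)=\eta(t)-\sum_{i=1}^{p}\alpha_i(t)\,N(\zeta^i),$$
with $\eta\in L^1(J_T;E)$, $\alpha_i\in L^1(J_T;\mathbb{R}_{\ge 0})$, and $\zeta^i\in E$ fixed; by density I additionally assume $\eta$ and $\alpha_i$ are smooth. Fix distinct positive frequencies $\omega_1,\ldots,\omega_p$ avoiding the resonances $\omega_i=\pm\omega_j$ for $i\ne j$, set $a_{\epsilon,i}(t)=\sqrt{2\alpha_i(t)}\,\sin(\omega_i t/\epsilon)$, and define the $E$-valued control
$$u_\epsilon(t)=\eta(t)+\sum_{i=1}^p\dot a_{\epsilon,i}(t)\,\zeta^i\in C^\infty(J_T;E).$$

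Substituting the ansatz $v_\epsilon(t)=w_\epsilon(t)+\sum_i a_{\epsilon,i}(t)\zeta^i$ into \eqref{E:2.3}--\eqref{E:2.2_bis}, the large term $\dot a_{\epsilon,i}\zeta^i$ cancels against the control, leaving
$$\partial_t w_\epsilon+N(w_\epsilon)=h+\eta-\sum_{i,j}a_{\epsilon,i}a_{\epsilon,j}\,Q(\zeta^i,\zeta^j)-\sum_{i}a_{\epsilon,i}\bigl[Q(w_\epsilon,\zeta^i)+Q(\zeta^i,w_\epsilon)\bigr],$$
where $Q(a,b)=\Pi[(a\cdot\nabla)b]$ and $w_\epsilon(0)=w_0$. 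Elementary product-to-sum identities yield $a_{\epsilon,i}a_{\epsilon,j}=\alpha_i\delta_{ij}+\epsilon\,\partial_t R^{ij}_\epsilon$ and $a_{\epsilon,i}=\epsilon\,\partial_t S^i_\epsilon$ with $R^{ij}_\epsilon,S^i_\epsilon$ uniformly bounded. Hence the oscillatory part of the right-hand side splits as $\partial_t G_\epsilon+\rho_\epsilon$, with $\|G_\epsilon\|_{L^\infty(J_T;H^{k+2}_\sigma)}=O(\epsilon)$ (using $E\subset H^{k+4}_\sigma$, so $Q(\zeta^i,\zeta^j)\in H^{k+3}_\sigma$) and $\rho_\epsilon$ small in $L^1(J_T;H^{k+2}_\sigma)$, while the non-oscillatory part reconstructs $-\sum_i\alpha_i N(\zeta^i)$. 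Formally, $w_\epsilon$ converges to the target $v_1=\mathcal{R}(w_0,0,u_1)$ satisfying $\partial_t v_1+N(v_1)=h+u_1$.

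To turn this formal picture into the three convergences asserted in the statement, I would proceed as follows. A standard Euler energy estimate in $H^{k+2}$, using the algebra property and the two derivatives of slack provided by $E\subset H^{k+4}_\sigma$, gives uniform $C(J_T;H^{k+2}_\sigma)$ bounds on $v_\epsilon$ and on $v_1$. Setting $\tilde w_\epsilon=w_\epsilon-v_1-G_\epsilon$, one reduces to a perturbation of the linearized Euler equation with $O(\epsilon)$ initial data and small $L^1$-forcing; Proposition~\ref{P:4.1}(ii) combined with Gr\"onwall then delivers $|||w_\epsilon-v_1|||_{T,k}\to 0$, and the explicit estimate $|||\sum_i a_{\epsilon,i}\zeta^i|||_{T,k}=O(\epsilon)$ yields $|||v_\epsilon-v_1|||_{T,k}\to 0$. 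Choosing $\epsilon=T/M$ with $M$ a common multiple of $\omega_i/(2\pi)$ (or multiplying each $\sin$ profile by a smooth cutoff vanishing near $T$) forces $a_{\epsilon,i}(T)=0$, so $v_\epsilon(T)=w_\epsilon(T)$; rerunning the same $G_\epsilon$-decomposition at the $H^{k+2}$-level gives $\|v_\epsilon(T)-v_1(T)\|_{k+2}\to 0$. The $C(J_T;H^k)$-convergence of the advected scalars, uniform in $\|f\|_{k+2}\le R$, is then immediate from Theorem~\ref{L:1}. The main obstacle is precisely this terminal $H^{k+2}$-bound: the relaxation norm absorbs fast oscillations by definition, but tracking at a single time in a high Sobolev norm requires the oscillatory remainders to be exact time derivatives of $H^{k+2}$-small functions---a cancellation tied both to the explicit sinusoidal design of the $a_{\epsilon,i}$ and to the extra regularity $E\subset H^{k+4}_\sigma$ needed to make the cross terms $Q(w_\epsilon,\zeta^i)$ admissible in the Euler energy identity.
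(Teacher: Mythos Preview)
Your scheme is a valid variant of the paper's Agrachev--Sarychev convexification, and the core mechanism is the same: a fast-oscillating $E$-valued shift $\zeta_\epsilon$ is absorbed into the control via $u_\epsilon=\eta+\partial_t\zeta_\epsilon$, so that the quadratic term $N(w+\zeta_\epsilon)$ time-averages to $N(w)+\sum_i\alpha_iN(\zeta^i)$. The paper differs in implementation in two ways. First, instead of sinusoidal profiles it takes $\zeta_n(t)=\zeta(nt/T)$ piecewise constant, cycling through the symmetrized list $\zeta^1,\ldots,\zeta^p,-\zeta^1,\ldots,-\zeta^p$; the $\pm$ pairing makes the cross terms $Q(w,\zeta^j)+Q(\zeta^j,w)$ vanish \emph{identically} in the averaged equation, so nothing like your non-resonance condition or product-to-sum manipulation is needed. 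Second, the paper isolates the averaging step as a separate statement (Proposition~\ref{P.2}) about the extended system $\dot w+N(w+\zeta)=h+u$, and only afterwards passes to $\hat u_n=u_n+\partial_t\hat\zeta_n$---the same identity you use, but applied at the end.

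Where your sketch is thin is exactly the point you flag: the terminal $H^{k+2}$ bound. Your cross terms $a_{\epsilon,i}Q(\zeta^i,w_\epsilon)$ involve $\nabla w_\epsilon$ and hence live only in $H^{k+1}$, so when you integrate by parts in time to extract $\partial_tG_\epsilon$, the function $G_\epsilon$ you get is a priori only $O(\epsilon)$ in $L^\infty(J_T;H^{k+1}_\sigma)$, not $H^{k+2}_\sigma$; moreover $G_\epsilon$ depends on the unknown $w_\epsilon$ itself, which makes the substitution $\tilde w_\epsilon=w_\epsilon-v_1-G_\epsilon$ circular at the top regularity. The paper circumvents both issues: it first assumes $w_0\in H^{k+4}_\sigma$ (recovering the general case afterwards by approximation and Proposition~\ref{P:4.1}), and it writes the remainder $g_n(t)=N(w_1+\zeta_n)-\frac1m\sum_jN(w_1+\zeta^j)$ in terms of the \emph{known} target $w_1\in C(J_T;H^{k+4}_\sigma)$, so that $Kg_n\to0$ in $C(J_T;H^{k+3}_\sigma)$ by a clean Arzel\`a--Ascoli argument and Proposition~\ref{P:4.1}(ii) then gives the $C(J_T;H^{k+2}_\sigma)$ convergence directly. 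Your sinusoidal construction can be pushed through, but to close the terminal estimate you should mimic these two moves (regularity bootstrap plus formulating the oscillatory remainder with $v_1$ rather than $w_\epsilon$); minor nuisances such as the smoothness of $\sqrt{\alpha_i}$ when $\alpha_i$ vanishes, and the nonnegativity of $\alpha_i$, are also easiest to dispose of by first reducing to piecewise constant $u_1$, exactly as the paper does.
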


Taking \Cref{T.reduct} for granted, \Cref{T.2.1} is now obtained by developing the arguments of the papers \cites{AS-05,AS-2006,shirikyan-cmp2006, shirikyan-ihp-2007, nersisyan-2010, Ners-2015}.

\begin{proof}[Proof of Theorem \ref{T.2.1}]
	For any $\psi\in  W^{1,1}(J_T;H^{k+2}_\sigma)\cap L^1(J_T;H^{k+3}_\sigma)$, the control $u_*$ defined by 
	$$
	u_* \coloneqq \dot \psi+N(\psi)-h
	$$ belongs to $L^1(J_T;H^{k+2}_\sigma)$ and that $\psi={\cal R}(w_0,0,u_*)$, where $w_0=\psi(0)$. Since $E_\infty$ is dense in $H^{k+2}_\sigma$, we deduce that
	$$ \|\mathrm{P}_{E_N} u_{*} - u_{*}\|_{L^1(J_T;H^{k+2}_\sigma)} \rightarrow 0 \,\, \text{as}\,\,N \rightarrow \infty,$$
	where $\mathrm{P}_{E_N}$ is the orthogonal projection onto $E_N$ in $H_{\sigma}^{k+2}$.
	From Proposition~\ref{P:4.1} it follows that      
	$$
	\|  {{\cal R}(w_0,0,\mathrm{P}_{E_N} u_* )} - \psi \|_{C(J_T;H^{k+2}_\sigma)}  \rightarrow 0 \,\, \text{as}\,\,N \rightarrow \infty.
	$$
	This and Theorem~\ref{L:1} imply that 
	\begin{align*} 
		& \|{\cal R}_T(w_0,0,\mathrm{P}_{E_N} u_*)-\psi(T) \|_{k+2}  + |||{{\cal R}(w_0,0,\mathrm{P}_{E_N} u_*)} - \psi |||_{T,k}\nonumber \\
		& \quad +\|\Psi(f,{\cal R}(w_0,0,\mathrm{P}_{E_N} u_*))-  \Psi(f,\psi) \|_{C(J_T;H^k)}\to0\,\, \text{as}\,\,N \rightarrow \infty
	\end{align*}
	uniformly with respect to $f\in H^{k+2}$ with $\|f\|_{k+2}\leqslant R$. Therefore, for any $ \varepsilon>0$ and sufficiently large $N\geqslant 1$, 
	\begin{gather}\label{eq:target_N}
		\|{\cal R}_T(w_0,0,\mathrm{P}_{E_N} u_*) - \psi(T) \|_{k+2}  + |||  {{\cal R}(w_0,0,\mathrm{P}_{E_N} u_*)} - \psi |||_{T,k}\nonumber\\
		\quad +\|\Psi(f,{\cal R}(w_0,0,\mathrm{P}_{E_N} u_*))-  \Psi(f,\psi) \|_{C(J_T;H^k)}< \varepsilon
	\end{gather}
	for any $f\in H^{k+2}$ with $\|f\|_{k+2}\leqslant R$. Now, applying Theorem~\ref{T.reduct} with $\varepsilon$ replaced by $\varepsilon/N$, we find $u_{N-1}\in C^{\infty}(J_T;E_{N-1})$, where $\{ E_j \}_{j\in \mathbb{N}\cup \{ 0,\infty \}}$ is defined in~\eqref{spatii_stranii}, such that
	\begin{align}
		& \|{\cal R}_T(w_0,0,\mathrm{P}_{E_N} u_*)  - {\cal R}_T(w_0,0,u_{N-1}) \|_{k+2} \nonumber\\ 
		& \quad \quad+ |||  {{\cal R}(w_0,0,\mathrm{P}_{E_N} u_*)} - {{\cal R}(w_0,0,u_{N-1})} |||_{T,k}\nonumber\\ 
		& \quad\quad +\|\Psi(f,{\cal R}(w_0,0,\mathrm{P}_{E_N} u_*))-\Psi(f,{\cal R}(w_0,0,u_{N-1}))\|_{C(J_T;H^k)}< \varepsilon/N.\label{eq:N_Nminus1}
	\end{align}
	Similarly, there exists $u_j \in C^{\infty}(J_T;E_j)$ ($j=0,1,\dotsc,N-2$) such that
	\begin{gather}
		\|{\cal R}_T(w_0,0,u_{j+1}) - {\cal R}_T(w_0,0,u_{j}) \|_{k+2}  + |||  {{\cal R}(w_0,0,u_{j+1})} - {{\cal R}(w_0,0,u_{j})} |||_{T,k} \nonumber \\
		\quad +\|\Psi(f,{\cal R}(w_0,0,u_{j+1}))-\Psi(f,{\cal R}(w_0,0,u_{j}))\|_{C(J_T;H^k)}< \varepsilon/N. \label{eq:jplus1_j}
	\end{gather}
	Then combining~\eqref{eq:target_N}--\eqref{eq:jplus1_j}, we obtain
	\begin{gather*}
		\|{\cal R}_T(w_0,0,u_{0}) - \psi(T) \|_{k+2}  + |||  {{\cal R}(w_0,0,u_{0})} - \psi |||_{T,k}\nonumber\\
		\quad +\|\Psi(f,{\cal R}(w_0,0,u_0))-\Psi(f,\psi)\|_{C(J_T;H^k)}< 2\varepsilon.
	\end{gather*}
	This shows that the conclusion of Theorem~\ref{T.2.1} holds with the control
	$$
	u \coloneqq u_0 \in C^{\infty}(J_T;E) \cong C^{\infty}(J_T;\mathbb{R}^m)
	$$
	and the corresponding solution $v={\cal R}(w_0,0,u_{0})$.
\end{proof}

\subsection{Proof of Theorem \ref{T.reduct}}\label{subsection:proofauxthm}

Here we establish Theorem \ref{T.reduct}, which was an essential ingredient of the proof of Theorem \ref{T.2.1}. More precisely, we derive Theorem \ref{T.reduct} from the following proposition, which will be proven in the next subsection.

\begin{proposition}\label{P.2}
	For any piecewise constant function  $u_1 \colon J_T\to E_1$, there is a sequence $\{(u_n,\zeta_ n)\}_{n\in \mathbb{N}} \subset   C^\infty(J_T;E\times
	E)$ such that 
	\begin{equation} 
		\sup_{n\in \mathbb{N}}\left(\| \zeta_n\|_{C(J_T; H^{k+2}_\sigma)}+\|u_n\|_{L^2(J_T; H^{k+2}_\sigma)}\right) < \infty\label{E:3.1}
	\end{equation}
	and
	\begin{equation}\label{E:3.2}
		\| {\cal R}(w_0,0,u_1)- {\cal R}(w_0, \zeta_n,u_n)\|_{C(J_T; H^{k+2}_\sigma)}+  |||\zeta_n|||_{T,k}  \to 0    \,\,\text{as $n\to\infty$}  
	\end{equation}for any $w_0\in H^{k+2}_\sigma$.
\end{proposition}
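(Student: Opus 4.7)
The plan is to adapt the Agrachev--Sarychev convexification scheme: the extra directions in $E_1=\mathcal{F}(E)$ beyond $E$ are manufactured by letting the quadratic nonlinearity $N$ act on rapid oscillations of $\zeta_n$ along vectors $\zeta^i\in E$. Since $u_1$ is piecewise constant in $t$, it suffices---by running the argument on each interval of constancy and mollifying near the partition points to enforce the required $C^\infty$-regularity---to treat the model case in which $u_1\equiv \eta-\sum_{i=1}^{p} N(\zeta^i)$ on all of $J_T$ for fixed $\eta,\zeta^1,\ldots,\zeta^p\in E$.

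\textbf{Construction of $(u_n,\zeta_n)$.} Pick smooth $1$-periodic scalar functions $\lambda_1,\ldots,\lambda_p$ satisfying
\[
\int_0^1 \lambda_i(\tau)\,\diff{\tau}=0,\qquad \int_0^1 \lambda_i(\tau)\lambda_j(\tau)\,\diff{\tau}=\delta_{ij}\qquad (1\leqslant i,j\leqslant p),
\]
which can be arranged with suitably translated trigonometric monomials of distinct frequencies, and set
\[
\zeta_n(t)\coloneqq \sum_{i=1}^{p}\lambda_i(nt)\,\zeta^i,\qquad u_n(t)\coloneqq \eta \qquad (t\in J_T).
\]
Both are smooth and take values in $E$, so $(u_n,\zeta_n)\in C^\infty(J_T;E\times E)$, and they are uniformly bounded in the norms appearing in \eqref{E:3.1}. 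The substitution $\tau=ns$ together with $1$-periodicity of the primitive $\Lambda_i$ of $\lambda_i$ yields $\|\int_0^t\zeta_n(s)\,\diff{s}\|_k=O(n^{-1})$ uniformly in $t$, so $|||\zeta_n|||_{T,k}\to 0$.

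\textbf{Comparison of the trajectories.} Let $B(v,\xi)\coloneqq \tfrac{1}{2}\Pi\bigl[(v\cdot\nabla)\xi+(\xi\cdot\nabla)v\bigr]$ be the symmetric polarization of $N$, so that $N(w_n+\zeta_n)=N(w_n)+2B(w_n,\zeta_n)+N(\zeta_n)$. Setting $w\coloneqq \mathcal{R}(w_0,0,u_1)$ and $w_n\coloneqq \mathcal{R}(w_0,\zeta_n,u_n)$,
\[
\dot w+N(w)=h+\eta-\sum_{i=1}^{p} N(\zeta^i),\qquad \dot w_n+N(w_n)=h+\eta-2B(w_n,\zeta_n)-N(\zeta_n).
\]
Two averaging estimates, uniform in $t\in J_T$, drive the argument: expanding $N(\zeta_n)=\sum_{i,j}\lambda_i(n\cdot)\lambda_j(n\cdot)B(\zeta^i,\zeta^j)$ and using $\int_0^t \lambda_i(ns)\lambda_j(ns)\,\diff{s}=t\delta_{ij}+O(n^{-1})$ gives
\[
\Bigl\|\int_0^t\Bigl(N(\zeta_n(s))-\sum_{i=1}^{p} N(\zeta^i)\Bigr)\diff{s}\Bigr\|_{k+3}=O(n^{-1}),
\]
while an integration by parts in $s$---using that the primitive of $\zeta_n$ is $O(n^{-1})$ and that $\dot w_n$ is uniformly bounded in a suitable $H^{\ell}_\sigma$ (its equation has a right-hand side bounded there thanks to the two extra derivatives $E\subset H^{k+4}_\sigma$)---delivers an analogous $O(n^{-1})$ bound for $\int_0^t B(w_n(s),\zeta_n(s))\,\diff{s}$. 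Subtracting the two equations, integrating in time, and running a Gronwall estimate on the integral equation for $\Delta_n\coloneqq w_n-w$, combined with the local Lipschitz character of $N$ on bounded sets of $H^{k+2}_\sigma$ (which for $k\geqslant 2$ is a multiplier algebra), yields the convergence $\|\Delta_n\|_{C(J_T;H^{k+2}_\sigma)}\to 0$ required in \eqref{E:3.2}.

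\textbf{Main obstacle.} The delicate point is to close the Gronwall step at the full regularity $H^{k+2}_\sigma$ demanded by the statement. The Lipschitz estimate of \Cref{P:4.1}\,(ii) controls differences only in $H^{k+1}_\sigma$ and, crucially, requires smallness of $\zeta_n$ in $L^2(J_T;H^{k+2}_\sigma)$---a norm in which the oscillating $\zeta_n$ emphatically does \emph{not} converge to zero; only its time-antiderivative is small, and only in the relaxation norm. The comparison must therefore be carried out by direct energy estimates that extract smallness exclusively from the oscillatory cancellation encoded in the integration-by-parts identity for $B(w_n,\zeta_n)$ and from the $L^2$-orthonormality of the $\lambda_i$. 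Balancing the derivatives lost at each step (the polarization $B$ costs one derivative, the integration by parts trades $\dot w_n$ against $w_n$, and the Gronwall closure must be done in a sufficiently strong norm to survive the nonlinear feedback through $N(w_n)-N(w)$) is what dictates, and is accommodated by, the regularity hypothesis $E\subset H_{\sigma}^{k+4}$.
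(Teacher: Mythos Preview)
Your convexification strategy is the right circle of ideas, but the argument has a genuine gap precisely where you flag the ``main obstacle,'' and your final paragraph is a wish rather than a proof. The integral Gronwall you propose for $\Delta_n=w_n-w$ in $H^{k+2}_\sigma$ requires a pointwise-in-time bound on $\|N(w_n)-N(w)\|_{k+2}$, and since $N$ costs one derivative this forces $\|\Delta_n\|_{k+3}$ into the estimate; you cannot close. A differential (energy) Gronwall would handle $N(w_n)-N(w)$ via commutator estimates, but then the forcing $2B(w_n,\zeta_n)+N(\zeta_n)-\sum_iN(\zeta^i)$ is \emph{not} small pointwise in $t$---only its time integral is---so that route fails too. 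Your integration-by-parts trick for $\int_0^tB(w_n,\zeta_n)$ produces $\int_0^tB(\dot w_n,Z_n)$, and since $\dot w_n$ lives only in $H^{k+1}_\sigma$ (or $H^{k+2}_\sigma$ at best, if you first assume $w_0\in H^{k+3}_\sigma$), the term $(Z_n\cdot\nabla)\dot w_n$ cannot be placed in $H^{k+2}$; the extra smoothness of $Z_n\in H^{k+4}_\sigma$ does not compensate for the low regularity of the other factor.

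The paper avoids this regularity trap by a different mechanism. First, it symmetrizes: with $m=2p$ and $\zeta^{i+p}=-\zeta^i$, the identity $N(w)-u_1=\frac1m\sum_{j=1}^mN(w+\zeta^j)-u$ holds \emph{for every} $w$, so the reference solution $w_1=\mathcal{R}(w_0,0,u_1)$ itself satisfies $\dot w_1+\frac1m\sum_jN(w_1+\zeta^j)=h+u$. Second, with $\zeta_n$ piecewise constant cycling through $\zeta^1,\dots,\zeta^m$, the defect $g_n(t)=N(w_1+\zeta_n)-\frac1m\sum_jN(w_1+\zeta^j)$ has $Kg_n\to0$ in $C(J_T;H^{k+3}_\sigma)$ by Arzel\`a--Ascoli, and $v_n\coloneqq w_1-Kg_n$ is an \emph{exact} solution: $v_n=\mathcal{R}(w_0,\zeta_n+Kg_n,u)$. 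The comparison is then between $\mathcal{R}(w_0,\zeta_n+Kg_n,u)$ and $\mathcal{R}(w_0,\zeta_n,u)$, where the difference in the $\zeta$-slot is the small $Kg_n$ rather than the non-small $\zeta_n$; now \Cref{P:4.1}(ii), applied at one higher regularity level (first assuming $w_0\in H^{k+4}_\sigma$, then approximating), yields convergence in $C(J_T;H^{k+2}_\sigma)$ without any direct Gronwall on $\Delta_n$. The key idea you are missing is this algebraic rewriting that transfers the smallness from the oscillating $\zeta_n$ to the genuinely small $Kg_n$ before invoking any stability estimate.
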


\begin{proof}[Proof of Theorem \ref{T.reduct}] 
	Thanks to Proposition \ref{P:4.1}, it suffices to prove that the conclusion of Theorem \ref{T.reduct}
	holds for $u_1$ piecewise constant. In this case we can apply Proposition \ref{P.2} to obtain the existence of a 
	sequence $\{(u_n,\zeta_ n)\}_{n\in \mathbb{N}} \subset C^\infty(J_T;E\times
	E)$  satisfying \eqref{E:3.1} and \eqref{E:3.2}. 
	
	Using Proposition~\ref{P:4.1}, we can find $\hat \zeta_n \in C^\infty(J_T;E)$ such that $\hat\zeta_n(0)=\hat\zeta_n(T)=0$ and 
	\begin{gather}
		\|\zeta_n-\hat\zeta_n\|_{L^2(J_T;H^{k+3}_\sigma)} \rightarrow 0 \,\, \text {as} \,\,
		n\rightarrow \infty,\label{E:3.3}\\   \sup_{n\in \mathbb{N}}\| \hat\zeta_n\|_{C(J_T;H^{k+2}_\sigma)} < \infty,\label{E:3.4}\\
		\| {\cal R}(w_0, \zeta_n,u_n)-{\cal R}(w_0, \hat \zeta_n,u_n)\|_{C(J_T;H^{k+2}_\sigma)}  \to 0 \, \, \text {as $n\to\infty$.} \label{E:3.6}
	\end{gather}
	Then \eqref{E:3.2} and \eqref{E:3.3} imply that
	\begin{align}\label{E:3.5}
		||| \hat \zeta_ n |||_{ T,  k}& \leqslant   ||| \hat \zeta_ n-\zeta_ n |||_{ T,  k}+|||   \zeta_ n |||_{ T,  k}  \nonumber \\
		&  \leqslant \int_0^T \|\hat \zeta_ n(s)-\zeta_ n(s)\|_k \, \diff{s}+ |||   \zeta_ n |||_{T,k}   \to 0  \, \, \textup{as $n\to \infty$.}
	\end{align}
	Note that    
	\begin{align}\label{E:3.7}
		{\cal R}_t(w_0, \hat \zeta_n,u_n)&={\cal R}_t(w_0,0,\hat u_n)-\hat \zeta_n(t)  \quad (t\in J_T),\\
		{\cal R}_T(w_0, \hat \zeta_n,u_n)&={\cal R}_T(w_0,0, \hat u_n),\label{E:3.8}
	\end{align} where
	$\hat u_n \coloneqq u_n+\partial_t 
	{\hat \zeta}_n$.   From \eqref{E:3.2}, \eqref{E:3.6}, and \eqref{E:3.8} it follows  that 
	$$
	\| {\cal R}_T(w_0,0,u_1)-{\cal R}_T(w_0,0,\hat u_n)\|_{k+2}  \to 0 \, \, \text {as $n\to\infty$.
	}   $$  
	Using \eqref{E:3.2}, \eqref{E:3.6}--\eqref{E:3.7}, we obtain
	\begin{align}\label{E:rnl}
		||| {\cal R}(w_0, 0,u_1)-{\cal R}(w_0,0,\hat u_n)|||_{T,k}
		& \leqslant  T \|{\cal R}(w_0,0,u_1)-{\cal R}(w_0, \zeta_n,u_n)\|_{C(J_T; H^k_\sigma)}\nonumber\\
		& \quad +T\| {\cal R}(w_0, \zeta_n, u_n)-{\cal R}(w_0, \hat\zeta_n,u_n)\|_{C(J_T; H^k_\sigma)}\nonumber\\
		& \quad +|||   {{\cal R}(w_0, \hat \zeta_n,u_n)} -  {{\cal R}(w_0,0,\hat u_n)}|||_{ T,  k}\nonumber\\&
		\to 0 \, \, \text{as $n\to \infty$}.
	\end{align} 
	From \eqref{E:3.2}, \eqref{E:3.4}, \eqref{E:3.6}, and \eqref{E:3.7} it follows that $\{{\cal R}(w_0,0,\hat u_n)\}_{n\geqslant 1}$ is bounded in $C(J_T;H^{k+2}_\sigma)$. 
	Combining this with \eqref{E:rnl} and Theorem \ref{L:1}, we~get that 
	$$   \|\Psi(f,{\cal R}(w_0, 0,u_1)) - \Psi(f,{\cal R}(w_0,0,\hat u_n)) \|_{C(J_T;H^k)}\to 0 \, \, \text{as $n\to \infty$}
	$$uniformly with respect to $f \in H^{k+2}$ with $\|f\|_{k+2}\leqslant R$.
	This ends the proof of Theorem~\ref{T.reduct}. 
\end{proof}

\subsection{Proof of Proposition \ref{P.2}}
In this subsection, we obtain \Cref{P.2}, which is the missing link in the proof of \Cref{T.reduct}.

\begin{proof}[Proof of Proposition \ref{P.2}] {\it Step~1}.~Without loss of generality, we can assume that~$u_1\in E_1$ is constant. Indeed, the general case can be obtained by successively applying the result to each interval of constancy, approximating the resulting controls with smooth ones, and using Proposition \ref{P:4.1}. Furthermore, we will first assume that the initial data is more regular, that is,~$w_0\in H^{k+4}_\sigma$.
	
	By the definition of $E_1={\cal F}(E)$  in \eqref{spatii_stranii}, any $u_1\in E_1$ can be represented in the form
	$$
	u_1=u-\sum_{i=1}^p N(\xi^i)
	$$ for some integer $p\geqslant 1$ and vectors $\xi^1,\ldots,\xi^p, u \in E$. Choosing $m = 2p$ and setting
	$$
	\zeta^i \coloneqq \sqrt{\frac{m}{2}}\xi^i, \quad   \zeta^{i+p} \coloneqq -\sqrt{\frac{m}{2}}\xi^i, \quad  i = 1,
	\ldots,p,
	$$ it is easy to see that  
	$$
	N(w)-u_1=\frac1m \sum_{j=1}^m N(w+\zeta^j)-u \qquad
	(w \in H^{k+4}_\sigma).
	$$
	Then  $w_1 \coloneqq {\cal R}(w_0,0,u_1)\in C(J_T;H^{k+4}_\sigma)$   satisfies the following  equation
	\begin{equation}\label{E:3.9}
		\dot{w}_1+\frac1m\sum_{j=1}^m     N(w_1+\zeta^j)=h(t)+ u.
	\end{equation}
	Let $\zeta(t)$ be a $1$-periodic function such that
	$\zeta(s)=\zeta^j$ for  $s\in \left[ {(j-1)}/{m},  j/m\right)$ and  $j=1, \ldots, m$, and let  
	$\zeta_n(t)=\zeta(nt/T)$. 
	The equation \eqref{E:3.9} is
	equivalent to  
	\begin{equation}
		\dot w_1   + N(w_1 + \zeta_n)  = h(t) + u 
		+ g_n(t),\nonumber
	\end{equation}
	where
	$$
	g_n(t) \coloneqq N(w_1+\zeta_n)-\frac1m\sum_{j=1}^m N(w_1+\zeta^j) .
	$$
	For any $g\in L^2(J_T; \mathcal{H})$, let us set $K g(t) \coloneqq \smallint_0^t  g(s) \, \diff{s}$ and note that the difference $v_n \coloneqq w_1-Kg_n$ solves the problem
	\begin{equation}\label{eq_vn}
		\dot v_n  +N (v_n + \zeta_n + K g_n)   = h(t)
		+ u , \quad  v_n(0)=w_0.\nonumber
	\end{equation}
	Hence $v_n={\cal R}(w_0,\zeta_n+Kg_n,u)$. Next, the definition of $\zeta_n$ implies that the set $\{\zeta_n(t)\}_{t \in J_T}$ is contained in a finite
	subset of $H_\sigma^{k+4}$ not depending on $n$; therefore, 
	\begin{equation}\label{E:3.10}
		\sup_{n\geqslant 1} \| \zeta_n\|_{L^\infty(J_T;H^{k+4}_\sigma)}  < \infty.
	\end{equation}
	In Step 2, we will show that 
	\begin{equation}\label{E:3.11}
		\|K g_n \|_{C(J_T;H^{k+3}_\sigma)}\rightarrow 0 \, \, \text{as $n\to\infty$}.
	\end{equation}
	This limit implies that $\sup_{n\in \mathbb{N}} \|v_n \|_{C(J_T;H^{k+3}_\sigma)}<\infty$. Hence, by \eqref{E:3.10},~\eqref{E:3.11}, and Proposition~\ref{P:4.1},
	we have      
	$$
	\|{\cal R}(w_0,\zeta_n,u)-v_n \|_{C(J_T;H^{k+2}_\sigma)}\rightarrow 0 \, \, \text{as} \,\,
	n\rightarrow \infty.
	$$   
	On the other hand, by \eqref{E:3.11} and the fact that $v_n=w_1-Kg_n$, one has the convergence~$\|v_n-w_1\|_{C(J_T;H^{k+3}_\sigma)}\rightarrow 0$ as $n\rightarrow \infty$. As a result,
	\begin{equation}\label{eq:Step1_conclusion}
		\|{\cal R}(w_0,\zeta_n,u)-w_1 \|_{C(J_T;H^{k+2}_\sigma)}\rightarrow 0 \, \, \text{as} \,\,
		n\rightarrow \infty.
	\end{equation}
	
	\smallskip
	{\it Step~2}. In this step, we prove the limit \eqref{E:3.11}.~By an approximation argument, it suffices to consider the case where $w_1 \colon J_T\to H_\sigma^{k+4}$ is piecewise constant.

	Let us first note that the family $\{K g_n\}_{n\in \mathbb{N}}$ is relatively compact in the space $C(J_T;H_\sigma^{k+3})$.
	Indeed, as $w_1$ is piecewise constant, the set $\{g_n(t)\}_{t \in J_T}$ is contained in a finite
	subset of $H_\sigma^{k+3}$ not depending on $n$. Therefore, there is a
	compact set $G \subset H_\sigma^{k+3}$ such that
	$K g_n(t) \in G$  for any $t \in J_T$  and   $n \geqslant 1$.
	As
	\begin{equation}
		\sup_{n\geqslant 1}\| g_n\|_{C(J_T; H^{k+3}_\sigma)} < \infty,\nonumber
	\end{equation}
	the family $\{K g_n\}_{n\in \mathbb{N}}$ is uniformly equicontinuous on $J_T$.
	The Arzel\`{a}--Ascoli theorem implies that $\{K g_n\}_{n\in \mathbb{N}}$ is
	relatively compact in $C(J_T; H_\sigma^{k+3})$. Thus, the limit \eqref{E:3.11} will be
	established if we show that
	\begin{equation}\label{E:3.12}
		\|K g_n(t)\|_{ H_\sigma^{k+3}} \rightarrow 0 \, \, \text{as} \,\,
		n\rightarrow \infty \, \, \text{for
			any $t \in J_T$}.
	\end{equation}
	We first prove this convergence in the case when $w_1$ is constant:
	$w_1(t)=b\in
	H_\sigma^{k+4}$, $t\in J_T$. Let $t=t_l+\tau$, where
	$t_l=lT/n$, $l\in {\mathbb N}$, and $\tau\in [0,T/n)$. From
	the definition of $g_n$ and $\zeta_n$, we have
	$$
	\int_0^{\frac{lT}{n}}g_n(s) \, \diff{s}= \int_0^{\frac{lT}{n}} N(b +
	\zeta_n(s)) \, \diff{s}-\frac{lT}{mn}\sum_{j=1}^m  N
	(b+\zeta^j) =0,
	$$
	hence
	\begin{equation}
		K g_n(t)= \int_0^\tau
		N(b + \zeta_n(s)) \, \diff{s}-\frac{\tau}{m} \sum_{j=1}^m  N (b+\zeta^j).\nonumber
	\end{equation}
	Since  $\tau \rightarrow 0 $ as $n \rightarrow \infty $, we get \eqref{E:3.12}. In the same way, we can show 
	\eqref{E:3.12} for any piecewise constant $w_1$.

	\smallskip
	{\it Step~3}. 
	Let us   show that 
	\begin{equation}\label{E:3.13}
		||| \zeta_ n |||_{T,k+4} \to 0 \, \, \text{ as $n\to\infty$.}
	\end{equation} The argument is similar to the one in the previous step. It is enough to check that  
	\begin{enumerate}
		\item[(i)] $\{K\zeta_ n\}_{n\in \mathbb{N}}$ is relatively compact in $C(J_T; H^{k+4}_\sigma)$;
		\item[(ii)] $\|K\zeta_n(t)\|_{H^{k+4}_\sigma}\to 0$ as $n\to\infty$ for any $t\in J_T$.  
	\end{enumerate} 
	To prove (i), we use the Arzel\`{a}--Ascoli theorem.
	The set $\{\zeta_n(t)\}_{t \in J_T}$ is contained in a finite
	subset of~$H_\sigma^{k+4}$ not depending on $n$. This implies that  there is a
	compact set $F \subset H_\sigma^{k+4}$ such that
	$K \zeta_n(t) \in F$  for any $t \in J_T$  and $n \geqslant 1$.
	From~\eqref{E:3.10} it follows that 
	the sequence $ \{K\zeta_n\}_{n\in \mathbb{N}} $ is uniformly equicontinuous on $J_T$.~Thus, by the Arzel\`{a}--Ascoli theorem, $\{K\zeta_n\}_{n\in \mathbb{N}}$ is
	relatively compact in~$C(J_T; H^{k+4}_\sigma)$. 
	
	To prove (ii), we fix $t=t_l+\tau$, where
	$t_l= lT/n$, $l\in {\mathbb N}$, and $\tau\in [0,T/n)$. In view of the construction of 
	$\zeta_n$, we have that $K\zeta _n(lT/n)=0$. Combining this with~\eqref{E:3.10} and the fact that $\tau \rightarrow 0 $ as $n \rightarrow \infty$, we get (ii). This completes the proof of~\eqref{E:3.13}.

	Note that $\zeta _n$ is $E$-valued. Taking  an arbitrary sequence $\{\hat \zeta_n\}_{n\in \mathbb{N}}\subset C^\infty (J_T; E)$ such that 
	$$
	\|\zeta_n-\hat \zeta_n\|_{L^\infty(J_T;H^{k+4}_\sigma)}\to 0 \, \, \text{as $n\to \infty$},
	$$  
	we infer from~\eqref{eq:Step1_conclusion} that the conclusions of Proposition \ref{P.2} hold  for   $\{(u, \hat\zeta_n)\}_{n\in \mathbb{N}}\subset C^\infty(J_T; E\times E)$ in the case  $w_0\in H^{k+4}_\sigma$. Finally, an approximation argument, the inequality \eqref{E:3.10}, and Proposition~\ref{P:4.1} imply that the conclusions of Proposition~\ref{P.2} still hold in the case~$w_0\in H^{k+2}_\sigma$.
\end{proof}

\section{Exact controllability}\label{sec_exact_Euler}
Let $T > 0$ and fix a nonempty open set $\omegaup \subset \mathbb{T}^2$ such that $\mathbb{T}^2\setminus\omegaup$ is simply-connected. We consider the motion of a fluid with velocity $v\colon[0,T]\times\mathbb{T}^2\longrightarrow \mathbb{R}^2$ and pressure $p\colon[0,T]\times\mathbb{T}^2\longrightarrow \mathbb{R}$, satisfying the forced incompressible Euler system 
\begin{equation}\label{equation:controlled_euler}
	\begin{gathered}
		\partial_t v + (v \cdot \nabla) v + \nabla p = h + \mathbb{I}_{\omegaup} u, \quad
		\operatorname{div} v = 0, \quad
		v(0, \cdot) = v_0,
	\end{gathered}
\end{equation}
where $h$ denotes a known body force and $u$ is an interior control physically localized in~$\omegaup$.

The global exact boundary controllability of the two-dimensional incompressible Euler system has been shown for smoothly bounded domains by Coron~\cite{Coron1996Euler} using his \textit{return method} (see also Glass~\cite{Glass2000} for the three-dimensional case). Therefore, the global exact controllability of \eqref{equation:controlled_euler} is in principle known, but not explicitly documented in the literature for periodic boundary conditions and with given body force $h$. For the convenience of the reader, and still by using classical ideas from \cite{Coron1996Euler}, we give here a short proof under the simplifying assumption that $\mathbb{T}^2 \setminus \omegaup$ is simply-connected. When $\omegaup\subset \mathbb{T}^2$ is arbitrary, our short proof will not work, but the original argument from \cite{Coron1996Euler} could be directly adapted for the case $h = 0$. See also \cite{Fernandez-CaraSantosSouza2016} by Fern\'{a}ndez-Cara et al.~for related arguments for the inviscid Boussinesq system, the work \cite{rissel2024exactcontrollabilityidealmhd} by Rissel on global exact interior controllability of ideal MHD in bounded 2D domains, and \cite{CoronFursikov1996} by Fursikov and Coron for the global exact controllability to trajectories of the 2D incompressible Navier--Stokes system on manifolds without boundary.

The following local null controllability result will be proved in \Cref{subsection:proof_local} by using Coron's return method. The global exact controllability of \eqref{equation:controlled_euler} will then follow as a corollary. Throughout, for $s > 0$ we denote
\begin{equation}\label{eq:Vs}
	V_s \coloneq C([0,s]; H^1(\mathbb{T}^2; \mathbb{R}^2)) \cap L^{\infty}([0,s]; H^3(\mathbb{T}^2; \mathbb{R}^2)).
\end{equation}

\begin{theorem}\label{theorem:EulerExactLocal}
	There exists~$\delta_0 \in (0,1)$ such that, given any divergence-free initial state $v_0 \in H^3(\mathbb{T}^2; \mathbb{R}^2)$ and force
	$h \in L^2([0,1]; H^5(\mathbb{T}^2;\mathbb{R}^2))$ meeting the smallness constraint
	\begin{equation}\label{equation:smallnessconstraint}
		\|v_0\|_3 + \|h\|_{L^2([0,1]; H^5(\mathbb{T}^2;\mathbb{R}^2))} < \delta_0,
	\end{equation}
	there is a control $u \in L^2([0,1]; H^2(\mathbb{T}^2; \mathbb{R}^2))$ for which the respective solution $v \in V_1$ to the incompressible Euler system \eqref{equation:controlled_euler} with $T = 1$ satisfies $v(1, \cdot) = 0$.
\end{theorem}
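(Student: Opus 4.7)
We follow Coron's \emph{return method}. The first step is to construct a smooth reference solution $(\overline{v}, \overline{p}, \overline{u})$ of the controlled Euler system \eqref{equation:controlled_euler} (with $h = 0$), such that $\overline{v}(0, \cdot) = \overline{v}(1, \cdot) = 0$, the reference control $\overline{u}$ is supported in $\omegaup$, and, crucially, the flow map of $\overline{v}$ transports every point of $\mathbb{T}^2$ through $\omegaup$ at some time during $[0, 1]$. For sufficiently small data $(v_0, h)$ satisfying \eqref{equation:smallnessconstraint}, we then search for a solution of the null-controllability problem in the form $v = \overline{v} + w$ with $w$ small. The problem then reduces to null-controllability of the linearization of the Euler equation around $\overline{v}$, from which the nonlinear case follows by a fixed-point argument.

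For the reference trajectory, we look for $\overline{v}$ of the form $\overline{v}(t, x) = \alpha(t) \nabla^{\perp}\phi(x)$, where $\phi \in C^{\infty}(\mathbb{T}^2; \mathbb{R})$ is a stream function and $\alpha \in C_c^{\infty}((0, 1))$ is a scalar bump. We choose $\phi$ with $\Delta \phi$ compactly supported inside $\omegaup$ (so that $\overline{v}$ is curl-free outside $\omegaup$), without critical points in $\mathbb{T}^2 \setminus \omegaup$, and such that every level set of $\phi$ meets $\omegaup$; the existence of such $\phi$ is ensured by the simply-connectedness of $\mathbb{T}^2 \setminus \omegaup$ (prescribe strictly monotone boundary data on $\partial\omegaup$, extend harmonically across $\mathbb{T}^2 \setminus \omegaup$, then smoothly inside $\omegaup$). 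Since $\overline{v}$ is curl-free on the simply-connected set $\mathbb{T}^2 \setminus \omegaup$, a direct computation gives $\operatorname{curl}[\partial_t \overline{v} + (\overline{v} \cdot \nabla) \overline{v}] = 0$ there, so this vector field is the gradient of a single-valued scalar $\overline{p}$ on that set; extending $\overline{p}$ smoothly to $\mathbb{T}^2$ and defining $\overline{u} \coloneqq \partial_t \overline{v} + (\overline{v} \cdot \nabla) \overline{v} + \nabla \overline{p}$ yields $\operatorname{supp} \overline{u} \subset \omegaup$ and an exact solution of \eqref{equation:controlled_euler}. Choosing $\alpha$ with sufficiently large $L^1$ norm guarantees the flow-sweep property.

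Writing $v = \overline{v} + w$, the perturbation $w$ satisfies
\begin{equation*}
	\partial_t w + (\overline{v} \cdot \nabla) w + (w \cdot \nabla) \overline{v} + (w \cdot \nabla) w + \nabla q = h + \mathbb{I}_{\omegaup}(u - \overline{u}), \quad \operatorname{div} w = 0,
\end{equation*}
with $w(0, \cdot) = v_0$ and the target $w(1, \cdot) = 0$. Dropping the quadratic term $(w \cdot \nabla) w$ gives the linearization around $\overline{v}$. Taking the scalar curl, the vorticity $\omega_w \coloneqq \operatorname{curl} w$ obeys a two-dimensional transport equation driven by $\overline{v}$, with lower-order linear terms and a source supported in $\omegaup$. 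Since the flow of $\overline{v}$ visits $\omegaup$ starting from every point of $\mathbb{T}^2$, a standard integration along characteristics produces a control supported in $\omegaup$ that annihilates $\omega_w(1, \cdot)$. We then reconstruct $w$ from $\omega_w$ via the Biot--Savart operator; the two-dimensional kernel of $\operatorname{curl}$ on $\mathbb{T}^2$ (constant vector fields) is compensated by two additional scalar degrees of freedom in the localized control. A Banach fixed-point iteration, in which the quadratic term $(w \cdot \nabla) w$ is treated as a perturbation of $h$, closes the argument within a small ball of $V_1$ thanks to the smallness assumption \eqref{equation:smallnessconstraint}.

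The principal obstacle is the design of the reference $\overline{v}$ meeting the three simultaneous requirements: (i) its flow sweeps $\mathbb{T}^2$ through $\omegaup$, (ii) its Euler "force" is a pure pressure gradient outside $\omegaup$ (so that the associated reference control is localized), and (iii) its regularity and the homogeneous boundary conditions at $t = 0, 1$ are compatible with the functional setting of \eqref{eq:Vs}. The assumption that $\mathbb{T}^2 \setminus \omegaup$ is simply-connected is essential at two points: it ensures that $\overline{v}$, being curl-free on $\mathbb{T}^2 \setminus \omegaup$, admits a single-valued stream function there, and it removes the topological obstruction to defining the reference pressure $\overline{p}$ from the Euler force on that set.
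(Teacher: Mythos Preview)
Your high-level strategy (return method, vorticity transport, fixed point) is the same as the paper's, but the reference trajectory you propose is both more complicated than needed and not quite well-defined, and your use of the simple-connectedness hypothesis does not match how it actually enters.

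The paper does \emph{not} take a spatially varying reference $\alpha(t)\nabla^\perp\phi(x)$. It uses a vector field $\overline{y}\in C_0^\infty((0,1);\mathbb{R}^2)$ that is \emph{constant in space} (Lemma~\ref{lemma:reference_trajectory}); its flow is a pure time-dependent translation, so the flushing property is arranged by covering $\mathbb{T}^2$ with finitely many translates of a ball contained in $\omegaup$. Since $(\overline{y}\cdot\nabla)\overline{y}=0$, no stream-function or pressure analysis of the reference is required. Crucially, $\overline{y}$ is \emph{not} required to be a controlled Euler trajectory with force supported in $\omegaup$: its residual $\partial_t\overline{y}$ is a nonzero space-constant vector, hence not a gradient on $\mathbb{T}^2$. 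The paper simply centers the fixed-point map on $\overline{y}$ and postpones all localization issues to the very end, where the velocity control is recovered from the vorticity control via Lemma~\ref{lemma:integrating_control} together with two curl-free profiles $\Lambda,\Sigma$ supported in $\omegaup$ whose averages span $\mathbb{R}^2$. The existence of such $\Lambda,\Sigma$ is the \emph{only} place where the hypothesis that $\mathbb{T}^2\setminus\omegaup$ is simply connected is invoked.

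Your construction of $\phi$ has a gap: since $\mathbb{T}^2\setminus\omegaup$ is a topological disk, its boundary is a single closed curve, on which ``strictly monotone boundary data'' cannot be prescribed. One can salvage the idea with data having exactly two monotone arcs (so that the harmonic extension has no interior critical points and all level arcs hit the boundary), but this is delicate compared with the translation trick above. Note also that $\nabla^\perp\phi$ always has zero spatial average on $\mathbb{T}^2$, so your reference cannot absorb the two ``mean'' degrees of freedom; these must in any case be handled by a device like the paper's $\Lambda,\Sigma$. Finally, your treatment of the body force $h$ is underspecified: in the paper it is \emph{not} just lumped with the quadratic perturbation, but cancelled by an explicit $\omegaup$-supported source $\xi_h$ in a separate transported vorticity piece $w_h$ satisfying $w_h(0,\cdot)=w_h(1,\cdot)=0$, in parallel with a partition-of-unity decomposition $\sum_i\beta_i(t)w_i$ of the initial vorticity (each $w_i$ switched off once its support has been flushed into $\omegaup$).
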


In view of known hydrodynamic scaling properties of the incompressible Euler system, the global exact controllability of \eqref{equation:controlled_euler} is now a direct consequence of \Cref{theorem:EulerExactLocal}.

\begin{theorem}\label{theorem:EulerExactGlobal}
	The system \eqref{equation:controlled_euler} is globally exactly controllable. That is, given any time $T > 0$, divergence-free states $v_0, v_T \in H^3(\mathbb{T}^2; \mathbb{R}^2)$, and a force $h \in L^2([0,T]; H^5(\mathbb{T}^2;\mathbb{R}^2))$, there is a control $u \in L^2([0,T]; H^2(\mathbb{T}^2; \mathbb{R}^2))$ for which the solution $v \in V_T$ to \eqref{equation:controlled_euler} obeys $v(T,\cdot) = v_T$.
\end{theorem}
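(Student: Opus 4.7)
The plan is to deduce \Cref{theorem:EulerExactGlobal} from \Cref{theorem:EulerExactLocal} using two classical features of the incompressible Euler system: its hydrodynamic scaling invariance and its time-reversibility. I will first upgrade the small-data local null controllability to null controllability from arbitrary $v_0 \in H^3(\mathbb{T}^2;\mathbb{R}^2)$ over any time $T > 0$, and then use time-reversal to convert this into exact controllability between any pair of prescribed endpoints.

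For the scaling step, observe that if $v$ solves \eqref{equation:controlled_euler} on $[0,\sigma]$ with force $h$, pressure $p$, and control $u$, then $w(\tau,x) \coloneqq \sigma v(\sigma\tau, x)$ solves \eqref{equation:controlled_euler} on $[0,1]$ with initial state $\sigma v_0$, force $\sigma^2 h(\sigma\tau, x)$, control $\sigma^2 u(\sigma\tau, x)$, and pressure $\sigma^2 p(\sigma\tau, x)$. A direct change of variables gives
\[
\|\sigma v_0\|_3 + \|\sigma^2 h(\sigma\cdot, \cdot)\|_{L^2([0,1]; H^5)} = \sigma \|v_0\|_3 + \sigma^{3/2}\|h\|_{L^2([0,\sigma]; H^5)},
\]
so the smallness constraint \eqref{equation:smallnessconstraint} is met as soon as $\sigma$ is small enough depending on $\|v_0\|_3$ and $\|h\|_{L^2([0,T]; H^5)}$. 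To null-control over a possibly large interval $[0,T]$, I would partition $[0,T]$ into finitely many consecutive subintervals of common length at most some $\sigma^* > 0$ chosen so that $\sigma^* \|v_0\|_3 + (\sigma^*)^{3/2}\|h\|_{L^2([0,T]; H^5)} < \delta_0$. The rescaled \Cref{theorem:EulerExactLocal} then applies on the first subinterval (steering $v_0$ to $0$), and on every subsequent subinterval the initial state is already $0$, so the even weaker condition $(\sigma^*)^{3/2}\|h\|_{L^2(I; H^5)} < \delta_0$ suffices and the local theorem again steers $0$ to $0$. Concatenating the piecewise controls produces $u \in L^2([0,T]; H^2(\mathbb{T}^2;\mathbb{R}^2))$ and $v \in V_T$ with $v(T,\cdot) = 0$.

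For the time-reversal step, a short computation confirms that $\tilde v(s,x) \coloneqq -v(T-s,x)$, paired with $\tilde p(s,x) \coloneqq p(T-s,x)$, solves \eqref{equation:controlled_euler} on $[0,T]$ with force $\tilde h(s,x) \coloneqq h(T-s,x)$ and control $\tilde u(s,x) \coloneqq u(T-s,x)$, since the two sign flips in the transport term $(v\cdot\nabla)v$ cancel. Hence steering $0$ to $v_T$ on $[T/2, T]$ under $h$ is equivalent, via this reversal, to steering $-v_T$ to $0$ on $[0, T/2]$ under the reversed force $s \mapsto h(T-s)$, which is another instance of the null controllability established above. The full control on $[0,T]$ is then obtained by concatenating a null control on $[0,T/2]$ sending $v_0$ to $0$ with the time-reversed null control on $[T/2,T]$ sending $0$ to $v_T$; continuity of the glued trajectory at $T/2$ is automatic because both one-sided limits vanish. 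The only real work beyond invoking \Cref{theorem:EulerExactLocal} is bookkeeping: verifying that the scaling preserves the $V_T$ regularity class, that a uniform $\sigma^*$ meets the smallness requirements across the partition, and that the time-reversed datum $-v_T$ together with $h(T-\cdot)$ falls within the admissible class of the first step. I expect no further obstacle.
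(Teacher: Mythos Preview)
Your proposal is correct and follows essentially the same route as the paper: both derive global exact controllability from \Cref{theorem:EulerExactLocal} via the hydrodynamic scaling $v\mapsto \sigma v(\sigma\,\cdot\,,\cdot)$ together with the time-reversal symmetry $v\mapsto -v(T-\cdot,\cdot)$ of the Euler system. The only organizational difference is that the paper lets the \emph{uncontrolled} solution run on $[0,T-2\varepsilon]$ and then applies \Cref{theorem:EulerExactLocal} on just two short intervals of length $\varepsilon$ (one forward from $\mathscr{v}(T-2\varepsilon)$ to $0$, one time-reversed from $v_T$ to $0$), whereas you partition $[0,T/2]$ into many subintervals and repeatedly null-control $0\to 0$; both variants are standard and your bookkeeping is accurate.
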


\begin{proof}
	Since the incompressible Euler system on the $2$D flat torus is well-posed (see \Cref{extremely_difficult}), we can denote by $(\mathscr{v},\mathscr{p})$ the unique solution to
	\begin{equation}\label{equation:controlled_euler_uncontrolled}
		\begin{gathered}
			\partial_t \mathscr{v} + (\mathscr{v} \cdot \nabla) \mathscr{v} + \nabla \mathscr{p} = h, \quad
			\operatorname{div} \mathscr{v} = 0, \quad
			\mathscr{v}(0, \cdot) = v_0.
		\end{gathered}
	\end{equation}
	Now, fix any number $0 < \varepsilon < T/3$ so small that
	\begin{equation*}
		\sup_{t\in[0,T]}\varepsilon \|\mathscr{v}(t)\|_3 + \varepsilon\|v_T\|_3 + \varepsilon^{3/2}\| h\|_{L^2([0,T]; H^5(\mathbb{T}^2;\mathbb{R}^2))} < \delta_0,
	\end{equation*}
	where $\delta_0 > 0$ is a small number fixed in \Cref{theorem:EulerExactLocal}. By applying \Cref{theorem:EulerExactLocal} twice, using as initial data and external forces the respective pairs
	\begin{gather*}
		\left(\varepsilon \mathscr{v}(T-2\varepsilon), t \mapsto (\varepsilon^2 h(T-2\varepsilon + \varepsilon t )\right), \quad
		\left(\varepsilon v_T, t \mapsto \varepsilon^2 h(T-\varepsilon t)\right),
	\end{gather*}
	there are corresponding controls~$\widetilde{u}$ and~$\widehat{u}$ for which the associated solutions~$\widetilde{v}$ and~$\widehat{v}$ to~\eqref{equation:controlled_euler} satisfy $\widetilde{v}(1,\cdot) = \widehat{v}(1,\cdot) = 0$ in $\mathbb{T}^2$. Finally, we obtain the desired control~$u$ and solution~$v$ by means of
	\begin{gather*}
		(v, u)(t) \coloneq \begin{cases}
			(\mathscr{v}, 0)(t) & \mbox{ if } t \in [0, T-2\varepsilon],\\
			(\varepsilon^{-1}\widetilde{v}, \varepsilon^{-2}\widetilde{u})(\varepsilon^{-1}(t-T+2\varepsilon)) & \mbox{ if } t\in [T-2\varepsilon,T-\varepsilon],\\
			(\varepsilon^{-1}\widehat{v}, \varepsilon^{-2}\widehat{u})(\varepsilon^{-1}(T-t)) & \mbox{ if } t\in [T-\varepsilon, T].
		\end{cases}
	\end{gather*}
\end{proof}
Let us remark that the regularity stated for controlled solutions in Theorems~\ref{theorem:EulerExactGlobal} and~\ref{theorem:EulerExactLocal} could be immediately improved by using the equation \eqref{equation:controlled_euler}.
\subsection{Auxiliaries}
Let $T > 0$ and $v \in C([0,T]\times\mathbb{T}^2;\mathbb{R}^2)$ be Lipschitz continuous in the space variables with time-independent Lipschitz constant. We denote by $\Phi^v\colon \mathbb{T}^2\times[0,T]^2 \longrightarrow \mathbb{T}^2$ the flow of~$v$, assigning to each $(x,s,t) \in \mathbb{T}^2\times[0,T]^2$ the solution $\Phi^v(x;s,\cdot)$ to
\begin{equation}\label{equation:flow}
	\begin{gathered}
		\frac{\diff{{}}}{\diff{t}} \Phi^v (x;s,t) = v(t, \Phi^v(x;s,t)), \quad
		\Phi^v (x;s,s) = x
	\end{gathered}
\end{equation}
with initial condition $x$ imposed at the time $s$.

The following lemma, which is essentially \cite{NersesyanRissel2024}*{Theorem 3.2}, provides a smooth and constant-in-space vector field inducing a flow that flushes information through the control region. This vector field will serve as a reference trajectory in the sense of the return method. See also Fursikov and Imanuvilov~\cite{FursikovImanuvilov1999}*{Lemma 5.1} regarding return method trajectories on $2$D and $3$D flat tori that work for general control regions but are not constant with respect to the space variables.

\begin{lemma}\label{lemma:reference_trajectory}
	Let $T > 0$ and $\widetilde{\omegaup} \subset \mathbb{T}^2$ with nonempty interior. There exists a function $\overline{y} \in C^{\infty}_0((0,T);\mathbb{R}^2)$ such that
	the flow $\Phi^{\overline{y}}$ obtained by solving \eqref{equation:flow} with $v=\overline y$ satisfies $\Phi^{\overline{y}} (x;0,T) = x$ for all $x \in \mathbb{T}^2$ and
	\begin{equation}\label{equation:flushingproperty}
		\forall  x \in \mathbb{T}^2,  \, \exists t_x \in (0,T) \colon \Phi^{\overline{y}} (x;0,t_x) \in \widetilde{\omegaup}.
	\end{equation}
\end{lemma}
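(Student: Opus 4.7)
The plan is to exploit the fact that $\overline{y}$ is space-independent, so that the flow is a uniform translation. Writing $Y(t) \coloneqq \int_0^t \overline{y}(s)\,\diff{s}$, we have
\[
\Phi^{\overline{y}}(x;0,t) = x + Y(t) \pmod{2\pi\mathbb{Z}^2} \qquad (x\in \mathbb{T}^2,\ t\in [0,T]).
\]
Under this identification, the return condition $\Phi^{\overline{y}}(x;0,T) = x$ for all $x \in \mathbb{T}^2$ is equivalent to $Y(T) \in 2\pi\mathbb{Z}^2$, while the flushing condition \eqref{equation:flushingproperty} becomes: for every $x \in \mathbb{T}^2$ there exists $t_x \in (0,T)$ with $Y(t_x) \in \widetilde{\omegaup} - x$ in $\mathbb{T}^2$. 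Since $\widetilde{\omegaup}$ has nonempty interior, we may fix a ball $B(x_0,r)\subset \widetilde{\omegaup}$ with $r>0$; it then suffices to build a curve $Y$ whose image $\{Y(t) : t \in (0,T)\}$, viewed as a subset of $\mathbb{T}^2$, is an $r$-net, because then each ball $B(x_0-x,r)$ contains some $Y(t_x)$, giving $x + Y(t_x) \in B(x_0,r) \subset \widetilde{\omegaup}$.

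Next I would construct $Y$ by hand. Fix $N \in \mathbb{N}$ large enough that the lattice $\Lambda_N \coloneqq \{(2\pi j/N, 2\pi k/N) : 0 \le j,k < N\}$ is $r/2$-dense in $\mathbb{T}^2$. Using horizontal and vertical segments of length $2\pi/N$, I would construct a closed piecewise affine path $\widetilde Y \colon [T/4, 3T/4] \to \mathbb{R}^2$ based at $0$ whose projection to $\mathbb{T}^2$ visits every point of $\Lambda_N$ and returns to $0$ (for instance, a lexicographic scan: traverse the first horizontal line, jump up by $2\pi/N$, traverse the next, and so on, then close the loop vertically along a column). Extend by $\widetilde Y \equiv 0$ on $[0,T/4]\cup [3T/4,T]$. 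Finally, smooth the corners and the two transitions at $t=T/4$ and $t=3T/4$ by a standard reparametrization, replacing each corner by a small $C^\infty$ arc, so that $Y \in C^\infty([0,T];\mathbb{R}^2)$ with $Y \equiv 0$ in a neighbourhood of $\{0, T\}$; setting $\overline{y} \coloneqq \dot Y$ then gives $\overline{y} \in C^\infty_0((0,T);\mathbb{R}^2)$.

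The main point to verify is that these modifications preserve both required properties. The return condition is automatic: the smoothing is performed strictly in the interior of $(0,T)$, and the endpoint values $Y(0)=Y(T)=0$ are untouched. For the net property, I would ensure the corner-smoothing is done at a scale $\eta \ll r/2$ in the $C^0$ norm, which is possible because the total length of the original path is bounded in terms of $N$; then the smoothed curve remains uniformly within distance $r/2$ of the original, which was $r/2$-dense, so the image is $r$-dense in $\mathbb{T}^2$, as required. The construction is elementary and the only mildly delicate step is arranging the smoothing so that the uniform error stays below $r/2$; this poses no real difficulty because the scale of smoothing can be chosen freely after $N$ and $r$ are fixed.
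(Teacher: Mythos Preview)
Your proof is correct and rests on the same core observation as the paper: since $\overline{y}$ is space-independent, the flow is the rigid translation $x\mapsto x+Y(t)$, so it suffices to build a smooth closed curve $Y$ in $\mathbb{R}^2$ whose projection to $\mathbb{T}^2$ passes within a fixed small radius of every point.

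The difference is purely in the execution. You scan a fine lattice with a single piecewise-affine loop and then smooth the corners, controlling the $C^0$ perturbation to preserve the $r$-net property. The paper instead covers $\mathbb{T}^2$ by $M$ translates $O_1,\dots,O_M$ of a ball $O\subset\widetilde{\omegaup}$ with center $x^0$, splits $[0,T]$ into $M$ subintervals, and on the $i$-th subinterval chooses directly a smooth bump $\chi_i\in C^\infty_0((0,T/M);\mathbb{R}^2)$ whose first half integrates to $x^0-x^i$ and whose second half integrates back to $0$; thus at the midpoint of that subinterval every point of $O_i$ sits inside $O$, and the return condition holds automatically after each subinterval. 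This out-and-back construction avoids any smoothing step and any estimate on the smoothing scale, which makes it slightly more direct; your scanning approach is equally valid and perhaps more visual, at the cost of the extra (easy) perturbation argument.
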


\begin{proof}
	We fix an open ball $O \subset \widetilde{\omegaup}$ with center $x^0$ and cover $\mathbb{T}^2$ by a finite number $M \in \mathbb{N}$ of translations $\{ O_i \}_{i \in \{1,\dots,M\}}$ of $O$. For each $i \in \{1,\dots,M\}$, we denote the center of $O_i$ by $x^i$ and make any choice of functions $\chi_i \in C^{\infty}_0((0,T/M);\mathbb{R}^2)$ such that
	\begin{gather*}
		\int_0^{\frac{T}{2M}} \chi_i(s) \, \diff{s} = x^0 - x^i, \quad \int_{\frac{T}{2M}}^{\frac{T}{M}} \chi_i(s) \, \diff{s} = x^i - x^0
	\end{gather*}
	for $i \in \{1, \dots, M\}$. Then, for $t \in [(i-1)T/M, iT/M)$, with $i \in \{1, \dots, M\}$, we define
	\[
	\overline{y}(t) \coloneq \chi_i(t-(i-1)T/M).
	\]
	
	To verify \eqref{equation:flushingproperty}, let $x \in \mathbb{T}^2$ and $i \in \{1, \dots, M\}$ such that $x \in O_i$. Then, for the choice $t_x \coloneq T(2i-1)/2M$, integrating \eqref{equation:flow} yields
	\[
	\Phi^{\overline{y}}(x;0, t_x) = x + \int_{(i-1)T/M}^{T(2i-1)/2M} \chi_i(s-(i-1)T/M) \, \diff{s} = x^0 + x - x^i \in \widetilde{\omega}.
	\]
\end{proof}

The next lemma is concerned with solving the problem $\nabla\wedge F = f$ under conditions on the support of $f$ and $F$. The proof follows along the lines of Coron et al.~\cite{CoronMarbachSueur2020}*{Appendix A.2}.
\begin{lemma}\label{lemma:integrating_control}
	Let $S = (S_1, S_2)^2 \subset \mathbb{T}^2$,~$k \in \mathbb{N}$, and~$f \in H^k(\mathbb{T}^2)$ a zero average function with $\operatorname{supp}(f) \subset S$.
	There exists $F \in H^k(\mathbb{T}^2; \mathbb{R}^2)$ such that $\nabla\wedge F = f$ in $\mathbb{T}^2$ and $\operatorname{supp}(F) \subset S$.
\end{lemma}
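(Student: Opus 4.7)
The plan is to reduce the equation $\partial_1 F_2 - \partial_2 F_1 = f$ on $\mathbb{T}^2$ to a pair of one-dimensional antiderivatives, after peeling off the obstruction that $f$ need not have zero $x_2$-mean at every fixed $x_1$. Writing $S=(S_1,S_2)^2$, I would first introduce the partial mean
\[
g(x_1) \coloneqq \int_{\mathbb{T}} f(x_1,t)\,\diff{t}.
\]
By Fubini, $g \in H^k(\mathbb{T})$; the zero-average hypothesis on $f$ gives $\int_{\mathbb{T}}g=0$; and $\operatorname{supp}(f)\subset S$ forces $\operatorname{supp}(g)\subset(S_1,S_2)$. Pick any cutoff $\chi\in C_c^\infty((S_1,S_2))$ with $\int_{\mathbb{T}}\chi\,\diff{t}=1$, and decompose
\[
f(x_1,x_2) = g(x_1)\chi(x_2) + h(x_1,x_2),\qquad h \coloneqq f-g\chi.
\]
By construction $h \in H^k(\mathbb{T}^2)$ is supported in $S$ and, crucially, $\int_{\mathbb{T}}h(x_1,t)\,\diff{t}=0$ for every $x_1\in\mathbb{T}$.

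Next, I would set
\[
F_1(x_1,x_2)\coloneqq -\int_{S_1}^{x_2} h(x_1,t)\,\diff{t},\qquad F_2(x_1,x_2)\coloneqq \chi(x_2)\int_{S_1}^{x_1}g(s)\,\diff{s}.
\]
The zero $x_2$-mean of $h$ ensures that $F_1$ is $2\pi$-periodic in $x_2$, and the zero mean of $g$ ensures that $F_2$ is $2\pi$-periodic in $x_1$. Differentiating gives the identity $\partial_1 F_2 - \partial_2 F_1 = g\chi + h = f$. The support claim is then a case analysis: when $x_1\notin(S_1,S_2)$ both $g(x_1)=0$ and $f(x_1,\cdot)=0$, hence $h(x_1,\cdot)=0$, so both $F_1(x_1,\cdot)$ and $F_2(x_1,\cdot)$ vanish. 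For $x_2\leqslant S_1$ the $x_2$-support of $h(x_1,\cdot)$ lies strictly above the integration interval, giving $F_1=0$; for $x_2\geqslant S_2$ the integral $\int_{S_1}^{x_2}h(x_1,t)\,\diff{t}$ reduces to the full $x_2$-integral of $h$, which is zero; the factor $\chi(x_2)$ handles the $x_2$-support of $F_2$.

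For the $H^k$ regularity, $g\in H^k(\mathbb{T})$ yields $x_1 \mapsto \int_{S_1}^{x_1}g$ in $H^{k+1}(\mathbb{T})$, so $F_2\in H^{k+1}(\mathbb{T}^2)\hookrightarrow H^k(\mathbb{T}^2;\mathbb{R})$. For $F_1$, any mixed derivative $\partial_1^\alpha\partial_2^\beta F_1$ with $\beta\geqslant 1$ reduces via the fundamental theorem of calculus to $-\partial_1^\alpha\partial_2^{\beta-1}h\in L^2(\mathbb{T}^2)$, while the pure $x_1$-derivatives $\partial_1^\alpha F_1$ with $\alpha\leqslant k$ are controlled via Cauchy--Schwarz on the $x_2$-antiderivative:
\[
\left\|\partial_1^\alpha F_1\right\|_{L^2(\mathbb{T}^2)}^2 \leqslant 2\pi\int_{\mathbb{T}^2}\int_{\mathbb{T}}|\partial_1^\alpha h(x_1,t)|^2\,\diff{t}\,\diff{x_1}\,\diff{x_2} \lesssim \|h\|_{H^k(\mathbb{T}^2)}^2.
\]
There is no real structural obstacle; the one point of care is the double compatibility condition (zero $x_1$-mean of $g$ and zero $x_2$-mean of $h$), which is precisely what the split $f=g\chi+h$ provides and what allows both $F_1$ and $F_2$ to vanish outside $S$.
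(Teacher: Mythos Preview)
Your proof is correct and follows essentially the same idea as the paper: split off the partial mean of $f$ so that the remainder has zero fibrewise integral, then write down explicit one-dimensional antiderivatives and check periodicity and support by hand. The paper works with the $x_1$-mean $g(x_2)=\int f(\cdot,x_2)$ and two cutoffs $\chi,\sigma$ in the $x_1$ variable, while you take the $x_2$-mean $g(x_1)=\int f(x_1,\cdot)$ and a single cutoff $\chi$ in $x_2$; your packaging is arguably cleaner, but the mechanism is the same. One small point worth stating explicitly (the paper does): since $\operatorname{supp}(f)$ is compact in the open square $S$, it sits in some $[L_1,L_2]^2\Subset S$, which is what makes the support conclusion for $F$ strict rather than merely ``$F=0$ on $\mathbb{T}^2\setminus S$''; your case analysis uses this implicitly.
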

\begin{proof}
	As the case $S = \mathbb{T}^2$ is trivial, we can assume that
	$\operatorname{supp}(f) \subset(L_1, L_2)^2$ for $0< S_1<L_1 < L_2<S_2 < 2\pi$. Further, let $\chi \in C^{\infty}(\mathbb{T})$ with $\operatorname{supp}(\chi) \subset (S_1, S_2)$ and $\chi = 1$ on a neighborhood of $[L_1, L_2]$, and we fix $\sigma \in C^{\infty}([0, 2\pi))$ such that $\sigma(s) = 0$ if $s \in [0, L_1]$ and $\sigma(s) = 1$ if $s \in [L_2,2\pi)$. Even though~$\sigma$ has jumps when periodically extended to $\mathbb{T}$, the product $\chi \sigma'$ can be viewed as a smooth function supported near $[L_1, L_2]$ modulo $2\pi$.
	Finally, the desired function $F = [F_1, F_2]^{\top} \in H^k(\mathbb{T}^2; \mathbb{R}^2)$ is obtained by defining its components for each $x \in \mathbb{T}^2$ by means of
	\begin{equation}\label{equation:F}
		\begin{gathered}
			F_1(x_1,x_2) \coloneq  -\chi(x_1)\sigma'(x_1)\int_{L_1}^{x_2} \int_{L_1}^{L_2} f(x_1, s) \, \diff{x}_1 \, \diff{s},\\
			F_2(x_1, x_2) \coloneq \chi(x_1)\left[- \sigma(x_1)\int_{L_1}^{L_2} f(x_1, x_2) \, \diff{x}_1 + \int_0^{x_1} f(s, x_2) \, \diff{s}\right].
		\end{gathered}
	\end{equation}
\end{proof}

\subsection{{Local exact controllability: proof of \Cref{theorem:EulerExactLocal}}}\label{subsection:proof_local}
We fix a return method trajectory $\overline{y} \in C^{\infty}_0((0,1); \mathbb{R}^2)$ by applying \Cref{lemma:reference_trajectory} for an open square $\widetilde{\omegaup} \subset \omegaup$ and $T = 1$. Then, we fix a suitable $M \in \mathbb{N}$ and denote by $O_1, \dots, O_M \subset \mathbb{T}^2$ an open covering of~$\mathbb{T}^2$ such that for each $i \in \{1,\dots,M\}$ there are $0 < t_i^a < t_i^b < 1$ with
\begin{equation}\label{equation:flushing}
	\Phi^{\overline{y}}(O_i; 0, [t_i^a, t_i^b]) \subset \widetilde{\omegaup},
\end{equation}
where we denote $\Phi^{\overline{y}}(A; 0, J) \coloneq \big\{\Phi^{\overline{y}}(x; 0, t) \, \, \big| \, \, x \in A, \, t \in J \big\}$
for $A \subset \mathbb{T}^2$ and $J \subset [0,1]$. Further, we introduce a partition of unity $\{ \mu_i \}_{i\in\{1,\dots,M\}} \subset C^{\infty}(\mathbb{T}^2)$ subordinate to~$\{ O_i \}_{i\in\{1,\dots,M\}}$. That is,
\[
\operatorname{supp}(\mu_i) \subset O_i, \quad \sum_{i=1}^M \mu_i = 1
\]
for $i\in\{1,\dots,M\}$.

The desired controlled solution to \eqref{equation:controlled_euler} with $T = 1$ will be obtained as the fixed point of a certain map~$\mathscr{F}$ defined on
\[
X_{\delta} \coloneq \left\{ v \in V_1 \, \, \Bigg| \, \, \sup_{t \in [0,1]}\|v(t,\cdot) - \overline{y}(t)\|_3 \leq \delta, \,\, \operatorname{div}v = 0\right\},
\]
where $V_1$ is defined in~\eqref{eq:Vs} and $\delta > 0$ is chosen so small that
\begin{equation}\label{equation:geometric_property}
	\Phi^{\widetilde{v}}(O_i; 0, [t_i^a, t_i^b]) \subset \widetilde{\omegaup}
\end{equation}
for each $\widetilde{v} \in X_{\delta}$ and $i \in \{1,\dots, M\}$.

\begin{remark}
	It is possible to choose~$\delta > 0$ such that \eqref{equation:geometric_property} holds for all $\widetilde{v} \in X_{\delta}$. Indeed, for $x \in \mathbb{T}^2$ and $(s,t)\in[0,1]^2$, one has
	\begin{align*}
		\sup_{x \in \mathbb{T}^2, (s,t) \in [0,1]^2} |(\Phi^{\overline{y}}-\Phi^{\widetilde{v}})(x;s,t)|
		\leqslant \int_{s}^t | \overline{y}(r) - \widetilde{v}(r, \Phi^{\widetilde{v}}(x;s,r)) | \, \diff{r} \leq \delta.
	\end{align*}
	Therefore, due to \eqref{equation:flushing}, one obtains \eqref{equation:geometric_property} whenever the number $\delta$ in the definition of $X_{\delta}$ is sufficiently small.
\end{remark}

\subsubsection{The fixed point map $\mathscr{F}$}\label{subsubsection:construction_F}
Let $\widetilde{v} \in X_{\delta}$ be arbitrary. In what follows, we define based on $\widetilde{v}$ a new element $v \in X_{\delta}$, and then assign~$\mathscr{F}(\widetilde{v}) \coloneq v$.

First, we denote by~$\{ w_i \}_{i \in \{1,\dots, M\}}$ the family of solutions to the respective linear transport problems 
\begin{equation}\label{equation:aux_vorticity}
	\partial_t w_i + (\widetilde{v} \cdot \nabla) w_i = 0, \quad w_i(0, \cdot) = \nabla \wedge (\mu_i v_0),
\end{equation}
where for each $i \in \{1,\dots, M\}$ the initial state of $w_i$ is localized in $O_i$. Moreover, by the method of characteristics, it holds
\begin{equation}\label{equation:mcw}
	w_i(t,x) = (\nabla \wedge (\mu_i v_0)) (\Phi^{\widetilde{v}}(x; t, 0)) 
\end{equation}
for $i \in \{1,\dots, M\}$. This implies the regularity
\[
w_1, \dots, w_M \in C([0,1]; L^2(\mathbb{T}^2))\cap L^{\infty}([0,1]; H^2(\mathbb{T}^2)).
\]

Below, in \eqref{equation:auxilliary_vorticity_combined}, we will employ the auxiliary functions $\{ w_i \}_{i \in \{1,\dots, M\}}$ as building blocks for a vorticity control.To that end, we note that for any $i\in\{1,\dots,M\}$ and $t \in [0,1]$, one has
\begin{equation}\label{equation:avg1}
	\int_{\mathbb{T}^2} w_i(t,x) \, \diff{x} = 0.
\end{equation}
Indeed, due to $\int_{\mathbb{T}^2} \nabla \wedge (\mu_i v_0)(x) \, \diff{x} = 0$, this follows by integrating the equation \eqref{equation:aux_vorticity} over $\mathbb{T}^2$ and applying integration by parts.

To account also for the prescribed body force~$h$ when defining the vorticity control in \eqref{equation:auxilliary_vorticity_combined} below, we introduce another auxiliary function~$\widetilde{w}_h$ as the solution to the transport problem
\begin{gather}\label{equation:aux_vorticity_h}
	\partial_t \widetilde{w}_h + (\widetilde{v} \cdot \nabla) \widetilde{w}_h = \nabla \wedge h + \widetilde{\xi}_h, \quad \widetilde{w}_h(0,\cdot) = 0,
\end{gather}
where
\begin{gather*}
	\widetilde{\xi}_h(t,x) \coloneq -\sum_{i=1}^M \frac{\mathbb{I}_{[t_i^a,t_i^b]}(t)}{t_i^b - t_i^a}\mu_i(\Phi^{\widetilde{v}}(x;t,1)) (\nabla \wedge h)\left(z_i(t), \Phi^{\widetilde{v}}\left(x; t, z_i(t)\right)\right),
\end{gather*}
with $z_i(t) \coloneq (t - t_i^a)/(t_i^b - t_i^a)$ for $i \in \{1,\dots, M\}$ and $(t,x)\in[0,1]\times\mathbb{T}^2$. In particular, this implies that $\operatorname{supp}(\widetilde{\xi}_h) \subset [0,1]\times\widetilde{\omegaup}$.
Using the change of variables $z = (s-t^a_i)(t^b_i-t^a_i)^{-1}$ under the integral sign, one can verify that
\[
\int_0^1 \widetilde{\xi}_h(s,\Phi^{\widetilde{v}}(x; 1, s)) \, \diff{s} = -\int_0^1 (\nabla \wedge h)(s,\Phi^{\widetilde{v}}(x; 1, s)) \, \diff{s}.
\]
This implies in view of Duhamel's principle that $\widetilde{w}_h(1,\cdot) = 0$.
In order to replace~$\widetilde{w}_h$ by an average-free version, we take $\widetilde{\chi} \in C^{\infty}(\mathbb{T}^2)$ with $\operatorname{supp}(\widetilde{\chi}) \subset \widetilde{\omegaup}$ and $\smallint_{\mathbb{T}^2} \widetilde{\chi}(x) \, \diff{x} = 1$. Then, we define the functions
\begin{equation*}
	\begin{gathered}
		w_h(t,x) \coloneq \widetilde{w}_h(t,x) - \widetilde{\chi}(x) \int_{\mathbb{T}^2} \widetilde{w}_h(t, z) \, \diff{z}, \\ 
		\xi_h(t,x) \coloneq \widetilde{\xi}_h(t,x) - \widetilde{\chi}(x) \int_{\mathbb{T}^2} \partial_t \widetilde{w}_h(t,z) \, \diff{z} - [(\widetilde{v}\cdot \nabla) \widetilde{\chi}](x) \int_{\mathbb{T}^2} \widetilde{w}_h(t,z) \, \diff{z},
	\end{gathered}
\end{equation*}
which satisfy
\begin{equation}\label{equation:wh}
	\begin{gathered}
		\partial_t w_h + (\widetilde{v} \cdot \nabla) w_h = \nabla \wedge h + \xi_h, \quad w_h(0, \cdot) = 0, \quad w_h(1, \cdot) = 0
	\end{gathered}
\end{equation}
and
\begin{equation}\label{equation:avg2}
	\int_{\mathbb{T}^2}w_h(t,x) \, \diff{x} = 0
\end{equation}
for all $t \in [0,1]$.

Next, following the idea of \cite{CoronMarbachSueur2020}*{Appendix A.2}, we introduce a controlled solution to a linear version of the vorticity formulation of \eqref{equation:controlled_euler} by setting
\begin{equation}\label{equation:auxilliary_vorticity_combined}
	\begin{gathered}
		w(t,x) \coloneq \sum_{i=1}^M \beta_i(t)w_i(t,x) + w_h(t,x), \quad
		\xi(t,x) \coloneq \sum_{i=1}^M \beta_i'(t)w_i(t,x) + \xi_h(t,x)
	\end{gathered}
\end{equation}
for all $(t,x) \in [0,1]\times \mathbb{T}^2$, and where $\{ \beta_i \}_{i\in\{1,\dots,M\}} \subset C^{\infty}([0,1];[0,1])$ are fixed independently of the choice of $\widetilde{v}$ such that
\begin{equation}\label{equation:beta_i}
	\beta_i(t) = \begin{cases}
		1 & \mbox{ if } 0 \leqslant t \leqslant t_i^a,\\
		0 & \mbox{ if } t \geqslant t_i^b
	\end{cases}
\end{equation}
for each $i \in \{1,\dots,M\}$.

\begin{lemma}\label{lemma:auxw}
	The functions $w$ and $\xi$ defined via \eqref{equation:auxilliary_vorticity_combined} solve the controllability problem
	\begin{equation}\label{equation:acp}
		\partial_t w + (\widetilde{v} \cdot \nabla) w = \nabla \wedge h + \xi, \quad w(0,\cdot) = \nabla \wedge v_0, \quad w(1,\cdot) = 0.
	\end{equation}
	Moreover, $w$ belongs to $C([0,1]; L^2(\mathbb{T}^2))\cap L^{\infty}([0,1]; H^2(\mathbb{T}^2))$ and has zero average, while the control satisfies $\operatorname{supp}(\xi) \subset [0,1]\times\widetilde{\omegaup}$. 
\end{lemma}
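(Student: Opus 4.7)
The plan is to verify each of the four assertions by direct substitution, exploiting the linearity of the transport operator $\partial_t + \widetilde{v}\cdot\nabla$ and the way the auxiliary functions $w_i$ and $w_h$ were designed. I would start with the PDE in \eqref{equation:acp}: differentiating $w = \sum_i \beta_i w_i + w_h$ gives
\begin{equation*}
\partial_t w + (\widetilde{v}\cdot\nabla) w = \sum_{i=1}^M \beta_i'(t) w_i + \sum_{i=1}^M \beta_i(t)\bigl[\partial_t w_i + (\widetilde{v}\cdot\nabla) w_i\bigr] + \bigl[\partial_t w_h + (\widetilde{v}\cdot\nabla) w_h\bigr].
\end{equation*}
The bracketed sums vanish by \eqref{equation:aux_vorticity} and reduce to $\nabla\wedge h + \xi_h$ by \eqref{equation:wh}, so the right-hand side collapses to $\sum_i \beta_i' w_i + \nabla\wedge h + \xi_h = \nabla\wedge h + \xi$, which matches the definition of $\xi$ in \eqref{equation:auxilliary_vorticity_combined}.

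For the temporal boundary data, I would use $\beta_i(0) = 1$ and the partition of unity property $\sum_i \mu_i = 1$ to obtain $w(0,\cdot) = \sum_i \nabla\wedge(\mu_i v_0) = \nabla\wedge v_0$, while $w_h(0,\cdot)=0$ by \eqref{equation:wh}. At $t=1$, since each $t_i^b < 1$ we have $\beta_i(1) = 0$ by \eqref{equation:beta_i}, and $w_h(1,\cdot)=0$ again by \eqref{equation:wh}, which gives $w(1,\cdot)=0$. The zero average property of $w$ follows by integrating the identity defining $w$ over $\mathbb{T}^2$ and invoking \eqref{equation:avg1} together with \eqref{equation:avg2}. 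The claimed regularity for $w$ is inherited term by term: the representation \eqref{equation:mcw} combined with $v_0 \in H^3$ and $\widetilde{v} \in X_\delta \subset L^\infty([0,1]; H^3)$ yields the stated regularity for each $w_i$, and a parallel transport/energy argument, using the input $\nabla\wedge h + \xi_h$ with $h \in L^2([0,1]; H^5)$, does the job for $w_h$.

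The step that deserves the most care is the support claim $\operatorname{supp}(\xi)\subset[0,1]\times\widetilde{\omegaup}$, as it is the only one that actually uses the return method geometry. For $\xi_h$ the inclusion was already checked during its definition (the cutoff $\widetilde{\chi}$ is supported in $\widetilde{\omegaup}$, and $\widetilde{\xi}_h$ inherits spatial localization from the factor $\mu_i(\Phi^{\widetilde{v}}(\cdot; t, 1))$, but more importantly from the prefactor structure; one should trace this carefully). For the sum $\sum_i \beta_i'(t)w_i(t,x)$, I would argue that $\beta_i'$ is supported in $[t_i^a, t_i^b]$, and on this time interval the method-of-characteristics formula \eqref{equation:mcw} shows $w_i(t,x)\neq 0$ only when $\Phi^{\widetilde{v}}(x;t,0) \in \operatorname{supp}(\mu_i v_0) \subset O_i$, i.e., when $x \in \Phi^{\widetilde{v}}(O_i;0,t) \subset \Phi^{\widetilde{v}}(O_i;0,[t_i^a,t_i^b]) \subset \widetilde{\omegaup}$ by the flushing property \eqref{equation:geometric_property}. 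This is where the whole construction of the return-method trajectory $\overline{y}$ and the choice of $\delta$ ensuring \eqref{equation:geometric_property} pay off, so it is the step I would write out most carefully; the rest is a bookkeeping verification.
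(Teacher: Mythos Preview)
Your proposal is correct and follows essentially the same approach as the paper's proof: plug the definition \eqref{equation:auxilliary_vorticity_combined} into the transport equation, read off the initial and terminal conditions from \eqref{equation:beta_i} and \eqref{equation:wh}, cite \eqref{equation:avg1}--\eqref{equation:avg2} for the zero average, inherit regularity from the linear transport problems, and use \eqref{equation:mcw} together with the flushing property \eqref{equation:geometric_property} on the time window $[t_i^a,t_i^b]$ for the support of $\sum_i\beta_i' w_i$. Your write-up is in fact more explicit than the paper's (which compresses most of this into single sentences), and you are right to flag the support argument as the only genuinely geometric step.
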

\begin{proof}
	That~$w$ solves the initial value problem in \eqref{equation:acp} with control $\xi$ can be seen by plugging \eqref{equation:auxilliary_vorticity_combined} into \eqref{equation:acp}. The regularity of $w$ is inherited from that of the solutions to the involved transport problems \eqref{equation:aux_vorticity} and \eqref{equation:aux_vorticity_h}. It follows then from the properties of $\{ \beta_i \}_{i \in \{1,\dots,M\}}$ and the definition of $w_h$ that $w(1,\cdot) = 0$. Moreover, by~\eqref{equation:avg1} and~\eqref{equation:avg2}, the function $w$ has zero average. 
	Finally, the support of $\xi$ is characterized by using~\Cref{lemma:reference_trajectory} and~\cref{equation:flushing,equation:geometric_property,equation:beta_i}. Indeed, given any $t \in [0,1]$ with $\beta_i'(t) \neq 0$ for an index $i \in \{1,\dots,M\}$, one finds that $\operatorname{supp}(w_i(t,\cdot))$, which is transported by the flow of~$\widetilde{v}$ in the sense of \eqref{equation:mcw}, is contained in~$\widetilde{\omegaup}$. 
\end{proof}

To complete the construction of $\mathscr{F}$, the function $v(t,\cdot)$ is obtained for each $t \in [0, 1]$ by solving in $\mathbb{T}^2$ the div-curl problem
\begin{gather*}
	\operatorname{div}(v(t,\cdot)) = 0, \quad \nabla \wedge v(t,\cdot) = w(t,\cdot), 
	\quad \int_{\mathbb{T}^2} v(t, x) \, \diff{x} = \overline{y}(t) + \kappa(t) \int_{\mathbb{T}^2} v_0(x) \, \diff{x},
\end{gather*}
where $\kappa \in C^{\infty}([0,1];[0,1])$ is fixed such that $\kappa(0) = 1$ and $\kappa(1) = 0$.

\begin{remark}\label{remark:streamfunctionv}
	One can construct $v$ using the stream function approach. To this end, one first solves in $\mathbb{T}^2$ the Poisson problems $\Delta \psi(t,\cdot) = -w(t,\cdot)$ with $t \in [0,1]$ and subsequently takes $v \coloneq \nabla^{\perp} \psi + A$ with $A \coloneq \overline{y} + \kappa \smallint_{\mathbb{T}^2} v_0(x) \, \diff{x}$.
\end{remark}

Finally, we define $\mathscr{F}(\widetilde{v}) \coloneq v$. In view of \Cref{remark:streamfunctionv} and classical elliptic regularity results for the Laplacian, one can infer that $\mathscr{F}(\widetilde{v}) \in V_1$.

\begin{proposition}\label{proposition:fixedpoint}
	If the data bound $\delta_0 > 0$ in \eqref{equation:smallnessconstraint} is chosen sufficiently small, the map $\mathscr{F}$ admits a unique fixed point in $X_{\delta}$.
\end{proposition}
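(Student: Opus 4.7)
The plan is to apply the Banach fixed-point theorem by showing that, for $\delta_0$ chosen sufficiently small, $\mathscr{F}$ leaves $X_{\delta}$ invariant and is a strict contraction in a weaker norm, specifically
$\|\cdot\|_{*} \coloneq \|\cdot\|_{C([0,1]; H^2(\mathbb{T}^2;\mathbb{R}^2))}$,
under which $X_{\delta}$ remains complete. First, I establish stability, $\mathscr{F}(X_{\delta}) \subset X_{\delta}$. For $\widetilde{v} \in X_{\delta}$, the Sobolev embedding $H^3 \hookrightarrow W^{1,\infty}(\mathbb{T}^2)$ provides a uniform Lipschitz bound on the transporting velocity depending only on $\delta$ and $\overline{y}$. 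Standard $H^2$ transport estimates for the linear problems~\eqref{equation:aux_vorticity} and~\eqref{equation:aux_vorticity_h}, together with the explicit structure of $\widetilde{\xi}_h$, yield
\begin{equation*}
    \sup_{t\in[0,1]}\bigl(\|w_i(t)\|_{H^2} + \|w_h(t)\|_{H^2}\bigr) \leqslant C\bigl(\|v_0\|_3 + \|h\|_{L^2([0,1]; H^5(\mathbb{T}^2;\mathbb{R}^2))}\bigr),
\end{equation*}
with $C$ independent of the particular choice of $\widetilde{v} \in X_{\delta}$. Summing as in~\eqref{equation:auxilliary_vorticity_combined} controls $\|w\|_{L^{\infty}([0,1]; H^2)}$ by the same quantity. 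Elliptic regularity applied to the stream function of \Cref{remark:streamfunctionv} then gives $\|\nabla^{\perp}\psi\|_{L^{\infty}([0,1]; H^3)} \leqslant C\|w\|_{L^{\infty}([0,1]; H^2)}$, and since $A = \overline{y} + \kappa \int_{\mathbb{T}^2} v_0$ differs from $\overline{y}$ by a term of size $\|v_0\|_3$, the smallness hypothesis~\eqref{equation:smallnessconstraint} with $\delta_0 = \delta_0(\delta)$ taken small enough forces $\sup_t\|v(t) - \overline{y}(t)\|_3 \leqslant \delta$.

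Next, I establish contractivity. For $\widetilde{v}_1, \widetilde{v}_2 \in X_{\delta}$ with outputs $v_j = \mathscr{F}(\widetilde{v}_j)$ and intermediate vorticity-type fields $w^{(j)}$, a Gronwall argument on~\eqref{equation:flow} combined with the embedding $H^2 \hookrightarrow L^{\infty}$ in two dimensions yields
\begin{equation*}
    \sup_{x \in \mathbb{T}^2,\, s,t \in [0,1]}\bigl|\Phi^{\widetilde{v}_1}(x;s,t) - \Phi^{\widetilde{v}_2}(x;s,t)\bigr| \leqslant C\|\widetilde{v}_1 - \widetilde{v}_2\|_{*}.
\end{equation*}
The differences $w_i^{(1)} - w_i^{(2)}$ and $w_h^{(1)} - w_h^{(2)}$ satisfy linear transport equations with source terms of the form $(\widetilde{v}_1 - \widetilde{v}_2)\cdot\nabla w^{(2)}$ and, for the $h$-part, differences of $\nabla\wedge h$ evaluated along the two flows $\Phi^{\widetilde{v}_j}$, both controlled using the flow Lipschitz bound above and the $H^5$-regularity of $h$. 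Standard $H^1$ transport estimates, which only require $\widetilde{v}_1 \in W^{1,\infty}$, together with the uniform $H^2$-bound on $w^{(2)}$ from the stability step give
\begin{equation*}
    \|w^{(1)} - w^{(2)}\|_{C([0,1]; H^1)} \leqslant C\bigl(\|v_0\|_3 + \|h\|_{L^2([0,1]; H^5(\mathbb{T}^2;\mathbb{R}^2))}\bigr)\|\widetilde{v}_1 - \widetilde{v}_2\|_{*}.
\end{equation*}
Elliptic regularity for the stream function (noting that the constant-in-space summand $A$ is independent of $\widetilde{v}$) transfers this to $\|v_1 - v_2\|_{*} \leqslant C\delta_0 \|\widetilde{v}_1 - \widetilde{v}_2\|_{*}$, a strict contraction once $\delta_0$ is small.

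Finally, $X_{\delta}$ is a closed subset of the Banach space $(C([0,1]; H^2(\mathbb{T}^2;\mathbb{R}^2)), \|\cdot\|_{*})$: if a sequence $\widetilde{v}_n \in X_{\delta}$ converges in $\|\cdot\|_{*}$ with $\sup_n \|\widetilde{v}_n - \overline{y}\|_{L^{\infty}([0,1]; H^3)} \leqslant \delta$, then by Banach--Alaoglu a subsequence converges weakly-$*$ in $L^{\infty}([0,1]; H^3)$, whose limit coincides with the $\|\cdot\|_{*}$-limit and satisfies the same bound by lower semicontinuity of the norm. Hence $(X_{\delta}, \|\cdot\|_{*})$ is a complete metric space, and the Banach fixed-point theorem delivers the unique fixed point. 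The principal obstacle is the contraction estimate: each source of nonlinear dependence on $\widetilde{v}$ (through the flows appearing in $w_i$, $\widetilde{\xi}_h$, $\widetilde{w}_h$, and in the $\widetilde{\chi}$-correction) must be tracked so that the total Lipschitz constant is of the order $\|v_0\|_3 + \|h\|_{L^2([0,1]; H^5)}$ rather than merely finite, which is what yields contractivity in the small-data regime.
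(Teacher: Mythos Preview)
Your strategy coincides with the paper's: Banach's fixed-point theorem applied to $\mathscr{F}$ on $X_\delta$, with the self-map property coming from small data, the contraction constant carrying the factor $\delta_0$, and closedness of $X_\delta$ secured via weak-$*$ compactness in $L^\infty([0,1];H^3)$. The only substantive difference is the weaker norm chosen for the contraction: the paper works in $C([0,1];H^1)$ (hence estimates the vorticity difference merely in $L^2$), while you work in $C([0,1];H^2)$ (hence need the vorticity difference in $H^1$).

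That stronger choice creates a genuine difficulty that you gloss over in the $w_h$-part of the contraction. Controlling $\widetilde{\xi}_h^{(1)}-\widetilde{\xi}_h^{(2)}$ in $H^1_x$ forces you to bound $\nabla_x\bigl[\mu_i(\Phi_1)-\mu_i(\Phi_2)\bigr]$ and $\nabla_x\bigl[(\nabla\wedge h)\circ\Phi_1-(\nabla\wedge h)\circ\Phi_2\bigr]$, which in turn requires an estimate on $\nabla\Phi_1-\nabla\Phi_2$. The ODE for this difference contains the source $\bigl[(\nabla\widetilde v_1)(\Phi_1)-(\nabla\widetilde v_1)(\Phi_2)\bigr]\nabla\Phi_2$; a Lipschitz bound here would need $D^2\widetilde v_1\in L^\infty$, which $\widetilde v_1\in H^3(\mathbb T^2)$ does not provide (only $\nabla\widetilde v_1\in C^{0,\alpha}$ for $\alpha<1$), so one obtains at best a H\"older-type estimate that does not close as a Banach contraction. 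Your ``flow Lipschitz bound'' controls $\Phi_1-\Phi_2$ in $L^\infty$, not its gradient. The paper's norm choice avoids this entirely: at the $L^2$ level for vorticity, the $\widetilde\xi_h$-difference needs only $\|\Phi_1-\Phi_2\|_{L^2}$, which follows from $\|\widetilde v_1-\widetilde v_2\|_{C([0,1];L^2)}$ via Gr\"onwall and volume preservation. Switching to $C([0,1];H^1)$ repairs your argument with no other changes.
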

\begin{proof}
	We show that~$\mathscr{F}$ is a contractive self-map $X_{\delta} \longrightarrow X_{\delta}$ with respect to the norm of $C([0,1];H^1(\mathbb{T}^2;\mathbb{R}^2))$ if the bound~$\delta_0 > 0$ in \eqref{equation:smallnessconstraint} is fixed sufficiently small. Note that $X_{\delta}$ is
	a closed subspace of $C([0,1];H^1(\mathbb{T}^2;\mathbb{R}^2))$ and hence a complete metric space with respect to the norm of $C([0,1];H^1(\mathbb{T}^2;\mathbb{R}^2))$.
	Thus, the conclusion follows from Banach's fixed point theorem. Below, $C > 0$ denotes a generic constant which can be different from line to line.
	
	\paragraph{Self-map property.}
	Let $\widetilde{v} \in X_{\delta}$ be arbitrary and define $v \coloneq \mathscr{F}(\widetilde{v})$. Due to the definition of $X_{\delta}$, the $L^{\infty}([0,1];H^3(\mathbb{T}^2;\mathbb{R}^2))$ norm of $\widetilde{v}$ is
	bounded by a constant depending only on $\overline y$ and on $\delta$. 
	Moreover, the associated function $\widetilde{w}_h$, arising from \eqref{equation:aux_vorticity_h} in the construction of~$\mathscr{F}$, admits the representation
	\[
	\widetilde{w}_h(t, x) = \int_0^t (\nabla\wedge h + \widetilde{\xi}_h)(\Phi^{\widetilde{v}}(x;t,s), s) \, \diff{s}
	\]
	for all $(t,x) \in [0,1]\times\mathbb{T}^2$. This implies, with the help of the continuous Sobolev embedding $H^2(\mathbb{T}^2)\hookrightarrow L^{\infty}(\mathbb{T}^2)$, the rough estimate
	\begin{equation*}
		\|\widetilde{w}_h(t, \cdot)\|_2 \leq C\|h\| _{L^2([0,1]; H^5(\mathbb{T}^2;\mathbb{R}^2))}\sup\limits_{t\in[0,1]} \|\widetilde{v}(t, \cdot)\|_3 
		\leq C\|h\| _{L^2([0,1]; H^5(\mathbb{T}^2;\mathbb{R}^2))}.
	\end{equation*}
	Therefore, in view of~\eqref{equation:auxilliary_vorticity_combined} and \Cref{remark:streamfunctionv}, it follows that
	\begin{equation*}
		\begin{aligned}
			\sup_{t \in [0,1]}\|v(t, \cdot) - \overline{y}(t)\|_{3} & \leqslant C \left( \sup_{t \in [0,1]} \| \nabla \wedge v(t,\cdot)\|_2 + \|v_0\|_3 \right) \\
			& \leqslant C \left( \sup_{t \in [0,1]} \sum_{i=1}^M \| w_i(t,\cdot)\|_2 + \|v_0\|_3 + \|h\|_{L^2([0,1]; H^5(\mathbb{T}^2;\mathbb{R}^2))} \right).
		\end{aligned}
	\end{equation*}
	The basic estimates for \eqref{equation:aux_vorticity} yield $\|w_i(t,\cdot)\|_2 \leqslant C \|v_0\|_{3}$. This shows that $v \in X_{\delta}$ whenever $\|v_0\|_3 +          \|h\|_{L^2([0,1];H^5(\mathbb{T}^2;\mathbb{R}^2))}$ is sufficiently small. Finally, we fix $\delta_0 > 0$ in~\Cref{theorem:EulerExactLocal} so that $\mathscr{F}(\widetilde{v}) \in X_{\delta}$ for each element~$\widetilde{v} \in X_{\delta}$.

	\paragraph{Contraction property.}
	We fix arbitrary elements $\widetilde{v}^a, \widetilde{v}^b \in X_{\delta}$ and follow the steps of the construction of~$\mathscr{F}$ from \Cref{subsubsection:construction_F} in order to define
	\begin{gather*}
		v^a \coloneq \mathscr{F}(\widetilde{v}^a), \quad v^b \coloneq \mathscr{F}(\widetilde{v}^b), \quad \widetilde{v} \coloneq \widetilde{v}^a - \widetilde{v}^b,\\
		v \coloneq v^a - v^b, \quad w^a \coloneq \nabla \wedge v^a, \quad w^b \coloneq \nabla \wedge v^b, \quad \zeta \coloneq w^a - w^b.
	\end{gather*}
	Moreover, for all $(t,x) \in [0,1]\times\mathbb{T}^2$ it holds $\zeta(t,x) = \sum_{i=1}^M \beta_i(t) \zeta_i(t, x)$,
	where we denote the differences $\{ \zeta_i \}_{i\in \{1,\dots, M\}} \coloneq \{w^a_i - w^b_i\}_{i\in \{1,\dots, M\}}$ for families $\{ w^a_i \}_{i\in \{1,\dots, M\}}$ and $\{ w^b_i \}_{i\in \{1,\dots, M\}}$ of solutions to problems of the form \eqref{equation:aux_vorticity} that are associated with
	$w^a$ and $w^b$, respectively. 
	
	In particular, for each $i\in \{1,\dots, M\}$, the function $\zeta_i$ solves in $[0,1]\times\mathbb{T}^2$ the problem
	\begin{gather*}
		\partial_t \zeta_i + (\widetilde{v} \cdot \nabla) \zeta_i + (\widetilde{v} \cdot \nabla) w^b_i + (\widetilde{v}^b \cdot \nabla) \zeta_i = 0, \quad
		\zeta_i(0,\cdot) = 0.
	\end{gather*}
	Furthermore, noting that $\widetilde{v}$ is average-free, for each $t \in [0,1]$ and $i \in \{1, \dots, M\}$, one has the estimate
	\begin{equation*}
		\|\zeta_i(t,\cdot)\|_{L^2}^2 \leqslant C \left( \int_0^t \|\zeta_i(s,\cdot)\|_{L^2}^2 \, \diff{s} +  \delta_0 \int_0^t \|\widetilde{v}(s,\cdot)\|_1^2 \, \diff{s} \right).
	\end{equation*}
	Hence, resorting to Gr\"onwall's inequality, one can infer that
	\[
	\sup_{t \in [0,1]} \|\mathscr{F}(\widetilde{v}^a)(t,\cdot) - \mathscr{F}(\widetilde{v}^b)(t,\cdot)\|_1^2 \leqslant C \delta_0 \sup_{t \in [0,1]} \| (\widetilde{v}^a-\widetilde{v}^b)(t,\cdot)\|_1^2.
	\]
	As a result, we can further reduce $\delta_0$ to ensure that $\mathscr{F}$ forms a contraction in the space $X_{\delta}$ with respect to the norm of $C([0,1];H^1(\mathbb{T}^2;\mathbb{R}^2))$.
\end{proof}

\subsubsection{Controlled solution}\label{subsubsection:controlledsolution}
Let~$V$ be the fixed point of~$\mathscr{F}$ provided by \Cref{proposition:fixedpoint}. From the construction of $\mathscr{F}$ in \Cref{subsubsection:construction_F}, it follows that $V$ solves in $[0,1]\times\mathbb{T}^2$ the vorticity problem
\begin{gather*}
	\partial_t W + (V \cdot \nabla) W = \nabla \wedge h + \xi, \\ \nabla \wedge V = W, \quad \operatorname{div} V = 0, \quad
	\int_{\mathbb{T}^2} V(x, \cdot) \, \diff{x} = \overline{y}(\cdot) + \kappa(\cdot) \int_{\mathbb{T}^2} v_0(x) \, \diff{x},\\
	W(0,\cdot) = \nabla \wedge v_0,
\end{gather*}
where $\xi \in C([0,1]; L^2(\mathbb{T}^2))\cap L^{\infty}([0,T]; H^2(\mathbb{T}^2))$ is obtained via \eqref{equation:auxilliary_vorticity_combined}. As the function $(t,x) \mapsto \beta_i'(t) w_i(t,x)$ from \eqref{equation:aux_vorticity} is for each $i \in \{1,\dots,M\}$ supported in $[0,1]\times\widetilde{\omegaup}$, the formula \eqref{equation:F} in the proof of
\Cref{lemma:integrating_control} provides a velocity control
\begin{gather*}
	U \in C([0,1]; L^2(\mathbb{T}^2; \mathbb{R}^2))\cap L^{\infty}([0,T]; H^2(\mathbb{T}^2; \mathbb{R}^2)),\\
	\nabla \wedge U = \xi, \quad \operatorname{supp}(U) \subset [0,1]\times\widetilde{\omegaup}.
\end{gather*}
Now, we define the function $c \in L^2([0,1]; \mathbb{R}^2)$ by setting
\begin{align*}
	c(t) &\coloneq \int_{\mathbb{T}^2} (\partial_t V(t,x) + (V(t,x) \cdot \nabla) V(t,x) - h(t,x) - U(t,x)) \, \diff{x} \\
	& = \int_{\mathbb{T}^2} (\partial_t V(t,x) - h(t,x) - U(t,x)) \, \diff{x} 
\end{align*}
for $t \in [0,1]$. Because of the assumption that $\mathbb{T}^2\setminus \omegaup$ is simply-connected, there exist $\Lambda, \Sigma \in C^{\infty}(\mathbb{T}^2;\mathbb{R}^2)$ satisfying $\nabla \wedge \Lambda = \nabla \wedge \Sigma = 0$ in $\mathbb{T}^2$ and (for instance, see Theorem~A.1 in~\cite{NersesyanRissel2024})
\begin{gather*}
	\operatorname{supp}(\Lambda) \cup \operatorname{supp}(\Sigma) \subset \omegaup, \quad
	\operatorname{span}_{\mathbb{R}} \left\{ \int_{\mathbb{T}^2} \Lambda(x) \, \diff{x}, \int_{\mathbb{T}^2} \Sigma(x) \, \diff{x} \right\} = \mathbb{R}^2.
\end{gather*}
Thus, one can write $c(t) = c_1(t) \smallint_{\mathbb{T}^2} \Lambda(x) \, \diff{x} + c_2(t) \smallint_{\mathbb{T}^2} \Sigma(x) \, \diff{x}$ for functions $c_1, c_2 \in L^2([0,1])$ and  $t\in[0,1]$.
As a result, for $u \coloneq U + c_1 \Lambda + c_2\Sigma$ it follows that
\begin{gather*}
	\nabla\wedge(\partial_t V + (V \cdot \nabla) V - h - u) = 0,\\
	\int_{\mathbb{T}^2} (\partial_t V(t,x) + (V(t,x) \cdot \nabla) V(t,x) - h(t,x) - U(t,x)- u(t,x)) \,  \diff{x} = 0.
\end{gather*}
In view of the Helmholtz decomposition theorem, this implies that $V$ satisfies \eqref{equation:controlled_euler} with the control $u$.
Since  $\kappa(0) = 1$ and $\overline{y}(0) = \overline{y}(1) = \kappa(1) = 0$, it holds $V(0,\cdot) = v_0$ and $V(1,\cdot) = 0$.

\section{Proofs of the main results}\label{sec_proofs_main}

This section is devoted to the proofs of our main results in Theorem \ref{th_rel_new} and Theorem \ref{th_con_local}.

\begin{proof}[Proof of Theorem \ref{th_rel_new}]
	Let $\tilde{v}\in C^\infty(\mathbb{T}^2; \mathbb{R}^2)$ be a relaxation enhancing field, in the sense of Definition \ref{def_enhancement}.
	We refer to Constantin et al.~\cite{constantin2008diffusion}*{Section 6} for the existence of such fields.
	Moreover, let $a>0$ be such that the solution $\phi_a$ of \eqref{prima_advectie} and  \eqref{CI_prima_prima}, with $\|f\|_{L^2(\mathbb{T}^2)}\leqslant 1$ 
	and $\int_{\mathbb{T}^2} f(x)\, \diff{x}=0$,  satisfies
	\begin{equation}\label{decadere_mare_dubla}
		\left\|\phi_a \left(\tau,\cdot\right)\right\|_{L^2(\mathbb{T}^2)} < \frac{\delta}{3}.
	\end{equation}
	Let $\tilde f\in H^{4}$ be such that $\|\tilde f\|_{L^2(\mathbb{T}^2)}\leqslant 1$, $\int_{\mathbb{T}^2} \tilde f(x)\, \diff{x}=0$, and
	\begin{equation}\label{date_aproape}
		\|f-\tilde f\|_{L^2(\mathbb{T}^2)}<\frac{2\delta}{9}.
	\end{equation}
	Denoting ${\tilde \phi}_a$ the solution of \eqref{prima_advectie} and \eqref{CI_prima_prima} with $f$ replaced by $\tilde f$
	it follows that
	\begin{equation}\label{E:1.3_new_new}
		\| {\tilde \phi}_a (\tau,\cdot) - \phi_a(\tau,\cdot)\|_{L^2(\mathbb{T}^2)} < \frac{2\delta}{9}.
	\end{equation}
	Next, for any integer $n\geq 1$, let $\theta_n \colon J_{\tau}=[0,\tau]\to [0,1]$ be a smooth non-negative function satisfying $\theta_n(0)=0$ and $\theta_n(t)=1$ for $1/n \leqslant t\leqslant \tau$. Then set
	$$
	\psi_n(t,x)\coloneq (1-\theta_n(t))v_0(x)+\theta_n(t)a\tilde{v}(x).
	$$
	The function $\psi_n$ equals to $v_0$ at $t=0$, reaches $a\tilde{v}$ at $t=1/n$, and stays there for the remaining time. Now let $\tilde{\varphi}_{\psi_n}$ be the solution of~\eqref{a_doua_advectie} and \eqref{CI_prima}, with the velocity field $\psi_n$ instead of $v$ and the initial data $\tilde f$ instead of $f$. We can then apply Theorem~\ref{T.2.1} with $\psi=\psi_n$ to find a control $u\in C^\infty(J_\tau;\mathbb{R}^m)$ such that the solution $v$ of \eqref{equation:euler_finitedimcontrol} satisfies
	\begin{equation}\label{missing}
		\|\tilde\varphi_v-\tilde{\varphi}_{\psi_n}\|_{C(J_\tau;H^2)} < \frac{\delta}{9} ,
	\end{equation}
	where $\tilde\varphi_v$ is the solution of~\eqref{a_doua_advectie} and \eqref{CI_prima}, with the initial data $\tilde f$ instead of $f$. Noting that $\| \psi_n \|_{C(J_{\tau};H_{\sigma}^{4})}\leqslant \| v_0 \|_{4}+a \| \tilde{v} \|_{4}$, we can apply Theorem~\ref{L:1} with $R=\| \tilde{f} \|_{4}+\| v_0 \|_{4}+2a\| \tilde{v} \|_{4}$ to obtain
	$$
	\|\tilde{\varphi}_{\psi_n}-\tilde{\phi}_a\|_{C(J_\tau;H^2)} \leqslant C||| \psi_n-a\tilde{v} |||_{\tau,2}^{1/2}\leqslant C\left( \frac{\| v_0-a\tilde{v} \|_2}{n} \right)^{1/2}
	$$
	for some constant $C=C(R,\tau)>0$. Taking now $n$ large enough, it follows that
	\begin{equation}\label{missing2}
		\|\tilde{\varphi}_{\psi_n}-\tilde{\phi}_a\|_{C(J_\tau;H^2)}<\frac{\delta}{9}.
	\end{equation}
	On the other hand, combining \eqref{date_aproape} and the standard energy estimate for \eqref{a_doua_advectie} and \eqref{CI_prima} (with $f-\tilde f$ instead of $f$),
	it follows that
	$$
	\|\varphi_v(\tau,\cdot)-\tilde\varphi_v(\tau,\cdot)\|_{L^2(\mathbb{T}^2)} < \frac{2\delta}{9},
	$$
	where $\varphi_v$ is the solution of~\eqref{a_doua_advectie} and \eqref{CI_prima}. 
	The last estimate, together with \eqref{E:1.3_new_new}--\eqref{missing2} implies that
	$$
	\|\varphi_v(\tau,\cdot)-\phi_a(\tau,\cdot)\|_{L^2(\mathbb{T}^2)} < \frac{2\delta}{3}.
	$$
	Combining the above estimate and \eqref{decadere_mare_dubla} yields the conclusion \eqref{pana_aici_2}, which ends the proof.
\end{proof}

The remaining part of this section is devoted to the proof of Theorem \ref{th_con_local}. An essential ingredient of this proof is an abstract theorem first established in \cite{constantin2008diffusion}. We use the following version of that result corresponding to Theorem 3.3 in~\cite{wei2021diffusion}.

\begin{theorem}\label{th_wei_abstract}
	Let $X$ be a Hilbert space, and let $\Gamma \colon \mathcal{D}(\Gamma)\to X$ be a strictly positive operator.
	Moreover, let $S \colon \mathcal{D}(S)\to X$ be skew-symmetric on $X$, with $\mathcal{D}(S)=\mathcal{D}(\Gamma^{\frac{1}{2}})$
	and $S\in{\cal L}(\mathcal{D}(\Gamma^\frac12); X)$ .
	Then the
	following two statements are equivalent:\begin{enumerate}
		\item[\rm (i)] For every $\tau,\delta > 0$  there exists $a(\tau, \delta)$ such that for any $a > a(\tau, \delta)$ and any $g\in X$
		with $\|g\|_X = 1$, the solution $\xi_a$ of 
		\begin{equation}\label{foarte_abstracta}
			\dot{\xi_a}=a S \xi_a  - \Gamma \xi_a,\qquad \xi_a(0) = g
		\end{equation}
		satisfies $\|\xi_a(\tau)\|_X < \delta$.
		\item[\rm (ii)] The operator $S$ does not have eigenvectors lying in $\mathcal{D}(\Gamma^\frac12)$.
	\end{enumerate}
\end{theorem}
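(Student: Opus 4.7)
The plan is to prove the two implications separately. The forward direction reduces to a short computation using the eigenvector, while the reverse direction combines the energy identity for \eqref{foarte_abstracta} with a RAGE-type theorem for the unitary group generated by $S$.

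For (i) $\Rightarrow$ (ii), I would argue by contrapositive. Assume that $S$ admits an eigenvector $\phi \in \mathcal{D}(S) = \mathcal{D}(\Gamma^{1/2})$ with $\|\phi\|_X = 1$; by skew-symmetry the corresponding eigenvalue has the form $i\lambda$ with $\lambda \in \mathbb{R}$. Take $g = \phi$ in \eqref{foarte_abstracta}. The standard energy identity
\begin{equation*}
	\|\xi_a(t)\|_X^2 + 2\int_0^t \|\Gamma^{1/2}\xi_a(s)\|_X^2 \, \diff{s} = 1
\end{equation*}
caps the total dissipation by $1/2$. Introduce the phase-adjusted correlation $\beta(t) \coloneqq e^{-ia\lambda t}\langle \xi_a(t), \phi\rangle_X$ and compute $\beta'(t)$ using skew-symmetry of $S$ together with $S\phi = i\lambda\phi$. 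The contribution of the $aS$ term cancels exactly with the derivative of the phase, yielding
\begin{equation*}
	\beta'(t) = -e^{-ia\lambda t}\langle \Gamma^{1/2}\xi_a(t), \Gamma^{1/2}\phi\rangle_X.
\end{equation*}
Integrating and applying Cauchy--Schwarz with the dissipation bound above produces $|1-\beta(t)| \leq \|\Gamma^{1/2}\phi\|_X \sqrt{t/2}$. Hence $\|\xi_a(t)\|_X \geq |\beta(t)| \geq 1/2$ uniformly in $a$ for every $t$ below a fixed threshold $t_0$ depending only on $\|\Gamma^{1/2}\phi\|_X$; this contradicts (i) as soon as $\tau < t_0$ and $\delta < 1/2$.

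For (ii) $\Rightarrow$ (i), I would first promote $S$ to a skew-adjoint generator of a unitary $C_0$-group $\{e^{tS}\}_{t\in\mathbb{R}}$ (skew-symmetry together with $S \in \mathcal{L}(\mathcal{D}(\Gamma^{1/2}); X)$ permits verifying essential skew-adjointness by standard arguments). Under the tacit hypothesis that $\Gamma$ has compact resolvent, the spectral theorem yields an orthonormal eigenbasis $\{e_k\}_{k\in\mathbb{N}}$ of $\Gamma$ with eigenvalues $0 < \lambda_1 \leq \lambda_2 \leq \cdots \to \infty$; let $P_N$ be the orthogonal projection onto $\mathrm{span}\{e_1,\ldots,e_N\}$. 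Since (ii) is equivalent to $iS$ having purely continuous spectrum, the RAGE theorem combined with Wiener's lemma gives, for every $N \in \mathbb{N}$ and $\tau > 0$,
\begin{equation*}
	\sup_{\|g\|_X \leq 1} \int_0^\tau \|P_N e^{asS} g\|_X^2 \, \diff{s} \longrightarrow 0 \quad \text{as } a \to \infty.
\end{equation*}
A Duhamel comparison $\xi_a(t) = e^{atS}g - \int_0^t e^{a(t-s)S}\Gamma\xi_a(s)\,\diff{s}$, interpreted weakly via the duality associated with $\mathcal{D}(\Gamma^{1/2})$, together with the energy bound $\int_0^\tau\|\Gamma^{1/2}\xi_a\|_X^2\,\diff{s}\leq 1/2$, propagates this decay to the dissipative trajectory, so that also $\sup_{\|g\|_X \leq 1}\int_0^\tau\|P_N\xi_a(s)\|_X^2\,\diff{s}\to 0$ as $a\to\infty$. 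Combining the pointwise lower bound $\langle \Gamma\xi_a,\xi_a\rangle_X \geq \lambda_{N+1}(\|\xi_a\|_X^2 - \|P_N\xi_a\|_X^2)$, the energy identity at time $\tau$, and the monotonicity of $t\mapsto \|\xi_a(t)\|_X^2$, one arrives at
\begin{equation*}
	\|\xi_a(\tau)\|_X^2(1 + 2\tau\lambda_{N+1}) \leq 1 + 2\lambda_{N+1}\int_0^\tau \|P_N\xi_a(s)\|_X^2\,\diff{s}.
\end{equation*}
Choosing first $N$ large (depending on $\tau,\delta$) and then $a$ large enough to make the right-hand correction small yields $\|\xi_a(\tau)\|_X < \delta$ uniformly for $\|g\|_X \leq 1$.

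The principal obstacle lies in the reverse direction, and within it the uniform-in-$g$ propagation of the RAGE decay from the unitary trajectory $s\mapsto e^{asS}g$ to the dissipative trajectory $\xi_a$. The Duhamel comparison must be justified in the weak formulation, since $\Gamma\xi_a$ need not lie in $X$, and the error controlled through the integrated quantity $\int_0^\tau\|\Gamma^{1/2}\xi_a\|_X^2\,\diff{s}$ rather than any pointwise bound. The uniformity in $g$ over the unit ball, crucial for securing (i) with $a(\tau,\delta)$ independent of $g$, is in turn obtained by exploiting the finite-rank nature of $P_N$ together with a quantitative form of Wiener's theorem applied to the spectral measures of $iS$ against the finitely many vectors $e_1,\ldots,e_N$.
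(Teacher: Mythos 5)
The paper does not give a proof of Theorem~\ref{th_wei_abstract}: it is imported as stated from Wei~\cite{wei2021diffusion}*{Theorem 3.3}, whose argument in turn follows Constantin, Kiselev, Ryzhik, and Zlato\v{s}~\cite{constantin2008diffusion}. Your (i)$\Rightarrow$(ii) direction is complete and matches those sources: the phase-adjusted correlation $\beta$, the exact cancellation of the $aS$ term by skew-symmetry, and Cauchy--Schwarz against the total dissipation $\int_0^t\|\Gamma^{1/2}\xi_a\|_X^2\,\diff{s}\leq\tfrac12$ give a uniform-in-$a$ lower bound on $\|\xi_a(t)\|_X$ for small $t$. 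You also correctly flag that the reverse direction tacitly requires $\Gamma$ to have compact resolvent (assumed in the cited sources but not explicit in the statement as reproduced here). The concluding chain---spectral gap, energy identity, monotonicity of $\|\xi_a\|_X$ leading to $\|\xi_a(\tau)\|_X^2(1+2\tau\lambda_{N+1})\leq 1+2\lambda_{N+1}\int_0^\tau\|P_N\xi_a\|_X^2\,\diff{s}$---is correct and is the right way to finish once the uniform decay of $\int_0^\tau\|P_N\xi_a\|_X^2\,\diff{s}$ is established.

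That uniform decay is precisely where the gap lies, and the mechanism you propose does not close it. Writing $\xi_a(s)=e^{asS}g+w(s)$ and tracking the error via the Duhamel formula or via $\frac{\diff{{}}}{\diff{s}}\|w\|_X^2=-2\operatorname{Re}\langle\Gamma^{1/2}\xi_a,\Gamma^{1/2}w\rangle_X$ produces terms of the form $\|\Gamma\xi_a\|_X$ or $\|\Gamma^{1/2}e^{asS}g\|_X$, and neither is controlled by $\int_0^\tau\|\Gamma^{1/2}\xi_a\|_X^2\,\diff{s}\leq\tfrac12$: the first involves a full power of $\Gamma$, and the second can be infinite when $g\notin\mathcal{D}(\Gamma^{1/2})$ or grow with $a$, since $e^{tS}$ need not commute with $\Gamma^{1/2}$. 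The cited proofs avoid a single global comparison. They subdivide $[0,\tau]$ into short intervals, on each $[t_j,t_{j+1}]$ compare $\xi_a$ with the unitary evolution $\eta_j(t)=e^{a(t-t_j)S}\xi_a(t_j)$ sharing the same datum at $t_j$, and run a dichotomy at each node: either $\|(I-P_N)\xi_a(t_j)\|_X$ is a definite fraction of $\|\xi_a(t_j)\|_X$, so the spectral gap yields a quantum of dissipation immediately, or the low modes dominate at $t_j$, and then a quantitative RAGE lower bound on $\int_{t_j}^{t_{j+1}}\|\Gamma^{1/2}\eta_j\|_X^2\,\diff{t}$ together with a one-sided comparison inequality forces the dissipation of $\xi_a$ to be comparably large. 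Iterating across the subintervals produces the uniform contraction of $\|\xi_a\|_X$. Your sketch would need to be reorganized along those lines---or supplemented with a genuinely new estimate on the Duhamel error---before (ii)$\Rightarrow$(i) is a proof.
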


We prove Theorem \ref{th_con_local} by applying Theorem \ref{th_wei_abstract}, with an appropriate choice of space $X$ and
operators $\Gamma$ and $S$. For this purpose, we introduce
the shear flow $\tilde v \colon \mathbb{T}^2\to \mathbb{R}^2$   defined by
\begin{equation}\label{def_shear}
	\tilde v(x_1,x_2)= [\alpha(x_2), 0]^{\top} \qquad \left( [x_1, x_2]^{\top} \in \mathbb{T}^2\right),
\end{equation}
where $\alpha \colon \mathbb{T}\to \mathbb{R}$.
Shear flows have been intensively considered in the context of enhanced dissipation; see, for 
instance, Bedrossian and Coti Zelati~\cite{bedrossian2017enhanced} and the references therein. For our purposes, it suffices here to note that 
we have the following result: 

\begin{lemma}\label{lema_veche}
	Let $\tilde v$ be defined by \eqref{def_shear}, and assume that $\alpha\in H^2(\mathbb{T})$ is such that $\alpha'$ has at most a finite number of zeros on $\mathbb{T}$. Moreover, let
	$X$ be the Hilbert space defined by \eqref{def_space_x}.
	Then for every $\tau,\delta>0$ there exists $a^*(\tau,\delta)$ such that for any $a\geqslant a^*(\tau,\delta)$ and  $f\in L^2(\mathbb{T}^2)$ with $\|f\|_{L^2(\mathbb{T}^2)}\leqslant 1$  the solution $\phi_a$ of \eqref{prima_advectie} and \eqref{CI_prima_prima} satisfies
	\begin{equation*}
		\|\mathrm{P}_X\phi_a (\tau,\cdot)\|_{L^2(\mathbb{T}^2)}<\delta,
	\end{equation*}
	where $\mathrm{P}_X$ is the orthogonal projection from $L^2(\mathbb{T}^2)$ onto $X$.        
\end{lemma}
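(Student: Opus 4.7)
The strategy is to apply Theorem~\ref{th_wei_abstract} with the Hilbert space $X$ defined in \eqref{def_space_x}, the skew-symmetric operator
\begin{equation*}
S\varphi \coloneqq -\alpha(x_2)\partial_{x_1}\varphi, \qquad \mathcal{D}(S)=H^1(\mathbb{T}^2)\cap X,
\end{equation*}
and $\Gamma\coloneqq -\Delta|_X$ with domain $H^2(\mathbb{T}^2)\cap X$. The first task is to observe that $X$ is invariant under the dynamics \eqref{prima_advectie}. Indeed, since $\alpha=\alpha(x_2)$ depends only on $x_2$, for any smooth $\varphi$ one has $\int_{\mathbb{T}}\alpha(x_2)\partial_{x_1}\varphi\,\diff{x}_1=0$ and $\int_{\mathbb{T}}\Delta\varphi\,\diff{x}_1=\partial_{x_2}^2\int_{\mathbb{T}}\varphi\,\diff{x}_1$, so the orthogonal projector $\mathrm{P}_X$ commutes with both $\tilde v\cdot\nabla$ and $\Delta$. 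Hence $\xi_a\coloneqq \mathrm{P}_X\phi_a$ solves the abstract Cauchy problem \eqref{foarte_abstracta} with initial datum $g=\mathrm{P}_X f$ satisfying $\|g\|_X\leqslant \|f\|_{L^2(\mathbb{T}^2)}\leqslant 1$.

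Next, I would check the abstract hypotheses on $S$ and $\Gamma$. Since every $u\in X$ has $\int_{\mathbb{T}}u(x_1,x_2)\,\diff{x}_1=0$, its Fourier expansion in $x_1$ contains only nonzero frequencies, and the Poincar\'e-type inequality $\langle\Gamma u,u\rangle_X\geqslant \|u\|_X^2$ shows that $\Gamma$ is strictly positive on $X$. The operator $S$ is skew-symmetric on $X$ because $\alpha$ is independent of $x_1$, so integration by parts in $x_1$ gives $\langle Su,v\rangle_X=-\langle u,Sv\rangle_X$. Since $\alpha\in H^2(\mathbb{T})\hookrightarrow L^\infty(\mathbb{T})$, we have $\|Su\|_X\leqslant \|\alpha\|_{L^\infty}\|u\|_{H^1}$, so $S\in\mathcal{L}(\mathcal{D}(\Gamma^{1/2});X)$ as required.

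The main step, and the one that uses the structural assumption on $\alpha$, is to verify condition (ii) of Theorem~\ref{th_wei_abstract}: $S$ has no eigenvectors in $\mathcal{D}(\Gamma^{1/2})$. Suppose $Su=\lambda u$ for some $u\in\mathcal{D}(\Gamma^{1/2})\subset X$ and $\lambda\in\mathbb{C}$. Expanding $u$ in a partial Fourier series in $x_1$, $u(x_1,x_2)=\sum_{k\neq 0}u_k(x_2)e^{ikx_1}$ (the mode $k=0$ is absent since $u\in X$), the eigenvalue equation yields
\begin{equation*}
-ik\alpha(x_2)u_k(x_2)=\lambda u_k(x_2)\qquad (k\in\mathbb{Z}\setminus\{0\}).
\end{equation*}
On the set where $u_k\neq 0$ this forces $\alpha(x_2)=i\lambda/k$. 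Since $\alpha'$ has only finitely many zeros in $\mathbb{T}$, the function $\alpha$ is strictly monotonic between consecutive critical points, so for every constant $c\in\mathbb{R}$ the level set $\{x_2\in\mathbb{T}:\alpha(x_2)=c\}$ is finite and in particular has Lebesgue measure zero. Therefore $u_k=0$ a.e.\ for every $k\neq 0$, whence $u\equiv 0$.

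Having verified (ii), Theorem~\ref{th_wei_abstract} yields statement (i): for every $\tau,\delta>0$ there exists $a^*(\tau,\delta)>0$ such that for $a\geqslant a^*(\tau,\delta)$ and every $g\in X$ with $\|g\|_X=1$, the solution of \eqref{foarte_abstracta} satisfies $\|\xi_a(\tau)\|_X<\delta$. By linearity, the same bound holds for every $g\in X$ with $\|g\|_X\leqslant 1$. Applying this to $g=\mathrm{P}_X f$ and recalling that $\xi_a(\tau)=\mathrm{P}_X\phi_a(\tau,\cdot)$ concludes the proof. The main obstacle is the spectral argument in the third paragraph, where the assumption that $\alpha'$ vanishes only finitely often is precisely what rules out eigenfunctions of $S$ in $\mathcal{D}(\Gamma^{1/2})$.
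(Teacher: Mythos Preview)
Your proof is correct and follows the same overall strategy as the paper: set up $X$, $\Gamma=-\Delta|_X$, and $S=-\alpha(x_2)\partial_{x_1}$, check that $\mathrm{P}_X\phi_a$ solves the abstract equation \eqref{foarte_abstracta}, and verify condition~(ii) of Theorem~\ref{th_wei_abstract}.

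The only notable difference is in the spectral step. The paper treats $\lambda=0$ and $\lambda\neq 0$ separately: for $\lambda=0$ it uses Rolle's theorem to conclude $\partial_{x_1}g=0$ and then the zero-mean-in-$x_1$ condition; for $\lambda\neq 0$ it writes down the explicit exponential solution $g(x_1,x_2)=\exp\bigl(-\lambda(x_1-a_1)/\alpha(x_2)\bigr)g(a_1,x_2)$ on a neighborhood where $\alpha,\alpha'\neq 0$ and uses $2\pi$-periodicity in $x_1$ to force $g=0$ there. Your Fourier-in-$x_1$ argument handles both cases at once via the level-set observation, which is a bit cleaner. One small point to make explicit: if $i\lambda/k\notin\mathbb{R}$ the constraint $\alpha(x_2)=i\lambda/k$ is vacuous since $\alpha$ is real, so $u_k=0$ trivially; your sentence ``for every constant $c\in\mathbb{R}$'' covers only the remaining case.
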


\begin{proof}
	We first note that $X$ is a Hilbert space  endowed with the $L^2(\mathbb{T}^2)$ inner product. Let $\Gamma \colon \mathcal{D}(\Gamma)\to X$
	be the operator defined by  
	\begin{equation*}
		\mathcal{D}(\Gamma)=H^2(\mathbb{T}^2) \cap X,
	\end{equation*}
	\begin{equation*}
		\Gamma g= -\Delta g \qquad (g\in \mathcal{D}(\Gamma)).
	\end{equation*}
	It is easily seen that $\Gamma$ is a strictly positive operator on $X$. 
	Moreover, it is not difficult to check that $\mathcal{D}(\Gamma^\frac12)=H^1( \mathbb{T}^2)\cap X$. 
	We next set
	\begin{equation*}
		S g=-(\tilde v\cdot \nabla g) \qquad \left(g\in \mathcal{D}(S)\coloneqq \mathcal{D}(\Gamma^\frac12)\right). 
	\end{equation*}
	We clearly have that $S\in \mathcal{L}\left(\mathcal{D}(\Gamma^\frac12),X\right)$ is a skew-symmetric (unbounded) 
	operator on $X$.
	We remark that if we denote $\xi_a=\mathrm{P}_X \phi_a$, where $\phi_a$ is the solution of \eqref{prima_advectie} and \eqref{CI_prima_prima}, then 
	$\xi_a$ satisfies \eqref{foarte_abstracta}, with the operators $\Gamma$ and $S$ defined above and~$g=\mathrm{P}_X f$.
	
	In order to apply Proposition \ref{th_wei_abstract} with $X,\Gamma$, and $S$ defined above, we still have to check that $S$ has no eigenvectors in $H^1( \mathbb{T}^2)\cap X$. 
	To this aim, we remark that $Sg=\lambda g$, with $\lambda\in \mathbb{C}$ and $g\in H^1( \mathbb{T}^2)\cap X$, if and only if the equality
	\begin{equation}\label{eq_eig}
		-\alpha (x_2)\frac{\partial g}{\partial x_1}(x_1,x_2)=\lambda g(x_1,x_2)
	\end{equation}
	holds in $L^2(\mathbb{T}^2)$.
	If the above formula holds for $\lambda=0$, then $\partial_{x_1}g=0$ in $L^2(\mathbb{T}^2)$; note that we have $\alpha(x_2)\neq 0$ for all but finitely many $x_2 \in \mathbb{T}$ by the assumption on $\alpha$ and Rolle's theorem. This and the fact that $ \int_{\mathbb{T}} g(x_1,x_2)\, \diff{x}_1=0 $ for a.e.~$x_2\in \mathbb{T}$ imply that $g=0$. 
	
	Suppose now that~\eqref{eq_eig} holds with $\lambda\neq 0$. By the assumption on $\alpha$ combined with Rolle's theorem, except for a finite possibilities of $a_2 \in \mathbb{T}$, we have
	\begin{equation}\label{nedegenerare}
		\alpha(a_2)\neq 0,\qquad \alpha'(a_2)\neq 0.
	\end{equation}
	For such $a_2$, the continuity of $\alpha$ implies that there exists $\varepsilon>0$
	such that  
	\begin{equation}\label{nu_e_zero}
		\alpha(x_2)\neq 0,\qquad \alpha'(x_2)\neq 0\qquad (x_2\in (a_2-\varepsilon,a_2+\varepsilon)).
	\end{equation}
	Using \eqref{eq_eig} and \eqref{nu_e_zero}, we get that for every $x_1, a_1\in \mathbb{T}$ and a.e.~$x_2\in (a_2-\varepsilon, a_2+\varepsilon)$
	$$
	g(x_1,x_2)=\exp\left(-\frac{\lambda}{\alpha(x_2)}(x_1-a_1)\right) g(a_1,x_2) .
	$$
	The above formula, combined with \eqref{nu_e_zero} and the $2\pi$-periodicity of $g$ with respect to $x_1$, clearly implies that for every $a_2\in \mathbb{T}$ satisfying \eqref{nedegenerare} we have
	$$
	g(x_1,x_2)=0 \qquad (x_1\in \mathbb{T},\ x_2\in (a_2-\varepsilon, a_2+\varepsilon)).
	$$
	Since \eqref{nedegenerare} holds for all but a finite number of $a_2\in \mathbb{T}$, it follows that $g=0$.  
	
	In conclusion, the only function $g\in H^1( \mathbb{T}^2)\cap X$ satisfying \eqref{eq_eig} for some $\lambda\in \mathbb{C}$ is $g=0$, so that $S$ has no eigenvectors in $\mathcal{D}(\Gamma^\frac12)$.
	Thus, we can apply Proposition \ref{th_wei_abstract} to complete the proof.
\end{proof}

We are now in a position to prove our second main result.

\begin{proof}[Proof of Theorem \ref{th_con_local}] 
	Let	 $\tilde v$ be defined by \eqref{def_shear}, with $\alpha\in H^3(\mathbb{T})$ having at most a finite number of critical points. According to Lemma \ref{lema_veche} 
	for every $\tau,\delta>0$ there exists  $a>0$ such that for every $f\in L^2(\mathbb{T}^2)$, with $\|f\|_{L^2(\mathbb{T}^2)}\leqslant 1$ the solution $\phi_a=\phi_a[f]$ of \eqref{prima_advectie} and \eqref{CI_prima_prima} satisfies
	\begin{equation}\label{decadere_mare_mai}
		\left\|\mathrm{P}_X\phi_a[f] \left(\frac{\tau}{2},\cdot\right)\right\|_{L^2(\mathbb{T}^2)}<\delta.
	\end{equation}
	With $a$ and $\tilde v$ chosen as above, we know from Theorem \ref{theorem:EulerExactGlobal} that there exists 
	\[
	u \in L^{2}\left(\left[0,\frac{\tau}{2}\right]; H^2(\mathbb{T}^2; \mathbb{R}^2)\right)
	\]
	such that the corresponding solution $v$ of \eqref{equation:controlled_euler}, with $h=0$, satisfies $v\left(\frac{\tau}{2},\cdot\right) = a\tilde{v}$.
	Since the function $a\tilde{v}$ is a time-independent solution of \eqref{equation:controlled_euler} with $u=h=0$, it follows that
	if we extend $u$ to a function in $ L^{\infty}\left(\left[0,\infty\right); H^1(\mathbb{T}^2; \mathbb{R}^2)\right)$ by setting $u(t,\cdot)=0$ for $t>\frac{\tau}{2}$.
	Then the corresponding solution $v$ of \eqref{equation:controlled_euler} with $h=0$ satisfies $v\left(t,\cdot\right) = a\tilde{v}$ for every $t\geqslant \frac{\tau}{2}$.
	Noting that with the choice of $f=\varphi_v\left(\frac{\tau}{2},\cdot\right)$ we have $\left\| f \right\|_{L^2(\mathbb{T}^2)}\leqslant 1$ and $\varphi_v(\tau,\cdot)=\phi_a[f]\left(\frac{\tau}{2},\cdot\right)$, the conclusion follows from~\eqref{decadere_mare_mai}.
\end{proof}

The proof of  Proposition \ref{prop24} can be obtained by a very slight variation of the above proof. Indeed, the only change consists in remarking that $a\tilde v$, with $a$ and $\tilde v$  satisfying the assumptions in  Proposition \ref{prop24}, is a stationary solution of \eqref{prima_curgere} with $u=0$ and  $h$ chosen as in the statement of  Proposition \ref{prop24}.

\subsubsection*{Acknowledgements}

The first author has been supported by JSPS KAKENHI Grant Numbers 22K13938 and 25K07077. The fourth author has been supported by the ANR project  NumOpTes ANR-22-CE46-0005 (2023--2026).
The authors thank Sylvain Ervedoza for the careful reading of the manuscript and for suggesting the improvement of Theorem~\ref{L:1}, together with a simplification of its proof. The authors further thank the NYU-ECNU Institute of Mathematical Sciences at NYU Shanghai, where this work was initiated.

\subsubsection*{Data availability statement}
Data sharing is not applicable to this article as no datasets were generated or analyzed during the current study.

\subsubsection*{Declarations}
The authors declare that they have no conflicts of interest.

\bibliographystyle{alpha}
\bibliography{Mixing}

\end{document}